\def\MRbibitem{\@ifnextchar[\my@lbibitem\my@bibitem}
\def\mybiblabel#1#2{\@biblabel{{\hyperref{http://www.ams.org/mathscinet-getitem?mr=#1}{}{}{#2}}}}
\def\myhyperanchor#1{\Hy@raisedlink{\hyper@anchorstart{cite.#1}\hyper@anchorend}}
\def\my@lbibitem[#1]#2#3#4\par{%
  \item[\mybiblabel{#2}{#1}\myhyperanchor{#3}\hfill]#4%
  \@ifundefined{ifbackrefparscan}{}{\BR@backref{#3}}%
  \if@filesw{\let\protect\noexpand\immediate
    \write\@auxout{\string\bibcite{#3}{#1}}}\fi\ignorespaces%
}
\def\my@bibitem#1#2#3\par{%
  \refstepcounter\@listctr
  \item[\mybiblabel{#1}{\the\value\@listctr}\myhyperanchor{#2}\hfill]#3%
  \@ifundefined{ifbackrefparscan}{}{\BR@backref{#2}}%
  \if@filesw\immediate\write\@auxout
    {\string\bibcite{#2}{\the\value\@listctr}}\fi\ignorespaces%
}
\DeclareFontFamily{U} {MnSymbolA}{}
\DeclareFontShape{U}{MnSymbolA}{m}{n}{
   <-6> MnSymbolA5
   <6-7> MnSymbolA6
   <7-8> MnSymbolA7
   <8-9> MnSymbolA8
   <9-10> MnSymbolA9
   <10-12> MnSymbolA10
   <12-> MnSymbolA12}{}
\DeclareFontShape{U}{MnSymbolA}{b}{n}{
   <-6> MnSymbolA-Bold5
   <6-7> MnSymbolA-Bold6
   <7-8> MnSymbolA-Bold7
   <8-9> MnSymbolA-Bold8
   <9-10> MnSymbolA-Bold9
   <10-12> MnSymbolA-Bold10
   <12-> MnSymbolA-Bold12}{}
\DeclareSymbolFont{MnSyA} {U} {MnSymbolA}{m}{n}
 \DeclareFontFamily{U} {MnSymbolC}{}
\DeclareFontShape{U}{MnSymbolC}{m}{n}{
  <-6> MnSymbolC5
  <6-7> MnSymbolC6
  <7-8> MnSymbolC7
  <8-9> MnSymbolC8
  <9-10> MnSymbolC9
  <10-12> MnSymbolC10
  <12-> MnSymbolC12}{}
\DeclareFontShape{U}{MnSymbolC}{b}{n}{
  <-6> MnSymbolC-Bold5
  <6-7> MnSymbolC-Bold6
  <7-8> MnSymbolC-Bold7
  <8-9> MnSymbolC-Bold8
  <9-10> MnSymbolC-Bold9
  <10-12> MnSymbolC-Bold10
  <12-> MnSymbolC-Bold12}{}
\DeclareSymbolFont{MnSyC} {U} {MnSymbolC}{m}{n}
\DeclareMathSymbol{\top}{\mathord}{MnSyA}{219} 
\DeclareMathSymbol{\plus}{\mathord}{MnSyC}{20} 
\declaretheorem[numberwithin=section]{theorem}
\declaretheorem[sibling=theorem]{lemma}
\declaretheorem[sibling=theorem]{corollary}
\declaretheorem[sibling=theorem]{proposition}
\declaretheorem[sibling=theorem,style=definition]{definition}
\declaretheorem[sibling=theorem,style=remark]{remark}
\newtheorem{claim}{Claim}
\numberwithin{equation}{section}     
\setlist[enumerate,1]{label={\upshape(\alph*)},ref=\alph*}
\setlist[enumerate,2]{label={\upshape(\arabic*)},ref=\arabic*}
\newcommand{\M}{\mathcal{M}}
\newcommand{\R}{\mathbb{R}}
\newcommand{\Z}{\mathbb{Z}}
\newcommand{\N}{\mathbb{N}}
\newcommand{\E}{\mathbb{E}}
\renewcommand{\P}{\mathbb{P}}
\newcommand{\cL}{\mathcal{L}}
\newcommand{\cR}{\mathcal{R}}
\def\eps{\varepsilon}
\def\phi{\varphi}
\def\R{{\mathbb R}}
\def\N{{\mathbb N}}
\def\Z{{\mathbb Z}}
\def\E{{\mathcal E}}
\def\P{{\mathcal P}}
\def\Q{{\mathcal Q}}
\def\F{{\mathcal F}}
\def\D{{\mathcal D}}
\def\M{{\mathcal M}}
\def\T{{\mathcal T}}
\def\supp{\mbox{\rm supp}}
\def\diam{\mbox{\rm diam} }
\def\le{\leqslant}
\def\ge{\geqslant}
\def\e{\epsilon}
\def\F{\mathcal{F}}
\def\M{\mathcal{M}}
\def\G{\mathcal{G}}
\newcommand{\vertiii}[1]{{\left\vert\kern-0.25ex\left\vert\kern-0.25ex\left\vert #1 
    \right\vert\kern-0.25ex\right\vert\kern-0.25ex\right\vert}}
\newcommand{\invertiii}[1]{{\vert\kern-0.25ex\vert\kern-0.25ex\vert #1 
    \vert\kern-0.25ex\vert\kern-0.25ex\vert}}
\begin{document}

\title{Escape of entropy for countable Markov shifts}

\subjclass[2010]{37D35, 37A35}

\keywords{Entropy, countable Markov shifts, escape of mass.}

\begin{thanks}  {GI was partially supported by CONICYT PIA ACT172001 and by Proyecto Fondecyt 1190194.  MT and AV thank UC for their hospitality. AV thanks Paul Apisa for interesting discussions about the set of points that escape on average. Finally, we thank the referee for useful comments and suggestions.}
 \end{thanks}

\author[G.~Iommi]{Godofredo Iommi} \address{Facultad de Matem\'aticas,
Pontificia Universidad Cat\'olica de Chile (PUC), Avenida Vicu\~na Mackenna 4860, Santiago, Chile}
 \email{\href{mailto:giommi@mat.puc.cl}{giommi@mat.puc.cl}} 
\urladdr{\href{http://www.mat.uc.cl/~giommi}{www.mat.uc.cl/$\sim$giommi}}

\author[M.~Todd]{Mike Todd}
\address{Mathematical Institute,
University of St Andrews,
North Haugh,
St Andrews,
KY16 9SS,
Scotland} 
\email{\href{m.todd@st-andrews.ac.uk}{m.todd@st-andrews.ac.uk}}
\urladdr{\href{http://www.mcs.st-and.ac.uk/~miket/}{www.mcs.st-and.ac.uk/$\sim$miket}}

 \author[A.~Velozo]{Anibal Velozo}  \address{Department of Mathematics, Yale University, New Haven, CT 06511, USA.}
\email{\href{anibal.velozo@gmail.com}{anibal.velozo@yale.edu}}
\urladdr{\href{https://gauss.math.yale.edu/~av578/}{https://gauss.math.yale.edu/~av578/}}

\begin{abstract}
In this paper we study ergodic theory of countable Markov shifts. These are dynamical systems defined over non-compact spaces. Our main result relates the escape of mass, the measure theoretic entropy, and the entropy at infinity of the system. This relation has several consequences. For example we obtain that  the entropy map is upper semi-continuous and that the ergodic measures form an entropy dense subset. Our results also provide new proofs of results describing the existence and stability of the measure of maximal entropy. We relate the entropy at infinity with the Hausdorff dimension of the set of recurrent points that escape on average. Of independent interest, we prove a version of  Katok's entropy formula in this non-compact setting.  
\end{abstract}

\maketitle

\section{Introduction}
\label{sec:intro}

Many problems in ergodic theory and dynamical systems involve properties of limits of sequences of invariant probability measures.  If the phase space is compact then the space of invariant probability measures is also compact in the the weak$^*$ topology, which is partly a consequence of convergence in this topology preserving mass.  However, when the phase space is non-compact, the space of invariant probability measures might also be non-compact, and thus mass, as well as other quantities of interest, may escape in the limit.  In this paper, we are principally interested in how the entropy of sequences of measures behaves in this setting.

More specifically, we consider countable Markov shifts (CMS) $(\Sigma,\sigma)$, which in general are not even locally compact.  We discuss the difficulties with the various classical topologies in this context in the next section, where we also give details of the space of invariant sub-probability measures endowed with the so-called cylinder topology, introduced in \cite{iv}. This topology generalises  the  vague topology to a non-locally compact setting (see  Section~\ref{cyl}). If $(\mu_n)_n$ is a sequence of $\sigma$-invariant probability measures that converges in the cylinder topology to the measure $\mu$ then
the total mass $|\mu|:=\mu(\Sigma) \in [0,1]$. In particular, this topology captures the 
escape of mass. Moreover, $\mu$ is an invariant measure and the normalisation $\mu/|\mu|$ is an invariant probability measure (whenever $
\mu$ is not the zero measure).  Denote by $h_{\nu}(\sigma)$ the entropy of the 
invariant probability measure $\nu$ (see Section~\ref{sec:em} for details), and by $\delta_\infty$ the \emph{topological entropy at infinity} of the system (see Definition \ref{def:ent_inf}). Our first main result answers one of the classical 
questions about sequences of measures: how does entropy change in the limit?

\begin{theorem}\label{thm:main} Let $(\Sigma,\sigma)$ be a transitive CMS with finite topological entropy.  Let $(\mu_n)_{n}$ be a sequence of $\sigma$-invariant probability measures converging on cylinders to $\mu$. Then 
\begin{equation} \label{eq:main}
\limsup_{n\to \infty} h_{\mu_n}(\sigma)\le |\mu|h_{\mu/|\mu|}(\sigma)+(1-|\mu|)\delta_\infty.
\end{equation}
If the sequence converges on cylinders to the  zero measure then the right hand side is understood as $\delta_\infty$.
\end{theorem}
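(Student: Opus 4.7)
My plan is to bound $h_{\mu_n}(\sigma)$ by splitting each orbit segment according to whether its states lie in a fixed ``core'' finite subset of the alphabet or in the escaping tail, then to show that the core contribution converges to at most $|\mu|h_{\mu/|\mu|}(\sigma)$ while the escaping contribution is controlled by $(1-|\mu|)\delta_\infty$.

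Concretely, I fix a large integer $N$ and consider the partition $\mathcal{Q}_N=\{[1],\ldots,[N],Y_N\}$ of $\Sigma$, where $Y_N:=\bigsqcup_{i>N}[i]$. Using the non-compact version of Katok's entropy formula established later in the paper, $h_{\mu_n}(\sigma)$ equals the exponential growth rate of the minimal number of $(n,\varepsilon)$-Bowen balls needed to cover a set of $\mu_n$-measure at least $1-\tau$. Writing $a_n:=\mu_n(Y_N)$, Birkhoff's theorem (with the ergodic decomposition) shows that $\mu_n$-typical orbit segments of length $n$ spend approximately $a_n n$ steps in $Y_N$ and $(1-a_n)n$ in the core. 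Labelling such a segment by its $\mathcal{Q}_N$-itinerary, the spanning count factors, up to a subexponential combinatorial cost, as
\begin{equation*}
\#\{\text{itineraries}\}\cdot\#\{\text{completions in the core}\}\cdot\#\{\text{completions in }Y_N\}.
\end{equation*}

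I would then control the three factors separately. The itinerary factor is bounded by $\binom{n}{\lfloor a_n n\rfloor}$, contributing at most the Bernoulli entropy $\log 2$, which disappears after dividing by $n$. For the core completions, convergence on cylinders gives $\mu_n([i])\to\mu([i])$ for every $i\le N$, so the normalised conditional measures $\mu_n(\,\cdot\,\cap(\Sigma\setminus Y_N))/(1-a_n)$ converge to $\mu/|\mu|$ (when $|\mu|>0$) on the compact full shift over $\{1,\dots,N\}$; classical upper semi-continuity of entropy there bounds the growth rate of this factor by $(1-a_\infty)h_{\mu/|\mu|}(\sigma)$, with $a_\infty:=\limsup_n a_n$. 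By the very definition of $\delta_\infty$, completions inside $Y_N$ grow at rate at most $a_\infty\delta_\infty+o_N(1)$. Summing these bounds, taking $\limsup$ in $n$ and then $N\to\infty$ (so that $a_\infty\to 1-|\mu|$ by convergence on cylinders) yields the stated inequality; the case $|\mu|=0$ follows by dropping the core factor.

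The main obstacle is making the factorisation of the spanning count rigorous: typical orbits interleave many excursions between the core and $Y_N$, so one must verify that the combinatorial bookkeeping of these excursions is genuinely subexponential in $n$, and that ``completions in the core'' can really be bounded by separated sets on the compact sub-shift over $\{1,\dots,N\}$ rather than on the original non-compact system. A secondary technical difficulty is that the restriction of $\mu_n$ to $\Sigma\setminus Y_N$ is not $\sigma$-invariant: one has to work either with the normalised conditional measure together with an induced first-return map (tracking the rescalings via Abramov's formula) or with a variant of Katok's formula formulated for invariant sub-probability measures, and in either case check that the iterated limits in $n$ and $N$ may be interchanged.
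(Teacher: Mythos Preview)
Your outline has the right intuition---split orbits into a core part and tail excursions---but two of the steps fail as written.

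The itinerary factor does not disappear. You bound it by $\binom{n}{\lfloor a_n n\rfloor}$ and claim this vanishes after dividing by $n$, but $\frac{1}{n}\log\binom{n}{\lfloor a_n n\rfloor}\to -a_\infty\log a_\infty-(1-a_\infty)\log(1-a_\infty)$, the binary entropy of $a_\infty$, which is positive whenever $0<a_\infty<1$. That extra term would destroy the inequality. The paper avoids the problem by never separating the itinerary from the core completions: it builds a \emph{finite} partition $\beta_{k,N}$ whose atoms already record both the core symbols and the pattern of excursions, and compares $h_{\mu_n}(\sigma)$ with the partition entropy $h_{\mu_n}(\beta_{k,N})$ (Proposition~\ref{prop:ineq}). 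Because $\beta_{k,N}$ is finite, $H_{\mu_n}(\beta_{k,N}^m)$ passes to the limit via convergence on cylinders (Proposition~\ref{prop:atom}), with no appeal to upper semicontinuity on a compact subshift and no need to make the non-invariant conditional measure into something dynamical.

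The bound on tail completions ``by the very definition of $\delta_\infty$'' is also too fast. The quantity $\delta_\infty$ is a double limit in $M$ and $q$; a single excursion of length $\ell$ only obeys $\hat z_\ell\le e^{\ell(\delta_\infty(q)+\varepsilon)}$ for $\ell$ large, and if a typical segment makes many \emph{short} excursions the multiplicative constants pile up and the product is not controlled by $e^{a_n n\,\delta_\infty}$. The paper introduces a scale parameter: excursions shorter than $kN$ are absorbed into the partition atom (Lemma~\ref{lem:A}), while long excursions are few enough---at most $n/(kN)$ of them---that the accumulated boundary cost is $\frac{1}{k}(\hat\delta_\infty(K)+\varepsilon)$, which vanishes as $k\to\infty$ (Proposition~\ref{prop:goodcover}). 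Passing from the auxiliary $\hat\delta_\infty(K)$ back to $\delta_\infty$ is itself nontrivial and uses the $\F$-property together with a specific choice of compact $K$ (Lemma~\ref{lem:Kineq2}). Finally, the paper reduces to ergodic $\mu_n$---needed to invoke Birkhoff and Katok's formula inside the argument---not via the ergodic decomposition but via the weak entropy density of Proposition~\ref{dense}.
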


Since the topological entropy at infinity plays a crucial role in this article, we define it here, leaving details of the background to this to Section~\ref{sec:introh}. The idea is to measure how complicated the dynamics is near infinity. Of course, such a notion only makes sense for dynamical systems defined on non-compact phase spaces. As in the classical entropy theory, we will study two ways of measuring the complexity of the system near infinity, one topological in nature and the other measure theoretic.

\begin{definition} \label{def:ent_inf} 
Let  $(\Sigma, \sigma)$ be a  CMS. Let $M, q \in \N$. For $n\in \N$ let  $z_n(M, q)$ be the number of cylinders of the form $[x_0,\ldots,x_{n+1}]$, where $x_0\le q$, $x_{n+1}\le q$, and 
\begin{equation*}
\# \left\{ i\in\{0,1,\ldots,n+1\}: x_i\le q \right\} \le \frac{n+2}{M}.
\end{equation*}
Define 
\begin{equation*}
\delta_\infty(M,q):=\limsup_{n\to\infty}\frac{1}{n}\log z_n(M, q),
\end{equation*}
and 
\begin{equation*}
\delta_\infty(q):=\liminf_{M\to\infty} \delta_\infty(M,q).
\end{equation*}
The \emph{topological entropy at infinity} of $(\Sigma,\sigma)$ is defined by $\delta_\infty:=\liminf_{q\to\infty}\delta_\infty(q)$. 
\end{definition}

The measure theoretic counterpart is given by:

\begin{definition} \label{def:ent_meas_inf} Let  $(\Sigma, \sigma)$ be a finite entropy CMS. The \emph{measure theoretic entropy at infinity} of  $(\Sigma, \sigma)$ is defined by
\begin{equation}
h_\infty :=\sup_{(\mu_n)_n\to 0}\limsup_{n\to\infty}h_{\mu_n}(\sigma), \label{eq:mte}
\end{equation}
where $(\mu_n)_n\to 0$ means that the sequence $(\mu_n)_n$ converges  on cylinders to the zero measure. 
\end{definition}

Other authors have considered related concepts. Most notably, Buzzi  \cite[Definition 1.13]{b} proposed a notion of entropy at infinity for CMS. His definition  is given in terms of the graph $G$ which defines the CMS $(\Sigma, \sigma)$: \begin{equation*}
b_{\infty}:=\inf_{F} \inf_{\lambda >0} \sup \left\{h_{\mu}(\sigma) : \mu([F]) < \lambda 			\right\},
\end{equation*} 
where $F$ ranges over the finite sub-graphs of $G$ and $[F]:= \left\{ x \in \Sigma : x_0 \in \mathcal{A}_F \right\}$, where $ \mathcal{A}_F$ denotes the symbols appearing as vertex of $F$.  It turns out that Buzzi's notion coincides with ours. Indeed,  our next result states that all these three notions coincide.

\begin{theorem} \label{thm:vpinf} Let $(\Sigma,\sigma)$ be a CMS of finite topological entropy. Then
\begin{equation*}
 \delta_{\infty}= h_\infty  =b_\infty.
\end{equation*}
\end{theorem}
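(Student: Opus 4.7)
The plan is to close a cycle of inequalities, with Theorem~\ref{thm:main} supplying the upper bounds and a Misiurewicz-type construction inside compact sub-SFTs supplying the lower bound. The inequality $h_\infty\le\delta_\infty$ is immediate from Theorem~\ref{thm:main}: for any sequence $(\mu_n)_n$ of $\sigma$-invariant probabilities converging on cylinders to the zero measure, $|\mu|=0$, so \eqref{eq:main} reads $\limsup_n h_{\mu_n}(\sigma)\le\delta_\infty$; supremizing over such sequences (after reducing to a transitive component via ergodic decomposition if needed) yields the bound. The inequality $h_\infty\le b_\infty$ unpacks the definitions: if $\mu_n\to 0$ on cylinders, then for any finite subgraph $F$ and any $\lambda>0$ one has $\mu_n([F])<\lambda$ eventually, so $\limsup_n h_{\mu_n}\le \sup\{h_\mu(\sigma):\mu([F])<\lambda\}$; take the infimum over $F,\lambda$ and then the supremum over sequences.

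For $b_\infty\le h_\infty$ I would diagonalise. Fix an exhaustion $F_k\uparrow G$ and $\lambda_k\downarrow 0$, and for each $k$ pick an invariant probability $\mu_k$ with $\mu_k([F_k])<\lambda_k$ and $h_{\mu_k}(\sigma)\ge\sup\{h_\mu:\mu([F_k])<\lambda_k\}-1/k$. Any fixed cylinder $[a_0,\ldots,a_m]$ is contained in $[F_k]$ once $a_0\in\mathcal{A}_{F_k}$, so $(\mu_k)_k$ converges on cylinders to the zero measure; since $\sup\{h_\mu:\mu([F_k])<\lambda_k\}\searrow b_\infty$ by monotonicity, $\liminf_k h_{\mu_k}(\sigma)\ge b_\infty$, hence $h_\infty\ge b_\infty$.

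The essential step is $\delta_\infty\le h_\infty$. Fix parameters $M,q,n,N$ and let $W^{(N)}_n(M,q)\subset W_n(M,q)$ be the admissible words counted by $z_n(M,q)$ that use only symbols $\le N$; write $z^{(N)}_n(M,q):=|W^{(N)}_n(M,q)|$, which increases to $z_n(M,q)$ as $N\to\infty$. These words live in the compact sub-SFT $\Sigma_N:=\{x\in\Sigma:x_i\le N\text{ for all }i\}$. Pick $x_w\in[w]$ for each $w\in W^{(N)}_n(M,q)$ and form $\nu:=z^{(N)}_n(M,q)^{-1}\sum_w\delta_{x_w}$ and
\[
\mu:=\frac{1}{n+2}\sum_{i=0}^{n+1}\sigma^{i}_{\ast}\nu.
\]
Passing to a weak$^\ast$ subsequential limit of $\mu$ as $n\to\infty$ inside the compact metric space $\Sigma_N$ and applying the standard Misiurewicz entropy estimate with the finite partition $\{[a]:1\le a\le N\}$ produces a $\sigma$-invariant probability $\widetilde\mu_{M,q,N}$ on $\Sigma_N\subset\Sigma$ with $h_{\widetilde\mu_{M,q,N}}(\sigma)\ge\delta^{(N)}_\infty(M,q):=\limsup_n (n+2)^{-1}\log z^{(N)}_n(M,q)$ and $\widetilde\mu_{M,q,N}(\{x:x_0\le q\})\le 1/M$ (the latter from the defining bound $\#\{i:w_i\le q\}\le(n+2)/M$). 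By monotone convergence $\delta^{(N)}_\infty(M,q)\uparrow\delta_\infty(M,q)$ as $N\to\infty$. Diagonalise: pick $q_k,M_k\to\infty$ with $\delta_\infty(M_k,q_k)\to\delta_\infty$ (possible by Definition~\ref{def:ent_inf}) and then $N_k$ so that $\delta^{(N_k)}_\infty(M_k,q_k)\ge\delta_\infty(M_k,q_k)-1/k$. The measures $\widetilde\mu_k:=\widetilde\mu_{M_k,q_k,N_k}$ satisfy $\widetilde\mu_k([a])\le\widetilde\mu_k(\{x_0\le q_k\})\le 1/M_k\to 0$ for every fixed symbol $a$, so they converge on cylinders to the zero measure, while $\liminf_k h_{\widetilde\mu_k}(\sigma)\ge\delta_\infty$; thus $h_\infty\ge\delta_\infty$ and the cycle closes.

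The main obstacle is organising this four-parameter diagonalisation cleanly. The limits must be taken in the order $n\to\infty$ (Misiurewicz inside $\Sigma_N$), then $N\to\infty$ (recovering the full count $z_n(M,q)$), then $q,M\to\infty$ (reaching $\delta_\infty$), while ensuring that the mass constraint $\widetilde\mu_k(\{x_0\le q\})\le 1/M$ passes through each limit. The Misiurewicz step is standard once inside the compact space $\Sigma_N$ with its finite natural partition, but one must verify that the entropy lower bound $h_{\widetilde\mu_{M,q,N}}(\sigma)\ge\delta^{(N)}_\infty(M,q)$ is preserved when $\widetilde\mu_{M,q,N}$ is viewed as a measure on the ambient non-compact $\Sigma$, and that the diagonal extraction can be coordinated so that the Misiurewicz error, the gap $\delta_\infty(M,q)-\delta^{(N)}_\infty(M,q)$, and the gap $\delta_\infty-\delta_\infty(M,q)$ all vanish simultaneously.
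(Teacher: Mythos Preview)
Your treatment of $h_\infty\le\delta_\infty$ and of $b_\infty=h_\infty$ is correct and matches the paper (the latter is Lemma~\ref{lem:cylBuz}).

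The gap is in the step $\delta_\infty\le h_\infty$, specifically the assertion that $\delta^{(N)}_\infty(M,q)\uparrow\delta_\infty(M,q)$ as $N\to\infty$. You have $z^{(N)}_n(M,q)\uparrow z_n(M,q)$ pointwise in $n$, but the interchange $\sup_N\limsup_n=\limsup_n\sup_N$ that you need is not automatic and in general fails. For a fixed $N$ the words counted by $z^{(N)}_n(M,q)$ may have strictly smaller exponential growth rate than $z_n(M,q)$: the words realising the rate $\delta_\infty(M,q)$ are allowed to use symbols that grow without bound as $n\to\infty$ (the finite-entropy hypothesis only gives $z_n(M,q)<\infty$ for each $n$, via the $\F$-property, not a uniform symbol bound across all $n$). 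If $\sup_N\delta^{(N)}_\infty(M,q)<\delta_\infty(M,q)$ your diagonal choice of $N_k$ is impossible and the construction stalls. Letting $N=N(n)$ grow with $n$ in the Misiurewicz step does not help either: the Ces\`aro measures then live on $\Sigma$ rather than on a fixed compact $\Sigma_N$, and any cylinder-topology limit is only a sub-probability (typically the zero measure, since the words spend a $(1-1/M)$-fraction of their time at symbols $>q$ with no upper control), so no invariant probability with the desired entropy emerges from the limit.

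The paper avoids this entirely by replacing the Misiurewicz construction with a pressure argument. It fixes a bounded, strictly negative, locally constant potential $\phi$ depending only on the first coordinate with $\sup_{x\in[n]}|\phi(x)|\to 0$, and proves (Lemma~\ref{lem:testineq}) by a direct comparison of partition functions that $P_G(t\phi)\ge\delta_\infty$ for every $t>0$. The variational principle then supplies $\mu_n\in\M(\Sigma,\sigma)$ with $h_{\mu_n}(\sigma)+n\int\phi\,d\mu_n>\delta_\infty-\tfrac{1}{n}$; since $\phi<0$ this gives $h_{\mu_n}(\sigma)\ge\delta_\infty-\tfrac{1}{n}$, and since $h_{\mu_n}(\sigma)\le h_{top}(\sigma)<\infty$ it forces $\int\phi\,d\mu_n\to 0$, hence $\mu_n\to 0$ on cylinders. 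The point is that the pressure comparison in Lemma~\ref{lem:testineq} works with the full counts $z_n(M,q)$ (encoded via periodic orbits), with no truncation to $\Sigma_N$ and therefore no limit interchange to justify. If you want to salvage a constructive approach along your lines, you would need to produce an invariant probability measure at each finite stage $n$ (for instance by concatenating the words of $W_n(M,q)$ with short connectors, as in the proof of Proposition~\ref{dense}) rather than passing to a weak$^*$ limit inside a fixed compact subshift.
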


The equality $ \delta_{\infty}= h_\infty$ can be understood as a variational principle at infinity.

Einsiedler, Lindenstrauss, Michel and Venkatesh \cite[Lemma 4.4]{elmv} were the first to obtain an inequality similar to \eqref{eq:main}. It appeared in their ergodic theoretic proof of Duke's theorem on equidistribution of closed geodesics on the modular surface. After that, similar results in the context of homogeneous dynamics were obtained in \cite[Theorem 1.2]{ek} and \cite[Theorem A]{ekp}. For different classes of geodesic flows defined on non-compact manifolds of negative sectional curvature related results were obtained in \cite[Theorem 1.2]{irv}  and \cite[Theorem 1.1]{rv}. In this context the most general result was obtained in \cite[Theorems 1.4 and 1.6]{ve} where an inequality like  \eqref{eq:main}
was proved for the geodesic flow defined on an arbitrary complete Riemannian manifolds with pinched negative sectional curvature. The manifolds studied are locally compact, thus the topology considered in the space of invariant measures is the vague topology. A more interesting and subtle point is the quantity playing the role of the entropy at infinity. Due to the geometric nature of the examples studied, the entropy at infinity is related to the critical exponent of the Poincar\'e series associated to the non-compact parts of the space (in the geometrically finite case this reduces to the critical exponent of the parabolic subgroups of the fundamental group). Let us mention that the topological entropy at infinity of the geodesic flow was also studied  by Schapira and Tapie \cite{st} in their work  about the rate of change of the topological entropy under perturbations of the metric.

 A major difference with previous works  is that in the context of CMS the behaviour of  the orbits approaching infinity can be very complicated and that  we do not assume the phase space to be locally compact. These are major difficulties that have to be overcome making the analysis more technical.  As a general principle we follow the method employed in \cite{ve} with appropriate modifications. Loosely speaking the entropy at infinity of the geodesic flow counts geodesics that start and end at a given base point, but do not return near this point for intermediate times.  In our setup the entropy at infinity counts orbits that might return near a base point many times, but the number of returns become negligible on average, which can occur due to the lack of local compactness. 

There are several interesting consequences of Theorem \ref{thm:main}, some of them are discussed in Section \ref{sec:app}. For example, in Theorem \ref{semicont} it is proved that the entropy map is upper semi-continuous for every transitive finite entropy CMS. The continuity properties of the entropy map have been studied for a long time. Major results in the area are that for expansive systems defined on compact metric spaces the entropy map is upper semi-continuous \cite[Theorem 8.2]{wa}. Another fundamental result is that if $f$ is a $C^{\infty}$ diffeomorphism defined on a smooth compact manifold then again the entropy map is upper semi-continuous \cite[Theorem 4.1]{n}. As explained in Remark \ref{rem:nousc}, for infinite entropy CMS the entropy map is not upper semi-continuous. In a recent article we proved \cite[Corollary 1.2]{itv} that if $(\Sigma, \sigma)$ is a finite entropy transitive CMS then the entropy map is upper semi-continuous when restricted to ergodic measures. A complete solution to the problem can be obtained as a consequence of Theorem \ref{thm:main}.  In Section  \ref{sec:app} we also prove that the set of ergodic measures is  `entropy dense' in the space of invariant probability measures. This result not only provides a fine description of the structure of the space of invariant probability measures but also provides an important  tool to study Large Deviations in this setting. 

There is a classification of transitive CMS in terms of their recurrence properties: they can be transient, null recurrent or positive recurrent (see Definition \ref{def:clas} for $\phi=0$). Positive recurrent CMS are precisely those with a measure of maximal entropy. A particularly important role is played by strongly positive recurrent CMS (SPR); which are a sub-class of positive recurrent Markov shifts. The dynamical properties of this class of systems are similar to that of sub-shifts of finite type.  Buzzi gave a characterisation of SPR shifts using $b_\infty$ in \cite[Proposition 6.1]{b}, based on the work of  Gurevich-Zargaryan, Gurevich-Savchenko and Ruette.  We note in Proposition \ref{prechar} that we can now restate this result as saying that $(\Sigma,\sigma)$ is SPR if and only if $\delta_\infty<h_{top}(\sigma)$, where $h_{top}(\sigma)$ is the Gurevich entropy of $(\Sigma,\sigma)$ (for precise definitions see Section \ref{sec:tf}).  In Section \ref{sec:mme} we use Theorem \ref{thm:main} to obtain stability properties of the measure of maximal entropy for SPR CMS (recovering results from \cite{gs}). Similar arguments are used to prove the existence of equilibrium states for potentials in $C_0(\Sigma)$, the space of test functions for the cylinder topology (see Section \ref{sec:eqst}).  To the author's knowledge, this is the first result on the existence of equilibrium states for CMS that goes beyond regular potentials (e.g. with summable variations or the Walters property). Finally, in Theorem \ref{thm:em} we prove that for SPR systems it is possible to bound the amount of mass that escapes the system in terms of the entropy of the measures. Sequences of measures with large entropy can not  lose  much mass. 

The entropy at infinity has yet another important appearance in dynamics: it is related to the Hausdorff dimension of the set of points that escape on average (see  \cite{aaekmu, ekp, kklm, kp}). These are points for which the frequency of visits to every cylinder equals to zero. In particular, no invariant measure is supported on that set. This notion has been studied recently in contexts of homogeneous and Teichm\"uller dynamics. The motivation comes from work of Dani \cite{da} in the mid 1980s who proved that singular matrices are  in  one-to-one correspondence with certain divergent orbits of one parameter diagonal groups on the space of lattices.  In Theorem \ref{thm:onave} we prove that the Hausdorff dimension of the set of recurrent points that escape on average is bounded above by $\delta_{\infty} / \log 2$, where the factor $\log 2$ comes from the metric in the symbolic space.

While our interest in this paper lies in the realm of Markov shifts, to provide context we mention some applications of this theory. Symbolic methods have been used to describe dynamical properties of a variety of systems since the $1898$ work of Hadamard on closed geodesics on surfaces of negative curvature, at the latest. Compact Markov shifts have been used to study uniformly hyperbolic dynamical systems defined on compact spaces, see for example the work of Bowen in  \cite{bo2}. Many deep results can be obtained with this coding. Mostly after the work of Sarig \cite{sa4}, countable Markov partitions have been constructed for a wide range of dynamical systems. This gives a semiconjugacy between a relevant part of the dynamics, albeit not all of it,  and a CMS.  Examples of systems for which Markov partitions have been constructed include positive entropy diffeomorphisms defined on compact manifolds  \cite{b2,ov, sa4} and Sinai and Bunimovich billiards \cite{lm}. Remarkable results have been proved making use of these codings, for example in  \cite[Main Theorem]{bcs} it is shown that a positive entropy $C^{\infty}$ diffeomorphism of a closed surface admits at most finitely many ergodic measures of maximal entropy. Results in this paper apply to all the symbolic codings mentioned above. However, due to topologies possibly not being preserved by the coding, it is not clear that the results pass to the original systems.

In 1980 Katok \cite[Theorem 1.1]{ka}  established a formula for the entropy of an invariant probability measure in analogy to the definition of topological entropy of a dynamical system \cite{bo,d}. This formula is now known as \emph{Katok's entropy formula}. An important assumption in  \cite[Theorem 1.1]{ka} is the compactness of the phase space. In Section \ref{ent} we prove that Katok's entropy formula holds in the non-compact  setting of CMS. We require this formula in the proof of Theorem \ref{thm:main}, but it is also of independent interest.

\section{Preliminaries}

\subsection{Basic definitions for CMS} \label{sec:defcms}

Let $M$ be a $\N\times \N$ matrix  with entries $0$ or $1$. The symbolic space associated to $M$ with alphabet $\N$ is defined by
 \begin{equation*}
 \Sigma:=\left\{ (x_0, x_1, \dots) \in \N^{\N_0}: M(x_i, x_{i+1})=1 \text{ for every } i \in \N_0 \right\},
\end{equation*} 
where $\N_0:=\N \cup \{0\}$. We endow $\N$ with the discrete topology and $\N^{\N_0}$ with the product topology. On $\Sigma$ we consider the induced topology given by the natural inclusion $\Sigma\subset \N^{\N_0}$. We stress that, in general, this is a non-compact space.  
The space $\Sigma$ is locally compact if and only if for every $i \in \N$  we have $\sum_{j \in \N} M(i,j ) <\infty$ (see \cite[Observation 7.2.3]{ki}).

The \emph{shift map} $\sigma:\Sigma \to \Sigma$ is defined by $(\sigma(x))_i=x_{i+1}$, where $x=(x_0, x_1, \dots ) \in \Sigma$. Note that $\sigma$  is a continuous map. The pair $(\Sigma,\sigma)$ is called  a one sided \emph{countable Markov shift} (CMS).  The matrix $M$ can be identified with a directed graph $G$ with no multiple edges (but allowing edges connecting a vertex to itself). 

An \emph{admissible word}  of length $N$ is a string ${\bf w}  =a_0a_1\ldots a_{N-1}$ of letters in the alphabet such that $M(a_i,a_{i+1})=1$, for every $i\in\{0,\ldots,N-2\}$. We use bold letters to denote admissible words. The length of an admissible word ${\bf w}$ is $\ell({\bf w})$. 

A \emph{cylinder} of length $N$ is the set
\begin{equation*}
[a_0,\ldots,a_{N-1}]:= \left\{ x=(x_0,x_1,\ldots)\in \Sigma :  x_i=a_i  \text{ for } 0 \le i \le N-1 \right\}.
\end{equation*} 
If $a_0\ldots a_{N-1}$ is an admissible word then $[a_0,\ldots,a_{N-1}] \neq \emptyset$. We use the notation $C_n(x)$ to denote the cylinder of length $n$ containing $x$.  Since a cylinder can be identified with an admissible word, we also denote the length of a cylinder $C$  by $\ell(C)$. Note that the topology generated by the cylinder sets coincides with that induced by the product topology. 

The space $\Sigma$ is metrisable. Indeed, let $d:\Sigma \times \Sigma \to \R$ be the function defined by
\begin{equation} \label{metric}
d(x,y):=
\begin{cases}
1 & \text{ if } x_0\ne y_0; \\
2^{-k} & \text{ if  } x_i=y_i \text{ for  } i \in \{0, \dots , k-1\} \text{ and } x_k \neq y_k; \\
0 & \text{ if } x=y.
\end{cases}
\end{equation} 
The function $d$ is a metric and it generates the same topology as that of the cylinders sets. Moreover,  the ball $B(x,2^{-N})$ is precisely $C_N(x)$.  Given $\phi:\Sigma\to \R$, we define 
\begin{equation*}
\text{var}_n(\phi):=\sup \left\{|\phi(x)-\phi(y)|: \forall x,y\in \Sigma \text{ such that }d(x,y)\le 2^{-n} \right\}.
\end{equation*}
A function $\phi:\Sigma\to \R$ is said to have \emph{summable variations} if $\sum_{n\ge 2}\text{var}_n(\phi)<\infty$. A function $\phi$ is called \emph{weakly H\"older} if there exist $\theta \in (0,1)$  and a positive constant $O \in \R$  such that $\text{var}_n(\phi)\le O \theta^n$, for every $n\ge 2$.  A weakly H\"older continuous function is H\"older if and only if it is bounded. The $C^0$-norm of $\phi$ is  $\|\phi\|_0:=\sup_{x\in \Sigma}|\phi(x)|$.  We denote by
\begin{equation*}
S_n\phi(x)=\sum_{k=0}^{n-1}\phi(\sigma^k x),
\end{equation*}
the \emph{Birkhoff sum} of $\phi$ at the point $x$. 

We say that $(\Sigma,\sigma)$ is \emph{topologically transitive} if its associated directed graph $G$ is connected. We say that $(\Sigma,\sigma)$ is \emph{topologically mixing} if for each pair $a,b\in \N$, there exists a number $N(a,b)$ such that for every $n\ge N(a,b)$ there is an admissible word of length $n$ connecting $a$ and $b$.  There is a particular class of CMS that will be of interest to us,

\begin{definition} \label{def:F}  
A CMS $(\Sigma, \sigma)$ is said to satisfy the $\F-$\emph{property} if  for every element of the alphabet
$a$ and natural number $n$, there are only finitely many admissible words of length $n$ starting and ending at 
$a$.
\end{definition}

\begin{remark}
A CMS $(\Sigma, \sigma)$ satisfies the $\F-$\emph{property} if and only if there are only finitely many periodic orbits of length $n$ intersecting $[a]$, for every $n\in \N$ and for every $a$ in the alphabet.  Note that every locally compact CMS satisfies the $\F-$\emph{property}. 
\end{remark}

\begin{remark}
Equivalent definitions and properties as those discussed in this section can be given for \emph{two sided CMS}. In this case the acting group is $\Z$.
It turns out that, in general,  thermodynamic formalism for the two sided CMS can be reduced to the one sided case (see \cite[Section 2.3]{sabook}). 
\end{remark}

\subsection{Topologies in the space of invariant measures} \label{sec:topo}

The space of invariant measures can be endowed with different topologies, some of which can account for the escape of mass phenomenon whereas others can not. In this section we not only fix notation for later use, but we also recall definitions and properties of several topologies in the space of measures.  First note that in this article a measure is always a countably additive non-negative Borel measure defined in the symbolic space $\Sigma$. The mass of a measure $\mu$ is defined as $|\mu|:=\mu(\Sigma)$.

Denote by  $\M(\Sigma,\sigma)$ the space of  $\sigma$-invariant probability measures on $\Sigma$ and by $\M_{\le 1}(\Sigma,\sigma)$ the space of $\sigma$-invariant measures on $\Sigma$ with mass in $[0,1]$. In other words, $\M_{\le 1}(\Sigma,\sigma)$ is the space of $\sigma$-invariant sub-probability measures on $\Sigma$. Note that $\M(\Sigma,\sigma)\subset \M_{\le 1}(\Sigma,\sigma)$.  The set of ergodic probability measures is denoted by $\mathcal{E}(\Sigma, \sigma)$. 

\subsubsection{The weak$^*$ topology} \label{weak*}

Denote by  $C_b(\Sigma)$ the space of real valued bounded continuous function on $\Sigma$. A sequence of measures $(\mu_n)_n$ in  $\M(\Sigma,\sigma)$ converges to a measure $\mu$ in the weak$^*$ topology if for every $f \in C_b(\Sigma)$ we have 
\begin{equation*}
\lim_{n \to \infty} \int f d \mu_n = \int f d \mu.
\end{equation*}
Note that since the constant function equal to one belongs to $C_b(\Sigma)$ the measure $\mu$ is also a probability measure.  A basis for this topology is given by the collection of sets of the form
\begin{align}\label{defbasis}
V(f_1,\ldots,f_k,\mu,\e):= \left\{	 \nu \in  \M(\Sigma,\sigma) : \left|\int f_i d \nu - \int f_i d \mu   \right| < \epsilon, \text{ for } i \in \{1, \dots, k\}		\right\},
\end{align}
where  $\mu \in  \M(\Sigma,\sigma)$,  $(f_i)_{i}$ are elements from  $C_b(\Sigma) $ and $\epsilon >0$. Note that in this notion of convergence we can replace the set of test functions (bounded and continuous) by the space of bounded uniformly continuous functions (see \cite[8.3.1 Remark]{bg}). 
Convergence with respect to the weak$^*$ topology can be characterised as follows, see \cite[Theorem 2.1]{bi}.

\begin{proposition}[Portmanteau Theorem] \label{port}
Let $(\mu_n)_n, \mu$ be probability measures on $\Sigma$. The following statements are equivalent. 
\begin{enumerate}
\item The sequence $(\mu_n)_n$ converges to $\mu$ in the weak$^*$ topology.
\item If $O \subset \Sigma$ is an open set, then $\mu(O) \leq  \liminf_{n \to \infty} \mu_n(O)$.
\item If $C \subset \Sigma$ is a closed set, then $\mu(C) \geq  \limsup_{n \to \infty} \mu_n(C)$.
\item  If $A \subset \Sigma$ has $\mu(\partial A)=0$, where $\partial A$ is the boundary of $A$, then $ \lim_{n \to \infty} \mu_n(A)= \mu(A)$.
\end{enumerate}
\end{proposition}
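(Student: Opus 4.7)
The plan is to establish the equivalences via the cycle $(1)\Rightarrow(2)\Leftrightarrow(3)\Rightarrow(4)\Rightarrow(1)$, exploiting throughout that $\Sigma$ is metrisable (with metric $d$ from \eqref{metric}) and that $\mu_n,\mu$ are probability measures. The equivalence $(2)\Leftrightarrow(3)$ is purely formal: for any closed $C$, the complement $O:=\Sigma\setminus C$ is open and $\mu_n(C)=1-\mu_n(O)$, $\mu(C)=1-\mu(O)$, which swaps $\liminf$ with $\limsup$ and converts $\mu(O)\le \liminf_n \mu_n(O)$ into $\mu(C)\ge \limsup_n \mu_n(C)$.

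For $(1)\Rightarrow(2)$, given an open set $O$, I would approximate $\mathbf{1}_O$ from below by the bounded Lipschitz functions
\begin{equation*}
f_k(x):=\min\bigl\{k\,d(x,\Sigma\setminus O),\,1\bigr\}\in C_b(\Sigma),
\end{equation*}
which satisfy $0\le f_k\uparrow \mathbf{1}_O$ pointwise. For fixed $k$, weak$^*$ convergence and $f_k\le \mathbf{1}_O$ give $\int f_k\,d\mu=\lim_n\int f_k\,d\mu_n\le \liminf_n\mu_n(O)$. Taking the supremum over $k$ and invoking monotone convergence yields $\mu(O)\le \liminf_n\mu_n(O)$.

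For $(2)$ and $(3)$ jointly implying $(4)$: the hypothesis $\mu(\partial A)=0$ forces $\mu(A^\circ)=\mu(\overline A)=\mu(A)$, and then applying $(2)$ to the open set $A^\circ$ and $(3)$ to the closed set $\overline A$, together with $A^\circ\subset A\subset \overline A$, produces the chain
\begin{equation*}
\mu(A)=\mu(A^\circ)\le \liminf_n \mu_n(A^\circ)\le \liminf_n \mu_n(A)\le \limsup_n \mu_n(A)\le \limsup_n \mu_n(\overline A)\le \mu(\overline A)=\mu(A),
\end{equation*}
which collapses to $\mu_n(A)\to \mu(A)$.

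The main obstacle is $(4)\Rightarrow(1)$. Given $f\in C_b(\Sigma)$, after translating and rescaling I may assume $0\le f\le M$, so the layer-cake identity gives
\begin{equation*}
\int f\,d\nu = \int_0^M \nu\{f>t\}\,dt \quad \text{for } \nu\in\{\mu,\mu_n\}.
\end{equation*}
For each $t$ one has $\partial\{f>t\}\subset \{f=t\}$, and since $\mu$ is finite the set $D:=\{t\in [0,M]:\mu\{f=t\}>0\}$ is at most countable. Thus for Lebesgue-a.e.\ $t\in[0,M]$ the set $\{f>t\}$ is a $\mu$-continuity set, so $(4)$ gives $\mu_n\{f>t\}\to \mu\{f>t\}$ for a.e.\ $t$; dominated convergence (with dominant $1$) then interchanges the limit with the outer integral, producing $\int f\,d\mu_n\to \int f\,d\mu$. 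The delicate points are the countability of $D$ (which rests on $\mu$ being finite) and the joint measurability required to apply Fubini and dominated convergence to the level sets.
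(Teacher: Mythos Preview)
Your proof is correct and is essentially the standard argument (as in Billingsley). The paper, however, does not actually prove this proposition: it is stated as background and attributed directly to \cite[Theorem~2.1]{bi} without further argument. So there is no in-paper proof to compare against; you have supplied a complete proof where the authors simply cite the literature.
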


Note that the space $\M(\Sigma,\sigma)$  is closed in the weak$^*$ topology (\cite[Theorem 6.10]{wa}). If $\Sigma$ is compact then so is  $\M(\Sigma,\sigma)$   with respect to the weak$^*$ topology (see \cite[Theorem 6.10]{wa}). If $\Sigma$ is not compact then, in general (e.g., whenever the $\F$-property holds), $\M(\Sigma,\sigma)$ is not compact with respect to the weak$^*$ topology. 
Finally,  the space  $\M(\Sigma,\sigma)$ is a convex set whose extreme points are ergodic measures  (see \cite[Theorem 6.10]{wa}). 

\subsubsection{The topology of convergence on cylinders}  \label{cyl}
In this section we recall the definition and properties of the topology of convergence on cylinders. This topology was introduced and studied in \cite{iv} as a way to compactify $\M(\Sigma,\sigma)$  under  suitable assumptions on $\Sigma$.

Let  $(C^n)_n$ be an enumeration of the cylinders of $\Sigma$. Given $\mu, \nu\in \M_{\le 1}(\Sigma,\sigma)$ we define $$\rho(\mu,\nu)=\sum_{n=1}^\infty \frac{1}{2^n}|\mu(C^n)-\nu(C^n)|.$$
It follows from the outer regularity of Borel measures on metric spaces that $\rho(\mu,\nu)=0$, if and only if $\mu=\nu$. Moreover, the function $\rho$ defines a metric on $\M_{\le1}(\Sigma,\sigma)$.  The topology induced by this metric is called the \emph{topology of convergence on cylinders}. We say that a sequence $(\mu_n)_n$   in  $\M_{\le 1}(\Sigma,\sigma)$ \emph{converges on cylinders} to $\mu$ if $$\lim_{n\to\infty}\mu_n(C)=\mu(C),$$ for every cylinder $C\subset \Sigma$.  Of course, $(\mu_n)_n$ converges on cylinders to $\mu$ if and only if $(\mu_n)_n$ converges to $\mu$ in the topology of convergence on cylinders. 
In the next lemma we see that in the case that mass does not escape then weak$^*$ and convergence on cylinders coincide. 

\begin{lemma}{\cite[Lemma 3.17]{iv}}\label{restriction}
\label{equivtop} Let $(\Sigma,\sigma)$ be a CMS,  $\mu$ and $(\mu_n)_n$ be invariant probability measures on $\Sigma$. The following assertions are equivalent.
\begin{enumerate}
\item The sequence $(\mu_n)_n$ converges in the weak$^*$ topology to $\mu$.
\item The sequence $(\mu_n)_n$ converges on cylinders to $\mu$. 
\end{enumerate}
\end{lemma}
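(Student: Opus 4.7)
The plan is to reduce both implications to the Portmanteau theorem (Proposition~\ref{port}), using the fact that cylinders are clopen in $\Sigma$.

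For (1) $\Rightarrow$ (2), I would argue that each cylinder $C=[a_0,\ldots,a_{N-1}]$ is closed as well as open, since its complement in $\Sigma$ is the union of the (open) cylinders $[b_0,\ldots,b_{N-1}]$ with $(b_0,\ldots,b_{N-1})\ne(a_0,\ldots,a_{N-1})$. Hence $\partial C=\emptyset$, so $\mu(\partial C)=0$, and Proposition~\ref{port}(4) immediately yields $\mu_n(C)\to\mu(C)$ for every cylinder $C$.

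For (2) $\Rightarrow$ (1), my plan is to verify condition~(2) of Proposition~\ref{port}, namely the lower semicontinuity inequality $\mu(U)\le\liminf_n\mu_n(U)$ for every open $U\subset\Sigma$, which (for probability measures) is equivalent to weak$^*$ convergence. The key step would be to decompose an arbitrary open $U$ as a countable disjoint union of cylinders: for each $x\in U$ let $m(x)$ be the smallest positive integer with $C_{m(x)}(x)\subset U$ (which exists because cylinders form a neighborhood basis at $x$), and let $\{D_k\}_{k\ge 1}$ enumerate the distinct cylinders of the form $C_{m(x)}(x)$. Since any two cylinders are either nested or disjoint, if $C_{m(x)}(x)$ and $C_{m(y)}(y)$ intersect, nesting together with minimality of $m(\cdot)$ forces $m(x)=m(y)$ and hence equality; so the $D_k$ are pairwise disjoint and $U=\bigsqcup_k D_k$. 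For each fixed $N\in\N$, the hypothesis then gives
\begin{equation*}
\liminf_{n\to\infty}\mu_n(U)\ge \liminf_{n\to\infty}\sum_{k=1}^{N}\mu_n(D_k)=\sum_{k=1}^{N}\mu(D_k),
\end{equation*}
and letting $N\to\infty$ together with countable additivity of $\mu$ yields $\liminf_n\mu_n(U)\ge\mu(U)$. Proposition~\ref{port} then delivers weak$^*$ convergence.

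The only step requiring genuine care is the disjoint cylinder decomposition of an arbitrary open set, which rests on $\Sigma$ being zero-dimensional and second countable with a countable clopen basis. The crucial structural input that makes Portmanteau directly applicable on both sides is that no mass escapes — both $\mu$ and every $\mu_n$ are probability measures — and indeed the analogous equivalence fails for sub-probability measures in $\M_{\le 1}(\Sigma,\sigma)$, which is precisely the reason the cylinder topology needs to be introduced in the non-locally-compact setting.
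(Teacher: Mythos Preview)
The paper does not supply its own proof of this lemma; it is quoted from \cite[Lemma 3.17]{iv} and stated without argument. Your proposal is correct and self-contained: the (1)$\Rightarrow$(2) direction via Proposition~\ref{port}(4) applied to clopen cylinders is the standard move, and for (2)$\Rightarrow$(1) your decomposition of an open set as a countable disjoint union of cylinders (using minimality of $m(x)$ and the nested-or-disjoint property) followed by the Fatou-type bound is exactly the right mechanism. Note that invariance of the measures is not actually used in your argument, so you in fact prove the statement for arbitrary Borel probability measures on $\Sigma$, which is consistent with how \cite{iv} formulates it.
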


Let $\Sigma$ be a locally compact space and $(\mu_n)_n, \mu$ in  $\M_{\le1}(\Sigma,\sigma)$. The sequence $(\mu_n)_n$ converges to  $\mu$ in the \emph{vague topology} if $\lim_{n \to \infty} \int f d \mu_n = \int f d \mu,$ for every function $f$ continuous and of compact support (note that the set of test functions can be replaced by the set of continuous functions vanishing at infinity).  If $(\Sigma, \sigma)$ is locally compact then the topology of convergence on cylinders coincides with the vague topology (see \cite[Lemma 3.18]{iv}). It is important to note that if $\Sigma$ is transitive and  not locally compact then the space of continuous functions of compact support is trivial, so the vague topology is of no use and the topology of convergence on cylinders is a suitable generalisation (see \cite[Remark 3.13]{iv}).

If $C$ is a cylinder of length $m$, denote by \begin{equation*}
 C(\ge n):= \left\{ x \in C : \sigma^m(x)\in \bigcup_{k\ge n}[k]  \right\}. 
\end{equation*} 
For a non-empty set $ A\subset \Sigma$ we define 
\begin{equation*}
var^A(f):=\sup  \left\{   \left|f(x)-f(y) \right| : x, y \in A \right\}.
\end{equation*}
We declare $var^A(f)=0$ if $A$ is the empty set.

\begin{definition}\label{C_0} We say that $f$ belongs to $C_0(\Sigma)$ if the following three conditions hold:
\begin{enumerate}
\item $f$ is uniformly continuous. 
\item $\lim_{n\to\infty}\sup_{x\in [n]}|f(x)|=0.$
\item $\lim_{n\to\infty}var^{C(\ge n)}(f)=0,$ for every cylinder $C\subset \Sigma$. 
\end{enumerate}
In this case we say that $f$ \emph{vanishes at infinity}. 
\end{definition}
The set $C_0(\Sigma)$ is  the space of test functions for the cylinder topology (see \cite[Lemma 3.19]{iv}). In other words, $(\mu_n)_n$ is a sequence in  $\M_{\le 1}(\Sigma,\sigma)$ that converges in the cylinder topology to $\mu\in\M_{\le 1}(\Sigma,\sigma)$ if and only if $\lim_{n\to\infty}\int f d\mu_n=\int fd\mu$, for every $f\in C_0(\Sigma)$. Since cylinder topology generalises the vague topology for non-locally compact CMS,  the space $C_0(\Sigma)$ is the natural substitute to the set of continuous functions that vanish at infinity. 

The following result was proved in \cite[Theorem 1.2]{iv}, and is an important ingredient for many of our applications.

\begin{theorem}\label{compact} Let $(\Sigma,\sigma)$ be a transitive CMS with the $\F-$property. Then $\M_{\le 1}(\Sigma,\sigma)$ is a compact metrisable space with respect to the cylinder topology. Moreover, $\M_{\le1}(\Sigma,\sigma)$ is affine homeomorphic to the Poulsen simplex. \end{theorem}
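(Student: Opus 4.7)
The plan is to prove the two assertions separately.

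Metrisability is immediate from the explicit metric $\rho$. For compactness, I would embed $\M_{\le 1}(\Sigma,\sigma)$ into the Tychonoff-compact product $[0,1]^{\cC}$, where $\cC$ denotes the countable family of cylinders, via $\mu \mapsto (\mu(C))_{C\in\cC}$; this map is a topological embedding, so it suffices to show its image is closed. Given $(\mu_n) \subset \M_{\le 1}(\Sigma,\sigma)$ with $\mu_n(C) \to a_C$ for every $C\in\cC$, the limit $(a_C)_{C\in\cC}$ inherits finite additivity and $\sigma$-invariance at the cylinder level automatically. The delicate step is countable additivity. Writing $[a] = \bigsqcup_b [a,b]$, Fatou only yields $\sum_b a_{[a,b]} \le a_{[a]}$, and the strict-inequality case records escape of mass. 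The $\F$-property enters via the first-return decomposition $[a] = \bigsqcup_{n\ge 1} [a]_n$, where $[a]_n$ is the finite disjoint union of length-$n$ loop cylinders based at $a$; each layer is a finite sum of cylinder values (so limits commute with summation within each layer), and countable additivity in $n$ combined with Poincar\'e recurrence then promotes the limit to a $\sigma$-invariant Borel sub-probability measure $\mu$ satisfying $\mu(C) = a_C$ for every cylinder. I expect this countable-additivity step, and its reliance on the $\F$-property, to be the main technical obstacle.

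For the Poulsen part, I would first identify the extreme points of $\M_{\le 1}(\Sigma,\sigma)$ as exactly $\{0\}\cup\mathcal{E}(\Sigma,\sigma)$: the zero measure is extreme by non-negativity; ergodic probability measures are extreme since any convex decomposition within $\M_{\le 1}(\Sigma,\sigma)$ forces both summands to have mass $1$ before the usual ergodic argument applies; and any $\mu$ with $0<|\mu|<1$ can be written non-trivially as $\frac{1}{2}\bigl((1-\epsilon)\mu + (1+\epsilon)\mu\bigr)$ for $\epsilon>0$ sufficiently small, witnessing non-extremality. Choquet simplicity of $\M_{\le 1}(\Sigma,\sigma)$ then follows from the classical ergodic decomposition on $\M(\Sigma,\sigma)$ combined with the affine bijection $\mu\leftrightarrow(|\mu|,\mu/|\mu|)$ (well-defined on $|\mu|>0$ and extended by continuity to $0$), which realises $\M_{\le 1}(\Sigma,\sigma)$ as a truncated cone on $\M(\Sigma,\sigma)$ and transfers uniqueness of barycentric representations.

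By the Lindenstrauss--Olsen--Sternfeld characterisation of the Poulsen simplex, it remains to verify that the extreme points are dense. Periodic-orbit measures are ergodic probabilities and are weak$^*$-dense in $\M(\Sigma,\sigma)$ by topological transitivity, and Lemma \ref{equivtop} upgrades this to cylinder-density inside $\M(\Sigma,\sigma)$. To reach sub-probability targets $t\nu$ with $t\in[0,1)$, and in particular $0$ itself, I would construct ergodic measures supported on very long periodic orbits that shadow $\nu$ on a prescribed finite block of cylinders for a fraction $t$ of their length and traverse symbols of arbitrarily large index for the remaining fraction; transitivity furnishes such loops, and the corresponding invariant measures converge to $t\nu$ in the cylinder topology. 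Combining density of the extreme points with metrisability, non-triviality, and Choquet simplicity yields the affine homeomorphism with the Poulsen simplex.
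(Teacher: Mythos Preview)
The paper does not prove this theorem; it is quoted from \cite[Theorem 1.2]{iv} and used as a black box. So there is no in-paper proof to compare against, and your proposal should be read as an attempt to reconstruct the argument of \cite{iv}.

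Your outline is structurally sound and hits the right pressure points. The identification of the extreme boundary of $\M_{\le 1}(\Sigma,\sigma)$ as $\{0\}\cup\mathcal{E}(\Sigma,\sigma)$ is correct, and the Lindenstrauss--Olsen--Sternfeld route to the Poulsen simplex via density of periodic (hence ergodic) measures, augmented by long loops spending a prescribed fraction of time on large symbols to reach sub-probability targets, is the natural one. The Choquet-simplex verification you sketch also works once written out: the representing measure of $\mu$ with $|\mu|=t$ is forced to put mass $1-t$ on $\{0\}$ and the remaining mass is pinned down by uniqueness of the ergodic decomposition of $\mu/t$.

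The one place where your sketch does not yet close is exactly where you flag it: countable additivity of the limit functional. Poincar\'e recurrence and Kac's formula apply to each $\mu_n$ individually, giving $\mu_n([a])=\sum_{k\ge 1}\mu_n(\{R_a=k\})$, and the $\F$-property makes each summand a finite sum of cylinder values so that termwise limits exist. But passing the limit through the infinite sum in $k$ requires a uniform tail estimate, and nothing in your sketch supplies one; Fatou only gives the inequality you already noted. You will need an additional argument---for instance, using $\sigma$-invariance of each $\mu_n$ together with the first-return structure to control $\sum_{k>K}\mu_n(\{R_a=k\})$ uniformly in $n$, or alternatively working with the induced system on $[a]$---to rule out the finitely-additive-but-not-countably-additive limits that the paper itself warns can occur when the $\F$-property fails (see the remark following Theorem~\ref{compact} and \cite[Proposition 4.19]{iv}).
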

 
We remark that, as shown in \cite[Proposition 4.19]{iv}, Theorem \ref{compact} is sharp in a strong sense: if $(\Sigma, \sigma)$ does  not have the $\F-$property, then $\M_{\le1}(\Sigma,\sigma)$ is not compact. More precisely, there exists a sequence of periodic measures that converges on cylinders to a finitely additive measure which is not countably additive.

\subsection{Entropy of a measure} \label{sec:em}

In this section we recall the definition of  the entropy of an invariant measure $\mu \in\M(\Sigma,\sigma)$ (see \cite[Chapter 4]{wa} for more details). We take the opportunity to fix the notation that will be used in what follows.  A partition $\beta$ of a probability space  $(\Sigma, \mu)$ is a countable (finite or infinite) collection of pairwise disjoint subsets of $\Sigma$ whose union has full measure.
The \emph{entropy} of the partition $\beta$ is defined by
\begin{equation*}
H_\mu(\beta):= - \sum_{P \in \beta} \mu(P) \log \mu(P),
\end{equation*}
where $0 \log 0 :=0$. 
It is possible that $H_\mu(\beta)=\infty$. Given two partitions $\beta$ and $\beta'$ of $\Sigma$ we define the new partition 
\begin{equation*}
\beta \vee \beta':= \left\{P \cap Q : P \in  \beta , Q \in \beta' \right\}.
\end{equation*}
Let  $\beta$ be a partition of $\Sigma$. We define the partition $\sigma^{-1}\beta:= \left\{ \sigma^{-1}P : P \in \beta \right\}$ and for $n \in \N$ we set  
$\beta^n:=\bigvee_{i=0}^{n-1} \sigma^{-i}\beta$.  Since the measure $\mu$ is $\sigma$-invariant, the sequence $H_{\mu}(\beta^n)$ is sub-additive. 
The \emph{entropy of $\mu$ with respect to} $\beta$ is defined by
\begin{equation*}
h_{\mu}(\beta):=  \lim_{n \to\infty} \frac{1}{n} H_{\mu}(\beta^n).
\end{equation*}
Finally, the \emph{entropy} of $\mu$ is defined by
\begin{equation*}
h_{\mu}(\sigma):= \sup \left\{h_{\mu}(\beta) : \beta\text{ a partition with } H_{\mu}(\beta) < \infty		\right\}.		
\end{equation*}

\begin{remark}
Krengel  \cite[Remark p.166]{kr} observes that since the entropy of a finite invariant measure $\mu$ is usually defined as the entropy of the normalised measure $\mu/ |\mu|$, the linearity (in the standard sense) of the entropy map is destroyed. Following Krengel's line of thought, the number $ |\mu|h_{\mu/|\mu|}(\sigma)$  appearing in Theorem \ref{thm:main} can be understood,  as the entropy of the finite measure $\mu$ (see also \cite[Theorem 8.1]{wa} for example).
\end{remark}

\subsection{Thermodynamic formalism for CMS} \label{sec:tf}

Throughout this section we assume that $(\Sigma, \sigma)$ is  topologically transitive and that
$\phi:\Sigma\to \R$ has summable variations. Let $A \subset \Sigma$ and  $1_{A}(x)$   be the characteristic function of the set $A$.
In this setting we define,
 \begin{equation*}
 Z_n(\phi,a):=\sum_{\sigma^n x=x} e^{S_n \phi(x)}1_{[a]}(x),
\end{equation*} 
where $a\in \N$.  The \emph{Gurevich pressure} of $\phi$ is defined by
\begin{equation*}
 P_G(\phi):=\limsup_{n\to \infty} \frac{1}{n}\log Z_n(\phi,a).
\end{equation*} 
This definition was introduced by Sarig  \cite{sa1}, based on the work of Gurevich \cite{gu2}. We remark that the right hand side in the definition of $P_G(\phi)$ is independent of $a\in \N$, and that if $(\Sigma,\sigma)$ is topologically mixing, then the limsup can be replaced by a limit  (see  \cite[Theorem 1]{sa1} and  \cite[Theorem 4.3]{sabook}). 
 This definition of pressure satisfies          the variational principle (see \cite[Theorem 3]{sa1} and \cite[Theorem 2.10]{ijt}) and can be approximated by the pressure of compact invariant subsets \cite[Theorem 2 and Corollary 1]{sa1}. Indeed, 
\begin{eqnarray*} \label{thm:vp}
P_G(\phi) &=& \sup \left\{P_{\text{top}}(\phi |K) : K \subset \Sigma \text{ compact and }  \sigma^{-1}K=K 	\right\} \\
 &=& \sup_{\mu\in \M(\Sigma,\sigma)} \left\{h_\mu(\sigma)+\int\phi d\mu: \int \phi d\mu>-\infty \right\},
\end{eqnarray*}
where $P_{\text{top}}( \cdot)$ is the classical pressure on compact spaces \cite[Chapter 9]{wa}. A measure  $\mu \in \M(\Sigma, \sigma)$ is an \emph{equilibrium state} for $\phi$ if   $\int \phi d\mu>-\infty$ and
\begin{equation*}
 P_G(\phi)=h_\mu(\sigma)+\int \phi d\mu.
\end{equation*} 

The following function will be of importance in this article.
\begin{definition} \label{def:ret}
Let $A \subset \Sigma$. Denote by  $R_A(x):=1_{A}(x)\inf\{n\ge1:\sigma^n x\in A\}$ the first return time map to the set $A$. In the particular case in which the set $A$ is a cylinder $[a]$ we denote $R_{[a]}(x):= R_a(x)$.
 \end{definition}

Sarig \cite[Section 4.2]{sa1} introduced the following: 
 \begin{equation*}
 Z_n^*(\phi,a):=\sum_{\sigma^n(x)=x} e^{S_n\phi(x)}1_{[R_a=n]}(x),
\end{equation*} 
where $[R_a =n]:=\left\{x \in \Sigma : R_a(x)=n	\right\}$. Extending notions of Markov chains, Sarig \cite{sa1} classified potentials according to its recurrence properties. 

\begin{definition}[Classification of potentials] \label{def:clas} Let $(\Sigma , \sigma)$ be a topologically transitive  CMS and $\phi$ a summable variation potential with finite Gurevich pressure. Define $\lambda=\exp \left( P_G(\phi) \right)$  and fix $a\in \N$. 
\begin{enumerate}
\item If $\sum_{n\ge 1}\lambda^{-n}Z_n(\phi,a)$ diverges we say that $\phi$ is \emph{recurrent}.
\item If $\sum_{n\ge 1}\lambda^{-n}Z_n(\phi,a)$ converges we say that $\phi$ is \emph{transient}.
\item If $\phi$ is recurrent and $\sum_{n\ge 1}n\lambda^{-n}Z^*_n(\phi,a)$ converges we say that $\phi$ is \emph{positive recurrent}.
\item If $\phi$ is recurrent but $\sum_{n\ge 1}n\lambda^{-n}Z^*_n(\phi,a)$ diverges we say that $\phi$ is \emph{null recurrent}.
 \end{enumerate}
\end{definition}

Topological transitivity implies that above definitions do not depend on the choice of the symbol $a$.

\begin{remark}
The classification in Definition \ref{def:clas} is invariant under the addition of coboundaries and constants. That is,  if $\psi:\Sigma \to \R$ is of summable variations and $C \in \R$ we have that: the potential $\phi$ is recurrent (resp. transient) if and only if the potential $\phi + \psi - \psi \circ \sigma + C$   is recurrent (resp. transient). Moreover,
the potential $\phi$ is positive recurrent (resp. null recurrent) if and only if the potential $\phi + \psi - \psi \circ \sigma + C$   is positive recurrent (resp. null recurrent).
\end{remark}

The following result describes existence and  uniqueness of equilibrium states. Parts (\ref{sumvar1}) and  (\ref{sumvar2}) follow from Theorems 1.1 and Theorem 1.2 of \cite{bs}, respectively.

\begin{theorem} \label{clas} Let $(\Sigma , \sigma)$ be a topologically transitive CMS and $\phi$ a summable variation potential with finite Gurevich pressure. Then 
\begin{enumerate}
\item \label{sumvar1} There exists at most one equilibrium state for $\phi$. 
\item \label{sumvar2}  If $\phi$ has an   equilibrium state then $\phi$ is positive recurrent.
\end{enumerate}
\end{theorem}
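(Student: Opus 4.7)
The plan is to use the inducing technique of Sarig. Fix $a \in \mathcal{S}$, let $\sigma_a := \sigma^{R_a}$ act on $[a]$, giving a topological Bernoulli (full) shift $(\Sigma_a, \sigma_a)$ over the countable alphabet of admissible $a$-loops. Define the induced potential
\[
\phi_a(x) := S_{R_a(x)}\phi(x) - P_G(\phi)\,R_a(x).
\]
Summable variations of $\phi$ transfer to summable variations of $\phi_a$ on $\Sigma_a$, and a standard argument identifies $P_G(\phi_a)=0$.

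For part (\ref{sumvar1}), first decompose any equilibrium state into ergodic components and note that each component is itself an equilibrium state (by affinity of $\mu\mapsto h_\mu(\sigma)+\int\phi\,d\mu$ on measures with $\int\phi\,d\mu>-\infty$), so it suffices to prove at most one ergodic equilibrium state exists. If $\mu$ is ergodic with $\mu([a])>0$, the induced measure $\mu_a$ is $\sigma_a$-invariant; combining Abramov's entropy identity $h_\mu(\sigma)=\mu([a])\,h_{\mu_a}(\sigma_a)$ with Kac's formula $\int R_a\,d\mu_a=1/\mu([a])$ and the definition of $\phi_a$ yields
\[
h_\mu(\sigma)+\int\phi\,d\mu - P_G(\phi) \;=\; \mu([a])\left(h_{\mu_a}(\sigma_a)+\int\phi_a\,d\mu_a\right).
\]
Since $P_G(\phi_a)=0$, the variational principle forces the right-hand side to be non-positive, with equality if and only if $\mu_a$ is equilibrium for $\phi_a$. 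Hence $\mu$ is equilibrium for $\phi$ on $\Sigma$ if and only if $\mu_a$ is equilibrium for $\phi_a$ on $\Sigma_a$ and $\int R_a\,d\mu_a<\infty$. On the full shift $(\Sigma_a,\sigma_a)$ the Ruelle--Perron--Frobenius theorem for summable-variations potentials yields at most one equilibrium state, so inverting the induction (and varying $a$ to cover ergodic measures not charging a fixed $[a]$, which is possible by transitivity) gives at most one ergodic equilibrium state for $\phi$.

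For part (\ref{sumvar2}), if $\phi$ admits an equilibrium state, pick an ergodic component $\mu$ and a symbol $a$ with $\mu([a])>0$. By the correspondence above, $\mu_a$ is an equilibrium state for $\phi_a$ on the full shift $\Sigma_a$ with $\int R_a\,d\mu_a = 1/\mu([a]) < \infty$. The Ruelle--Perron--Frobenius theorem provides a conformal measure $m_a$ for $\phi_a$ whose Radon--Nikodym derivative with respect to $\mu_a$ is bounded above and below, hence $\int R_a\,dm_a<\infty$. Using the Gibbs-type estimate $m_a([\mathbf{w}])\asymp e^{\phi_a(x_{\mathbf{w}})}$ for each $a$-loop $\mathbf{w}$ of length $n$, together with the distortion control furnished by summable variations to replace $x_{\mathbf{w}}$ by the periodic point $\mathbf{w}^\infty$, one obtains
\[
\int R_a\,dm_a \;\asymp\; \sum_{n\ge 1} n\, e^{-n P_G(\phi)} Z_n^*(\phi,a),
\]
so finiteness of the left-hand side is exactly positive recurrence of $\phi$.

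The main obstacle is the passage in part (\ref{sumvar2}) from the invariant equilibrium $\mu_a$ to a conformal measure $m_a$ with uniformly comparable density, and the identification of $\int R_a\,dm_a$ with the series $\sum n\lambda^{-n}Z_n^*(\phi,a)$. Both ingredients are nontrivial pieces of Sarig's thermodynamic formalism for countable full shifts, ultimately depending on sharp distortion estimates coming from the summable variations hypothesis.
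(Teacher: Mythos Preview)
The paper does not actually prove this theorem; it simply records, immediately before the statement, that parts (\ref{sumvar1}) and (\ref{sumvar2}) follow from Theorems~1.1 and~1.2 of Buzzi--Sarig \cite{bs}. Your sketch is precisely an outline of the Buzzi--Sarig argument via inducing on a first-return system, so in that sense you are on the same track as what the paper defers to.

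Two points in your outline deserve more care, and are exactly where the real work in \cite{bs} lies. First, the assertion $P_G(\phi_a)=0$ is not automatic: one always has $P_G(\phi_a)\le 0$ from the partition-function identity relating $Z_n(\phi_a)$ on $\Sigma_a$ to the first-return sums $Z_m^*(\phi,a)e^{-mP_G(\phi)}$, but equality can fail for transient $\phi$. For the uniqueness argument this is harmless, since the displayed Abramov--Kac identity already forces $P_G(\phi_a)\ge 0$ once an equilibrium state exists; but it should be stated that way rather than as a standalone fact. Second, the phrase ``varying $a$ to cover ergodic measures not charging a fixed $[a]$'' hides a genuine issue: two distinct ergodic equilibrium states need a \emph{common} symbol $a$ with positive mass before their induced versions can be compared on the same full shift. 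Buzzi--Sarig handle this by showing that any equilibrium state is globally supported (equivalently, is equivalent to the RPF conformal measure), not by choosing different base symbols for different measures. Your final paragraph correctly flags that the conformal-measure comparison and the Gibbs estimate are the substantive inputs; those, together with the full-support statement, are exactly what \cite{bs} supplies.
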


 In this article the potential $\phi=0$ will play a  particularly important role. The \emph{topological entropy} of $(\Sigma,\sigma)$, that we denote by $h_{top}(\sigma)$, is defined as the Gurevich pressure of the potential $\phi=0$, that is  
 $$h_{top}(\sigma):=P_G(0).$$
  We say that $(\Sigma,\sigma)$ is recurrent, transient, null recurrent or positive recurrent according to the corresponding properties of $\phi=0$. If $(\Sigma,\sigma)$ is positive recurrent, then Theorem \ref{clas} implies that $(\Sigma,\sigma)$ admits a unique measure of maximal entropy. This was first  proved by Gurevich \cite{gu1}. 
 
 \begin{remark}
 Note that every finite entropy, transitive CMS satisfies the $\F-$property (see Definition  \ref{def:F}).
  \end{remark}

\subsection{Strongly positive recurrent CMS} Properties of CMS may be significantly different from those of sub-shifts of finite type defined on finite alphabets. 
In this section we describe a special class of CMS with  properties analogous to those of compact sub-shifts. This study is based on  work of Vere-Jones \cite{v1,v2} developed during the 1960s, where he first defined an equivalent class in the setting of stochastic matrices.  Several people have contributed to the understanding of this class, for example,  
Salama \cite{s}, Gurevich and Savchenko \cite{gs},  Sarig \cite{sa3}, Ruette \cite{r}, Boyle, Buzzi and G\'omez \cite{bbg} and  Cyr and Sarig \cite{cs}.  In these works the following quantities, or related ones,  have been defined and studied.

\begin{definition}
Let $(\Sigma, \sigma)$ be topologically transitive CMS and $a \in \N$. Let  
\begin{equation*}
\Delta_\infty ([a]):=\limsup_{n\to \infty} \frac{1}{n}\log Z_n^*(0,a),
\end{equation*}
and 
\begin{equation*}
{\Delta}_\infty:=\inf_{a\in \N} \Delta_\infty([a]).
\end{equation*}
\end{definition}

\begin{remark}
The number $\Delta_\infty ([a])$ can depend on the symbol $a$, see \cite[Remark 2,1]{r}.
\end{remark}

\begin{definition}[Strongly positive recurrent CMS] \label{def:spro} Let $(\Sigma,\sigma)$ be a topologically transitive CMS with finite topological entropy. We say that $(\Sigma,\sigma)$ is \emph{strongly positive recurrent} (SPR) if ${\Delta}_\infty<h_{top}(\sigma)$.
\end{definition}

\begin{remark} \label{rem_conspr} A strongly positive recurrent CMS is positive recurrent. In particular it admits a unique measure of maximal entropy. Moreover, with respect to this measure the system $(\Sigma, \sigma)$ is exponentially recurrent (see \cite[Proposition 2.3]{bbg} for precise statements). The class of strongly positive recurrent CMS was intensively studied by Gurevich and Savchenko in \cite{gs}. Note, however, that in \cite{gs} these are called  \emph{stable-positive recurrent}.  We also remark that there exists CMS that are positive recurrent but not strongly positive recurrent (see \cite[Example 2.9]{r}).  
\end{remark}

 \begin{remark} \label{rem:spr}
 Strongly positive recurrent CMS have the property that the entropy is concentrated inside the system and not near infinity. Indeed, let $(\Sigma, \sigma)$ be a CMS an $G$ its associated graph. Gurevich and Zargaryan \cite{gz} (see also \cite{gs}) showed that a condition equivalent to SPR is the existence of a finite connected subgraph $H \subset G$  such that there are more paths inside than outside $H$ (in term of exponential growth). See \cite[Section 3.1]{r} for precise statements. 
On the other hand, for graphs that are not strongly positive recurrent the entropy is supported by the infinite paths that spend most of the time outside a finite subgraph  (see \cite[Proposition 3.3]{r}).
 \end{remark}

Along the lines of the observations made in Remark \ref{rem:spr}, in the next section (see Proposition \ref{prechar}) we characterise SPR for CMS as those having entropy at infinity strictly smaller than the topological entropy.

Sarig \cite{sa3} generalised the notion of strong positive recurrence to potentials $\phi$.  Using his definition, we recover the topological notion in Definition~\ref{def:spro} for the potential $\phi\equiv 0$, i.e., this potential is strongly positive recurrent if and only if  $(\Sigma,\sigma)$ is SPR (see \cite[Remark 2.11]{r}).
For Sarig's class of potentials the associated thermodynamic formalism enjoys most of the properties of the thermodynamics for H\"older potentials on sub-shifts of finite type. In particular, the transfer operator corresponding to a strongly positive recurrent potential has a spectral gap (see \cite[Theorem 2.1]{cs}). This readily implies that the pressure function is analytic and there exist formulas for its derivatives (\cite[Theorem 3 and 4]{sa3} and  \cite[Theorem 1.1]{cs}), there exists a unique equilibrium state and it has exponential decay of correlations and satisfies the Central Limit Theorem (\cite[Theorem 1.1]{cs}).   Moreover, in the space of potentials  strongly positive recurrence  is a robust property. Indeed, it was proved by Cyr and Sarig \cite[Theorem 2.2]{cs} that the space of strongly positive recurrent potentials is open and dense (with respect to the uniform metric) in the space of weakly H\"older potentials with finite pressure.

\subsection{Entropy at infinity}\label{sec:introh}
A fundamental consequence of Theorem \ref{thm:main}  is that a great deal of dynamical information of the system is captured by its complexity at infinity. As discussed in the introduction, we have defined two different ways of quantifying this complexity. Namely, the topological entropy at infinity (Definition \ref{def:ent_inf}) and the measure theoretic one (Definition \ref{def:ent_meas_inf}). 
In this section we will elaborate on these notions and put our results into context.

We first discuss the topological entropy at infinity of $(\Sigma,\sigma)$, given in Definition \ref{def:ent_inf}.  Observe that if $M_1<M_2$, then $z_n(M_2, q)\le z_n(M_1, q)$, 
 for every $n, q \in \N$, so
\begin{equation*}
\delta_\infty(q)=\inf_M \delta_\infty(M,q)=\lim_{M\to\infty}\delta_\infty(M,q).
\end{equation*}
If $(\Sigma,\sigma)$ is a transitive CMS then for every $a,b\in \N$,
\begin{equation}
\label{eq:deltainfagain}
\delta_\infty(M,q)=\limsup_{n\to\infty}\frac{1}{n}\log z_n(M,q,a,b),
\end{equation}
where $z_n(M,q,a,b)$ is the number of cylinders of the form $[x_0,\ldots,x_{n+1}]$, where $x_0=a$, $x_{n+1}=b$, and 
\begin{equation*}
\# \left\{i\in\{0,\ldots,n+1\}:x_i\le q \right\}\le \frac{n+2}{M}.
\end{equation*}
Note that $q<q'$ implies the inequality $z_n(M,q',a,b)\le z_n(M,q,a,b)$. In particular $(\delta_\infty(M,q))_q$ is decreasing in $q$. We conclude that 
\begin{align}\label{infinf}
\delta_\infty=\inf_{q} \delta_\infty(q)=\inf_{M,q}\delta_\infty(M,q).
\end{align}

Since in our results we will usually  assume that the symbolic space is transitive, we can consider \eqref{infinf} as 
the definition of the topological entropy at infinity.  

We now consider the measure theoretic entropy at infinity, defined for finite entropy CMS as 
\begin{equation*}
h_\infty :=\sup_{(\mu_n)_n\to 0}\limsup_{n\to\infty}h_{\mu_n}(\sigma), 
\end{equation*}
where $(\mu_n)_n\to 0$ means that the sequence $(\mu_n)_n$ converges on cylinders to the zero measure.  Note that the finite entropy assumption, and more generally the $\F-$property, ensures the existence of sequences of measures converging on cylinders to the  zero measure (see \cite[Lemma 4.16]{iv}). In particular,  $h_{\infty}$ is well defined. 
In \cite[Example 4.17]{iv}, an example of a CMS made of infinitely many loops of length two based at a common vertex is considered. The entropy is infinite and there are no sequences of measures converging to zero. Every measure gives weight at least    $1/2$ to the base cylinder.
\label{rmk:finite ent}

 In Section \ref{entinf1} we will prove that both, the topological and the measure theoretic entropies at infinity coincide.  This has several consequences,  in particular we obtain that Theorem \ref{thm:main} is sharp. Indeed, $\delta_{\infty}$ is the smallest number for which inequality \eqref{eq:main} holds.

In the context of CMS the entropy at infinity was already investigated by  Gurevich and Zargaryan \cite{gz}, Ruette \cite{r} and Buzzi \cite{b}. It is important to mention that they also had two flavours of entropy at infinity, a topological and a measure theoretic version. It is proven by Ruette \cite{r} that both notions coincide (for a precise statement see \cite[Proposition 6.1]{b}). It turned out that the notions of entropy at infinity presented in this work coincide with theirs. Recall that if $G$ is the graph which determines $(\Sigma,\sigma)$, then
 \begin{equation*}
b_{\infty}=\inf_{F} \inf_{\lambda >0} \sup \left\{h_{\mu}(\sigma) : \mu([F]) < \lambda 			\right\},
\end{equation*} 
where $F$ ranges over the finite sub-graphs of $G$ and $[F]:= \left\{ x \in \Sigma : x_0 \in \mathcal{A}_F \right\}$, where $ \mathcal{A}_F$ denotes the symbols appearing as vertex of $F$. We first show the relation between $h_\infty$ and $b_\infty$.

\begin{lemma}
For a sequence $(\mu_n)_n$  in $\M(\Sigma, \sigma)$, the following are equivalent:
\begin{itemize}
\item[(a)]  for any collection of cylinders $C^1, \ldots, C^N,$ and $\eps>0$, there is $n_0\in \N$ such that $\mu_n(\bigcup_{i=1}^NC^i)<\eps$ for all $n\ge n_0$;
\item[(b)]  for any finite subgraph $F$ of $G$ and any $\eps>0$, there is $n_1\in \N$ such that  $\mu_n([F])<\eps$ for all $n\ge n_1$.
\end{itemize}
\label{lem:cylBuz}
\end{lemma}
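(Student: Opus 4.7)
The plan is to prove both implications by directly exploiting the fact that $[F]$ is itself a finite union of length-one cylinders, so the conditions are essentially stating the same thing up to refining or coarsening which cylinders one looks at.

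For the implication (a)$\Rightarrow$(b), given a finite subgraph $F$ with vertex set $\mathcal{A}_F=\{a_1,\ldots,a_N\}$, I would simply observe that by definition
\[
[F]=\bigcup_{i=1}^{N}[a_i],
\]
which is a finite union of cylinders. Applying (a) to this particular collection of $N$ cylinders and the given $\eps>0$ immediately produces an $n_1$ beyond which $\mu_n([F])<\eps$.

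For the converse (b)$\Rightarrow$(a), given cylinders $C^1,\ldots,C^N$, each with first coordinate $a^{(i)}\in\mathcal{S}$, I would let $F$ be the finite subgraph whose vertex set is $\mathcal{A}_F:=\{a^{(1)},\ldots,a^{(N)}\}$ (with no edges needed, or with all edges of $G$ between these vertices — either way it is a finite subgraph). Since each $C^i\subset[a^{(i)}]$, we get the monotone inclusion
\[
\bigcup_{i=1}^{N}C^{i}\subset\bigcup_{j=1}^{N}[a^{(j)}]=[F],
\]
and hence $\mu_n\bigl(\bigcup_{i=1}^{N}C^{i}\bigr)\le\mu_n([F])$. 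Applying (b) to this $F$ and the given $\eps$ then yields the required $n_0=n_1$.

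The proof is essentially a one-line set-theoretic observation in each direction; the only mild subtlety is noticing that an arbitrary cylinder $C^i=[a^{(i)}_0,\ldots,a^{(i)}_{k_i-1}]$ of any length is already contained in its length-one truncation $[a^{(i)}_0]$, so tracking longer cylinders gives no additional information beyond their initial symbols when estimating measure. I do not anticipate any genuine obstacle, and no further hypothesis on the sequence $(\mu_n)_n$ (such as invariance or $\F$-property) is required for this equivalence.
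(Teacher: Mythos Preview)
Your proof is correct and follows essentially the same approach as the paper: the paper also observes that (a)$\Rightarrow$(b) is immediate since $[F]$ is a finite union of $1$-cylinders, and for (b)$\Rightarrow$(a) it takes the subgraph determined by the first coordinates of the $C^i$, exactly as you do.
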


An easy consequence of the lemma is that convergence on cylinders in this setting corresponds to the type of limits featuring in the definition of $b_\infty$ and thus $b_\infty=h_\infty$.

\begin{proof}[Proof of Lemma~\ref{lem:cylBuz}]
Since (b) concerns 1-cylinders, the fact that (a) implies (b) is clear.  To prove the reverse implication, we observe that if $C^1, \ldots, C^N$ is a collection of cylinders then we can take the subgraph defined by the first coordinate of each $C^i$ as our subgraph. 
\end{proof}

As previously mentioned, in Section \ref{entinf1} we will prove that $h_\infty=\delta_\infty$. This implies that the entropy at infinity defined in this section coincides with the previously defined one. One consequence is  that, since \cite[Proposition 6.1]{b} implies  $b_{\infty}<h_{top}(\sigma)$ is a characterisation of SPR, we thus have the following alternative characterisation:

\begin{proposition}\label{prechar} A topologically transitive CMS $(\Sigma,\sigma)$ is SPR if and only if $h_{\infty}<h_{top} (\sigma)$ if and only if $\delta_{\infty}<h_{top} (\sigma)$.\end{proposition}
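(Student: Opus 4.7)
The plan is to derive this as an immediate consequence of the identification of the three notions of entropy at infinity together with Buzzi's existing combinatorial characterisation of SPR, all of which are in place by this point of the paper.

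First, recall that Definition \ref{def:spro} stipulates that SPR is only defined for transitive CMS of finite topological entropy, so throughout we may assume $h_{top}(\sigma)<\infty$ and the hypotheses of Theorem \ref{thm:vpinf} are met. The starting point is Buzzi's result \cite[Proposition 6.1]{b} (building on Gurevich--Zargaryan, Gurevich--Savchenko, and Ruette), which asserts that for a transitive finite entropy CMS, the condition $\Delta_\infty < h_{top}(\sigma)$ appearing in Definition \ref{def:spro} is equivalent to $b_\infty < h_{top}(\sigma)$. Thus the statement
\[
(\Sigma,\sigma)\text{ is SPR}\iff b_\infty<h_{top}(\sigma)
\]
is already available to us.

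Next, I would invoke Theorem \ref{thm:vpinf}, which says that for any transitive CMS of finite topological entropy one has
\[
\delta_\infty=h_\infty=b_\infty.
\]
Substituting these equalities into Buzzi's characterisation immediately yields the chain of equivalences
\[
(\Sigma,\sigma)\text{ is SPR}\iff b_\infty<h_{top}(\sigma)\iff h_\infty<h_{top}(\sigma)\iff \delta_\infty<h_{top}(\sigma),
\]
which is exactly the content of Proposition \ref{prechar}.

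There is essentially no obstacle: the only thing to verify is that the citation of Theorem \ref{thm:vpinf} is not circular. Theorem \ref{thm:vpinf} is proved in a later section (the identification $h_\infty=\delta_\infty$ appears in Section \ref{entinf1}, while $h_\infty=b_\infty$ follows from Lemma \ref{lem:cylBuz} already proved above), and none of those proofs relies on the SPR characterisation, so the logical order is consistent. One could also note for completeness that the finite entropy hypothesis built into Definition \ref{def:spro} is not a loss of generality here, since the definitions of $\delta_\infty$, $h_\infty$ and $b_\infty$ all give quantities bounded above by $h_{top}(\sigma)$, so the three strict inequalities are vacuously false (or vacuously true, depending on convention) when $h_{top}(\sigma)=\infty$ and SPR is undefined.
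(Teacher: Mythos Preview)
Your proposal is correct and matches the paper's own argument essentially verbatim: the paper states Proposition~\ref{prechar} as an immediate consequence of Buzzi's characterisation \cite[Proposition 6.1]{b} (SPR $\iff b_\infty<h_{top}(\sigma)$) together with the equalities $\delta_\infty=h_\infty=b_\infty$ from Theorem~\ref{thm:vpinf} and Lemma~\ref{lem:cylBuz}. Your remarks on non-circularity and the finite entropy hypothesis are accurate and add useful clarification beyond what the paper makes explicit.
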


This result is consistent with the comments in Remark \ref{rem:spr}. Indeed, SPR systems are those for which the entropy is not concentrated at infinity; the inequality $\delta_{\infty}<h_{top} (\sigma)$ has a wealth of dynamical consequences (see Remark \ref{rem_conspr}).

From a slightly different point of view, it was not realised until recently that the entropy at infinity has a particularly important role in the regularity of the entropy map. In the context of homogeneous dynamics, for the diagonal action on $G/\Gamma$, where $G$ is a $\R$-rank 1 semisimple Lie group with finite centre and $\Gamma\leqslant G$ a lattice, a formula like Theorem \ref{thm:main} was obtained in \cite{ekp}. In that context the constant playing the role of the entropy at infinity is half the topological entropy of the flow. It was later proved in \cite{kp} that half the topological entropy is in fact sharp and equal to the measure theoretic entropy at infinity in that setup. The method employed in \cite{ekp} was used in \cite{rv} to prove that a similar result holds for the geodesic flow on a geometrically finite manifold. Unfortunately, an obstruction to run the method from  \cite{ekp} is the existence of periodic orbits that escape to infinity. This issue was overcome in \cite{ve}, where the results in \cite{rv} where generalised to all complete negatively curved manifolds. For CMS the existence of periodic orbits that escape phase space is quite common so our approach is similar to the one in \cite{ve}. Additional complications arise from the possible lack of locally compactness of $\Sigma$. In Section \ref{mainine} and Section \ref{finalproof} we will address these issues and prove Theorem \ref{thm:main}. 

The entropy at infinity has further applications to suspension flows, entropy density, the dimension of points which escape on average, existence of equilibrium states and bounds on mass escape, all of which we give in Section~\ref{sec:app}.

\section{Katok's entropy formula } \label{ent}

In the early 1970s Bowen \cite{bo} and Dinaburg \cite{d} provided a new definition of topological entropy of a dynamical system. Inspired by these results, Katok  \cite{ka} established a formula  for the measure theoretic entropy in analogy  to the definition of topological entropy by Bowen and Dinaburg. We now recall his result in a particular context. 

Let $(\Sigma, \sigma)$ be a CMS and  let $d$ be the metric on $\Sigma$ defined in \eqref{metric}. The dynamical metric $d_n$ is defined by the formula
\begin{equation*}
d_n(x,y):=\max_{k\in \{0,\ldots,n-1\}}d(\sigma^kx,\sigma^ky).
\end{equation*}
The open ball of radius $r$ centred at $x$ with respect to the metric $d_n$ is denoted by $B_n(x,r)$. By the definition of the metric $d$ we know that $B_n(x, 2^{-N})=C_{n+N}(x)$. A ball of the form $B_n(x,r)$ is called a $(n,r)$-dynamical ball. The following result is a particular case of a theorem proved in \cite[Theorem 1.1]{ka}.

\begin{theorem}\label{kat} Let $(\Sigma,\sigma)$ be a sub-shift of finite type defined on a finite alphabet  and $\mu$  an ergodic $\sigma$-invariant probability measure. Then 
\begin{equation} \label{eq:kat}
h_\mu(\sigma)=\lim_{\epsilon\to 0} \liminf_{n\to \infty}\dfrac{1}{n}\log N_\mu(n,\epsilon,\delta),
\end{equation}
where $N_\mu(n,\epsilon,\delta)$ is the minimum number of $(n,\epsilon)$-dynamical balls needed to cover a set of $\mu$-measure strictly bigger than $1-\delta$. In particular the limit above does not depend on $\delta\in (0,1)$. 
\end{theorem}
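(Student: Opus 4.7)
The plan is to deduce both inequalities in \eqref{eq:kat} from the Shannon--McMillan--Breiman theorem, exploiting that the metric \eqref{metric} makes dynamical balls coincide with cylinders: $B_n(x, 2^{-N}) = C_{n+N}(x)$ for every $N \in \N$. Thus it is enough to establish \eqref{eq:kat} along the sequence $\epsilon = 2^{-N}$, $N \to \infty$; intermediate values of $\epsilon$ are then sandwiched between consecutive dyadic scales using monotonicity of $N_\mu$ in $\epsilon$. Fix $N \in \N$, $\delta \in (0,1)$, and $\eta > 0$. Since $(\Sigma, \sigma)$ is an SFT on a finite alphabet, the partition into $1$-cylinders is a finite generator, and Shannon--McMillan--Breiman gives $-\frac{1}{m}\log \mu(C_m(x)) \to h_\mu(\sigma)$ for $\mu$-a.e.\ $x$. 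Choose $\alpha < \min(\delta, (1-\delta)/2)$; by Egorov's theorem there exist $m_0 \in \N$ and a set $E$ with $\mu(E) > 1 - \alpha$ on which
\begin{equation*}
e^{-m(h_\mu(\sigma) + \eta)} \le \mu(C_m(x)) \le e^{-m(h_\mu(\sigma) - \eta)}
\end{equation*}
for all $m \ge m_0$.

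For the upper bound, the distinct length-$(n+N)$ cylinders meeting $E$ are pairwise disjoint with total measure at most one, so there are at most $e^{(n+N)(h_\mu(\sigma) + \eta)}$ of them; they are $(n, 2^{-N})$-dynamical balls that cover $E$, which has measure exceeding $1 - \delta$, hence $N_\mu(n, 2^{-N}, \delta) \le e^{(n+N)(h_\mu(\sigma) + \eta)}$ for every $n + N \ge m_0$. For the lower bound, given any cover $\{B_n(x_i, 2^{-N})\}_{i=1}^{K}$ of a set $A$ with $\mu(A) > 1 - \delta$, the inequality $\mu(E^c) < (1-\delta)/2$ implies $\mu(A \cap E) > (1-\delta)/2$; each ball intersecting $E$ coincides with $C_{n+N}(y)$ for some $y \in E$ and so has measure at most $e^{-(n+N)(h_\mu(\sigma) - \eta)}$, and such balls together cover $A \cap E$, forcing
\begin{equation*}
K \ge \tfrac{1-\delta}{2}\, e^{(n+N)(h_\mu(\sigma) - \eta)}.
\end{equation*}

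Dividing by $n$, sending $n \to \infty$ with $N$ fixed, and then letting $\eta \to 0$ yields $\liminf_{n\to\infty}\frac{1}{n}\log N_\mu(n, 2^{-N}, \delta) = h_\mu(\sigma)$ for every $N$ and every $\delta \in (0,1)$. The outer limit $\epsilon \to 0$ then also equals $h_\mu(\sigma)$ and is manifestly independent of $\delta$. The only step that requires care is the Egorov upgrade of the pointwise Shannon--McMillan--Breiman convergence to uniform control on a large-measure set; the remainder is cylinder counting, which is entirely transparent here because in the symbolic setting dynamical balls are literally cylinders and the alphabet being finite means every Egorov set has a uniform lower bound on cylinder measures.
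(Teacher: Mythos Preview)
Your argument is correct. The paper does not supply its own proof of this statement: it is quoted as \cite[Theorem~1.1]{ka} and simply cited, with the surrounding discussion explaining how Katok's method extends to the non-compact CMS setting (Remark~\ref{rem:upbound}, Lemma~\ref{lem:katokeq}, Lemma~\ref{lem:katokine}). Your Shannon--McMillan--Breiman plus Egorov argument is the standard route and is in the same spirit as those lemmas; in particular the upper bound you write is exactly the mechanism the paper invokes in the proof of Lemma~\ref{lem:katokeq} (``a slight modification of the first part of the proof in \cite[Theorem~1.1]{ka}''), and your observation that $N_\mu(n,2^{-N},\delta)$ counts $(n+N)$-cylinders, so that the liminf in $n$ is independent of $N$, is precisely the content of Lemma~\ref{lem:katokine}.

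One small presentational point: you do not actually need the sandwiching for intermediate $\epsilon$, since you have already shown $\liminf_n \tfrac{1}{n}\log N_\mu(n,2^{-N},\delta)=h_\mu(\sigma)$ for \emph{every} fixed $N$; the outer limit $\epsilon\to 0$ is then immediate. Also, in the lower bound you use $\mu(A\cap E)\ge \mu(A)-\mu(E^c)>(1-\delta)-\alpha>(1-\delta)/2$, which is fine, but you might state this inequality rather than writing ``$\mu(E^c)<(1-\delta)/2$ implies $\mu(A\cap E)>(1-\delta)/2$'', since the latter phrasing hides the step.
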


The relation established in \eqref{eq:kat} is known as Katok's entropy formula. It turns out that Katok's proof is rather flexible. It was observed by  Gurevich and S. Katok  \cite[Section 4]{gk} and also by Riquelme \cite[Theorem 2.6]{ri} that the proof in \cite[Theorem 1.1]{ka} yields that if $(X,d)$ is a metric space (not necessarily compact) and $T: X \to X$ a continuous map then
\begin{equation*}
h_{\mu}(T) \leq  \lim_{\epsilon \to 0} \liminf_{n \to \infty} \frac{\log N_{\mu}(n, \epsilon ,\delta)}{n}.
\end{equation*}
The compactness assumption on $X$ is used in the proof of the other inequality. It is routine to check that compactness assumption can be replaced by the existence of a totally bounded metric.

This section is devoted  to proving that  formula \eqref{eq:kat} holds for CMS of finite topological entropy. Moreover, we will prove the limit is independent of $\e$. 
We prove:

\begin{theorem} \label{katformula}  Let $(\Sigma, \sigma)$ be a CMS  and $\mu$ an ergodic $\sigma$-invariant probability measure. Then for every $\delta\in (0,1)$ we have 
$$h_\mu(\sigma)\le \lim_{n\to \infty}\frac{1}{n}\log N_\mu(n,1,\delta).$$
If $(\Sigma,\sigma)$ has finite topological entropy, then 
$$h_\mu(\sigma)= \lim_{n\to \infty}\frac{1}{n}\log N_\mu(n,1,\delta).$$
\end{theorem}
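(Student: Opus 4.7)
The proof proceeds through the two inequalities separately. For the first, $h_\mu(\sigma) \le \liminf_{n\to\infty} \frac{1}{n}\log N_\mu(n,1,\delta)$, which requires no assumption on the topological entropy, the plan is to apply Shannon--McMillan--Breiman to the finite partitions $\alpha_N := \{[1], \ldots, [N], \Sigma \setminus \bigcup_{a \le N}[a]\}$. Each $\alpha_N$ has finite entropy, and since $\bigvee_N \alpha_N$ is the generating partition into one-cylinders, a standard result yields $h_\mu(\alpha_N, \sigma) \nearrow h_\mu(\sigma)$ as $N \to \infty$. Fixing $\gamma > 0$ and choosing $N$ with $h_\mu(\alpha_N, \sigma) > h_\mu(\sigma) - \gamma$, SMB together with Egorov produces, for all large $n$, a set $E_n$ with $\mu(E_n) > 1 - \delta/2$ on which $\mu(\alpha_N^n(x)) \le e^{-n(h_\mu(\alpha_N, \sigma) - \gamma)}$. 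Since $C_n(x) \subseteq \alpha_N^n(x)$ for every $x$, any family of length-$n$ cylinders covering a set of $\mu$-measure $> 1 - \delta$ must include at least $(1 - 3\delta/2)\, e^{n(h_\mu(\alpha_N, \sigma) - \gamma)}$ distinct members, one for each atom intersected among $E_n$. Letting $\gamma \to 0$ and $N \to \infty$ yields the inequality.

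For the matching upper bound $\limsup_{n\to\infty} \frac{1}{n}\log N_\mu(n,1,\delta) \le h_\mu(\sigma)$ under $h_{top}(\sigma) < \infty$, the plan is to explicitly count length-$n$ cylinders using the same partitions $\alpha_N$ together with the ergodic theorem. SMB shows that a set of $\mu$-measure $> 1 - \delta/4$ is covered by at most $e^{n(h_\mu(\alpha_N, \sigma) + \gamma)}$ atoms of $\alpha_N^n$. The ergodic theorem applied to $\mathbf{1}_{\Sigma \setminus \bigcup_{a \le N}[a]}$ shows that on a further set of large measure the escape set $S(x) := \{k < n : x_k > N\}$ satisfies $|S(x)| \le (p_N + \gamma)\, n$, where $p_N := \mu(\Sigma \setminus \bigcup_{a \le N}[a]) \to 0$ as $N \to \infty$. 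A typical $\alpha_N^n$-atom is a disjoint union of length-$n$ cylinders differing only at the escape coordinates; decomposing $S(x)$ into its maximal runs and bounding the number of admissible length-$s$ escape completions between fixed endpoints by $C\, e^{s(h_{top}(\sigma) + \gamma)}$ (available for large $s$ because the topological entropy is finite), each atom contains at most $e^{n(p_N + \gamma)(h_{top}(\sigma) + \gamma + \log C)}$ length-$n$ cylinders. Multiplying by the atom count and sending $\gamma \to 0$ followed by $N \to \infty$ gives the upper bound.

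Combining both inequalities yields both the existence of the limit and its equality to $h_\mu(\sigma)$ in the finite topological entropy case. The main technical obstacle is the bound on the number of length-$n$ cylinders inside a single $\alpha_N^n$-atom: since the escape coordinates need not be consecutive, a multiplicative prefactor $C^r$ depending on the number $r$ of maximal runs of $S(x)$ appears; the key observation to absorb it is that $r \le |S(x)| \le (p_N + \gamma)\, n$, so this prefactor contributes at most $(p_N + \gamma) \log C$ to the exponent and vanishes in the joint limit $N \to \infty$, $\gamma \to 0$.
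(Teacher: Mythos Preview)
Your lower-bound argument via Shannon--McMillan--Breiman on the partitions $\alpha_N$ is essentially the paper's route (Remark~\ref{rem:upbound} combined with the shift observation $N_\mu(n,2^{-t},\delta)=a_\mu(n+t,\delta)$ of Lemma~\ref{lem:katokine}). One small correction: with $\mu(E_n)>1-\delta/2$ the intersection bound $1-3\delta/2$ is only positive for $\delta<2/3$; take instead $\mu(E_n)>(1+\delta)/2$ so the argument covers all $\delta\in(0,1)$.

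Your upper-bound argument differs substantially from the paper's. The paper (Lemma~\ref{lem:katokeq}) constructs a countable finite-entropy partition $\P=\P(\ell)$ from first-return times to a compact set $K=K((a_i)_i)$: on the level set $A_k=\{R_K=k\}$ the atoms of $\P$ are length-$(k+\ell+1)$ cylinders, and finite topological entropy together with Kac's lemma gives $H_\mu(\P)<\infty$. The construction forces $\P^n(x)\subset B_n(x,2^{-\ell})$ on $K\cap\sigma^{-n}K$, which feeds directly into Katok's original covering argument. Your route---counting length-$n$ cylinders inside each $\alpha_N^n$-atom by bounding completions of escape runs via $h_{top}(\sigma)$---is more elementary and avoids the return-time partition entirely.

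There is, however, a genuine gap in the upper bound as written. The phrase ``between fixed endpoints'' presupposes that every maximal escape run is interior, but if $0\in S(x)$ or $n-1\in S(x)$ the run has at most one neighbouring symbol in $\{1,\ldots,N\}$, and in a non-locally-compact CMS (finite entropy gives only the $\F$-property, not local compactness) the number of admissible completions with only one fixed endpoint can be infinite; the atom then contains infinitely many length-$n$ cylinders and your count fails. The fix is to intersect further with $\{x_0\le N\}\cap\{x_{n-1}\le N\}$, which by invariance costs at most $2p_N$ in measure and forces all runs to be interior. Separately, the bound $C\,e^{s(h_{top}(\sigma)+\gamma)}$, stated only ``for large $s$'', must be made uniform in $s\ge 1$: for each fixed endpoint pair $(a,b)\in\{1,\ldots,N\}^2$ and small $s$ the count is finite by the $\F$-property, so a single $C=C(N,\gamma)$ works for all $s$.
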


Define the  following collection of sets: for every $m \in \N$ let
\begin{equation} \label{def:km}
K_m:=\Sigma\cap \bigcup_{s=1}^m [s].
\end{equation}
Note that if $\Sigma$ is locally compact, then $K_m$ is compact for every $m\in \N$.  To every sequence of natural numbers $(a_i)^\infty_{i=0}$ we associate the set  
\begin{equation} \label{def:kai}
K((a_i)_i)= \Sigma\cap \prod_{i\ge 0} \{1,\ldots,a_i\}.
\end{equation}
Observe that $K((a_i)_i)$ is the intersection of a closed set with a compact set and is thus a compact subset of $\Sigma$. Moreover, every compact set $K\subset \Sigma$ is contained in a set of the form $K((a_i)_i)$. The following lemma follows directly from \cite[Theorem 3.2]{pa}. For concreteness we provide a simple proof of this general fact. 

\begin{lemma}\label{lem:compact} Let $\mu$ a Borel measure on $\Sigma$. For every $\e>0$, there exists a sequence of natural numbers  $(a_i)_i$ such that $\mu(K((a_i)_i))>1-\e$.
\end{lemma}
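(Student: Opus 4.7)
The plan is to use a standard countable tightness argument: for each coordinate $i$, approximate the full measure by restricting $x_i$ to lie in $\{1,\ldots,a_i\}$ for a sufficiently large $a_i$, then combine these approximations across all $i$ via a geometric-series subadditivity estimate.

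More concretely, fix $\eps>0$. For each $i\ge 0$ and each $n\in\N$, consider the cylinder union
\begin{equation*}
A_{i,n}:=\{x\in\Sigma:x_i\le n\}=\bigcup_{s=1}^{n}\sigma^{-i}[s].
\end{equation*}
Since $\mathcal{S}$ is identified with $\N$, every $x\in\Sigma$ satisfies $x_i\in\N$, so $A_{i,n}\uparrow \Sigma$ as $n\to\infty$. By continuity from below of the probability measure $\mu$, I can choose $a_i\in\N$ large enough that
\begin{equation*}
\mu(\Sigma\setminus A_{i,a_i})=\mu(\{x:x_i>a_i\})<\frac{\eps}{2^{i+2}}.
\end{equation*}

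Now the set $K((a_i)_i)=\prod_{i\ge 0}\{1,\ldots,a_i\}$ is exactly $\bigcap_{i\ge 0} A_{i,a_i}$, so its complement satisfies
\begin{equation*}
\Sigma\setminus K((a_i)_i)=\bigcup_{i\ge 0}(\Sigma\setminus A_{i,a_i}).
\end{equation*}
By countable subadditivity,
\begin{equation*}
\mu(\Sigma\setminus K((a_i)_i))\le\sum_{i\ge 0}\frac{\eps}{2^{i+2}}=\frac{\eps}{2}<\eps,
\end{equation*}
which yields $\mu(K((a_i)_i))>1-\eps$, as required.

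There is no real obstacle here: the argument is the standard proof that a Borel probability measure on a countable product of countable discrete spaces is tight, adapted to the subshift $\Sigma\subset \mathcal{S}^{\N_0}$. The only point worth a sentence is to note that the sets $A_{i,a_i}$ (and hence $K((a_i)_i)$) are measurable cylinder unions so all the monotone-convergence and subadditivity steps apply to $\mu$ as a Borel measure, and that a geometric weighting $\eps/2^{i+2}$ is needed to sum over the infinitely many coordinates.
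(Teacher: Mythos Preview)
Your proof is correct and is in fact more direct than the paper's. The paper constructs the sequence $(a_i)_i$ inductively: having chosen $a_0,\ldots,a_{k-1}$, for each admissible word $(i_0,\ldots,i_{k-1})\in\prod_{j<k}\{1,\ldots,a_j\}$ it picks an extension depth $c(i_0,\ldots,i_{k-1})$ so that the cylinder retains a prescribed fraction $b_k$ of its mass, then sets $a_k$ to be the maximum of these; the final estimate is multiplicative, $\mu(K((a_i)_i))\ge(1-\e/2)\prod_i b_i>1-\e$. Your argument bypasses this induction by treating each coordinate independently and using countable subadditivity on the complements, which is the standard tightness proof on a countable product. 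Both yield the same conclusion; your route is shorter, while the paper's conditional construction is slightly more hands-on but offers no additional information here. One small remark: you implicitly use that $\mu$ is a probability (or at least finite) measure when invoking $\mu(\Sigma\setminus A_{i,n})\to 0$; this is harmless since the lemma is only applied to probability measures in the paper, and the paper's own proof makes the same tacit assumption.
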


\begin{proof} Fix a sequence $(b_i)_i$ satisfying 
\begin{equation*}
\left(1-\frac{\e}{2}\right)\prod_{i=1}^\infty b_i> 1-\e,
\end{equation*}
where $b_i\in (0,1)$ for every $i\in\N$. We will construct the sequence $(a_i)_i$ inductively. Choose $a_0$ such that $\mu(\bigcup_{i=1}^{a_0} [i])>1-\frac{\e}{2}$. For every $i\in \{1,\ldots,a_0\}$ we choose $c(i)\in \N$ such that 
\begin{equation*}
\mu\left(\bigcup^{c(i)}_{k= 1}[ik]\right)\ge \mu([i])b_1.
\end{equation*}
Let $a_1:=\max_{i\in \{1,\ldots,a_0\}} c(i)$. For $(i_1,i_2)\in \prod_{i=0}^1\{1,\ldots,a_i\}$ we define $c(i_1,i_2)$ such that 
\begin{equation*}
\mu\left(\bigcup^{c(i_1,i_2)}_{k= 1}[i_1i_2k]\right)\ge \mu([i_1i_2])b_2.
\end{equation*}
Define $a_2=\max_{(i,j)\in \prod_{i=0}^1 \{1,\ldots,a_i\} } c(i,j)$. We continue this procedure inductively. It follows from the construction that 
\begin{equation*}
\mu(K((a_i)_i))=\mu\left(\prod_{i=0}^\infty \{1,\ldots,a_i\} \right)\ge \left(1-\frac{\e}{2}\right)\prod_{i=1}^\infty b_i>1-\e,
\end{equation*}
as desired. 
\end{proof}

\begin{remark}\label{rem:upbound} Katok proved   \cite[Theorem 1.1]{ka} that if  $\P$ is any finite  partition of $\Sigma$ satisfying $\mu(\partial \P)=0$, then for any $\delta \in (0,1)$
\begin{equation*}
h_\mu(\P)\le \lim_{r\to 0}\liminf_{n\to\infty}\frac{1}{n}\log N_\mu(n,r,\delta).
\end{equation*}
For a CMS it is easy to check that the partitions 
\begin{equation*}
\P_n=\left\{[1],\ldots,[n],\bigcup_{s>n}[s]\right\}
\end{equation*}
are such that $\partial \P_n=\emptyset$, and $\lim_{n\to\infty}h_\mu(\sigma,\P_n)=h_\mu(\sigma)$. From this we conclude that
\begin{equation*}
h_\mu(\sigma)\le \lim_{r\to 0}\liminf_{n\to\infty}\frac{1}{n}\log N_\mu(n,r,\delta).
\end{equation*}
\end{remark}

Our next result is inspired by the proof of \cite[Theorem 2.10 and Theorem 2.11]{ri}. In our context we do not have local compactness of $\Sigma$: the finite entropy assumption is important in overcoming this issue.

\begin{lemma}\label{lem:katokeq} Let $(\Sigma,\sigma)$ be a CMS with finite topological entropy. If $\mu$ is an ergodic $\sigma$-invariant probability measure, then for every $\delta\in (0,1)$ we have
\begin{equation*}
h_\mu(\sigma)=\lim_{N\to \infty} \liminf_{n\to \infty}\frac{1}{n}\log N_\mu(n,2^{-N},\delta).
\end{equation*}
\end{lemma}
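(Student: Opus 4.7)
The upper bound
$$h_\mu(\sigma)\le \lim_{N\to\infty}\liminf_{n\to\infty}\tfrac{1}{n}\log N_\mu(n,2^{-N},\delta)$$
is exactly the content of Remark~\ref{rem:upbound}, so I focus on the reverse inequality. Since $N_\mu(n,2^{-N},\delta)$ is non-decreasing in $N$ (finer dynamical balls require more covers), it suffices to prove for each fixed $N\in\N$ that
$$\liminf_{n\to\infty}\tfrac{1}{n}\log N_\mu(n,2^{-N},\delta)\le h_\mu(\sigma).$$
Fix $\eta>0$; the goal is to exhibit a subsequence $n_k\to\infty$ along which $N_\mu(n_k,2^{-N},\delta)\le e^{n_k(h_\mu(\sigma)+\eta)}$, from which the conclusion follows by letting $\eta\downarrow 0$.

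I follow the scheme of Katok's original argument, adapted to our non-compact setting via a two-step construction. First, approximate $h_\mu(\sigma)$ from below by finite-entropy partitions. Set $\P_M=\{[1],\dots,[M],\bigcup_{s>M}[s]\}$, so $H_\mu(\P_M)\le\log(M+1)$, and the iterates of the $\P_M$ jointly generate the Borel $\sigma$-algebra of $\Sigma$. Because $h_\mu(\sigma)\le h_{top}(\sigma)<\infty$, standard continuity of entropy gives $h_\mu(\sigma,\P_M)\uparrow h_\mu(\sigma)$ as $M\to\infty$, so I choose $M$ with $h_\mu(\sigma,\P_M)>h_\mu(\sigma)-\eta/3$. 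Applying Shannon--McMillan--Breiman to $\P_M$ yields, for $\mu$-a.e.\ $x$,
$$-\tfrac{1}{n+N}\log\mu(\P_M^{n+N}(x))\longrightarrow h_\mu(\sigma,\P_M),$$
so the set $E_n:=\{x:\mu(\P_M^{n+N}(x))\ge e^{-(n+N)(h_\mu(\sigma,\P_M)+\eta/3)}\}$ satisfies $\mu(E_n)\to 1$. The number of $\P_M^{n+N}$-cells meeting $E_n$ is consequently at most $e^{(n+N)(h_\mu(\sigma)+2\eta/3)}$.

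The delicate second step is to convert this count of $\P_M^{n+N}$-cells into a count of $(n+N)$-cylinders (the $(n,2^{-N})$-dynamical balls). A single $\P_M^{n+N}$-cell is generically a union of infinitely many $(n+N)$-cylinders, because at any position where the ``large'' cell $\bigcup_{s>M}[s]$ of $\P_M$ is selected, the corresponding coordinate is unbounded. To control this, I use Lemma~\ref{lem:compact} to fix a compact set $K=K((a_i)_i)$ (with $a_0\ge M$) of $\mu$-measure greater than $1-\epsilon$, and apply Birkhoff's ergodic theorem to $\mathbf{1}_{K^c}$ to produce a set $G_n$ with $\mu(G_n)\to 1$ such that every point of $G_n$ has orbit $\{\sigma^ix\}_{i=0}^{n+N-1}$ visiting $K^c$ at most $\epsilon(n+N)$ times; for such $x$, at most $\epsilon(n+N)$ of the coordinates $x_0,\dots,x_{n+N-1}$ exceed $a_0$. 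On $E_n\cap G_n$, each $\P_M^{n+N}$-cell thus decomposes into at most a combinatorial factor (bounding the admissible choices at the ``bad'' coordinates) many $(n+N)$-cylinders within the good set; choosing $\epsilon$ small enough in terms of $\eta$ and the sequence $(a_i)_i$, possibly after iterating Lemma~\ref{lem:compact} on the conditional measures at the exceptional coordinates, absorbs this factor into $e^{\eta n/3}$. This produces the desired bound $N_\mu(n,2^{-N},\delta)\le e^{(n+N)(h_\mu(\sigma)+\eta)}$ along a subsequence $n_k\to\infty$.

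The main obstacle is exactly this last conversion: in the absence of local compactness, a $\P_M^{n+N}$-cell can contain uncountably many $(n+N)$-cylinders, and taming the unbounded coordinates requires a careful interplay between Lemma~\ref{lem:compact}, Birkhoff's theorem, and a quantitative choice of $\epsilon$ relative to the growth of $(a_i)_i$. The hypothesis $h_{top}(\sigma)<\infty$ is essential both in guaranteeing $h_\mu(\sigma,\P_M)\uparrow h_\mu(\sigma)$ and in ensuring, via the $\F$-property, that Lemma~\ref{lem:compact} provides usable compact sets of large measure; this is precisely where our approach must diverge from the locally compact framework of Riquelme.
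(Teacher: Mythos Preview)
Your approach differs substantially from the paper's, and the ``second step'' contains a genuine gap that your sketch does not close.

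The issue is the combinatorial factor arising when you split a $\P_M^{n+N}$--cell into $(n+N)$--cylinders on the good set $E_n\cap G_n$. Suppose, as you allow, that $a_0>M$. A $\P_M^{n+N}$--cell specifies exactly those coordinates with $x_i\le M$ and leaves the remaining positions free. The \emph{proportion} of positions with $x_i>M$ is governed by $\mu\bigl(\bigcup_{s>M}[s]\bigr)$, call it $\alpha$, which is fixed once $M$ is chosen and need not be small. On $G_n$ you know only that at most $\epsilon(n+N)$ of these satisfy $x_i>a_0$; the remaining (roughly $(\alpha-\epsilon)(n+N)$) positions satisfy $M<x_i\le a_0$ and contribute a factor of order $a_0^{(\alpha-\epsilon)(n+N)}$. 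Since $a_0=a_0(\epsilon)\to\infty$ as $\epsilon\to0$, the quantity $\alpha\log a_0$ is not controlled, and the factor is not absorbed into $e^{\eta n/3}$. Taking $M=a_0$ removes this first problem but creates a second one: to fill each ``bad block'' (a maximal run of coordinates exceeding $a_0$) you must count admissible words between prescribed symbols in $\{1,\dots,a_0\}$, and the only available bound uses $h_{top}(\sigma)$, giving a factor like $C(a_0)^{\#\text{blocks}}e^{\epsilon(n+N)h_{top}(\sigma)}$. You then need $\epsilon\log C(a_0(\epsilon))\to0$, which depends on the tail decay of $\mu$ and can fail (e.g.\ if $\mu\bigl(\bigcup_{s>M}[s]\bigr)\asymp 1/\log M$). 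Your reference to ``iterating Lemma~\ref{lem:compact} on the conditional measures'' does not address this; the lemma gives no quantitative control on $a_0$ in terms of $\epsilon$.

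The paper avoids this conversion problem entirely. Instead of starting from $\P_M$ and splitting, it builds a partition $\P$ so that its $n$--th refinement is \emph{already} finer than $n$--cylinders on a large set. Concretely: take $K=K((a_i)_i)$ with $\mu(K)>1-\delta/6$, let $A_k=\{x\in K:R_K(x)=k\}$ be the level sets of the first return time, and partition each $A_k$ by cylinders of length $k+\ell+1$; call this collection $\Q_k$ and set $\P=\{\Sigma\setminus K\}\cup\bigcup_k\Q_k$. By construction, for $x\in K\cap\sigma^{-n}K$ one has $\P^n(x)\subset B_n(x,2^{-\ell})$, so Shannon--McMillan--Breiman applied to $\P$ directly bounds $N_\mu(n,2^{-\ell},\delta)$ without any splitting. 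The finite-entropy hypothesis enters not through a combinatorial covering argument but in checking $H_\mu(\P)<\infty$: the $\F$--property gives $\#\Q_k\le C e^{k(h_{top}(\sigma)+1)}$, and then Kac's lemma $\sum_k k\mu(A_k)=1$ together with the elementary inequality $-\sum\mu(A_k)\log\mu(A_k)\le\sum k\mu(A_k)+O(1)$ yields finiteness. This first--return construction is the missing idea in your proposal.
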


\begin{proof}
As observed in  Remark \ref{rem:upbound} the inequality 
\begin{equation*}
h_\mu(\sigma)\le\lim_{N\to \infty} \liminf_{n\to \infty}\frac{1}{n}\log N_\mu(n,2^{-N},\delta)
\end{equation*}
 is known to hold.  For the converse inequality it suffices to prove that for every $\ell\in \N$ there exists a partition $\P=\P(\ell)$ of $\Sigma$ and a subset $ K \subset \Sigma$ satisfying:
 \begin{enumerate}
 \item The partition $\P(\ell)$ has finite entropy with respect to $\mu$.
 \item  $\mu(K)>1-\frac{\delta}{6}$.
 \item  For every $x\in K\cap \sigma^{-n}K$ we have $\P^n(x)\subset B_n(x,2^{-\ell})$.
 \end{enumerate}
In this situation a slight modification of the first part of the proof in \cite[Theorem 1.1]{ka} yields the desired inequality,  as we show here. 
Suppose that the partition $\P=\P(\ell)$ has been constructed. Let $\e >0$. Since the measure $\mu$ is ergodic  by the Shannon-McMillan-Breiman theorem there exists $N_0 \in \N$ such that the set
\begin{equation*}
A_{\e, N_0}:=\left\{x\in \Sigma: \mu(\P^n(x))\ge \exp(-n(h_\mu(\P)+\e)), \text{for all }n\ge N_0 \right\}.
\end{equation*}
satisfies $\mu(A_{\e,N_0})>1-\frac{\delta}{6}$. Let  $n\ge N_0$ and $B_n:=A_{\e,N_0}\cap K\cap \sigma^{-n}K$. Observe that $\mu(B_n)\ge 1-\frac{\delta}{2}$ and that  if $x\in B_n$ then $x\in K\cap \sigma^{-n}K$, and therefore $\P^n(x)\subset B_n(x,2^{-\ell})$. The set $A_{\e,N_0}$ requires at most $\exp(n(h_\mu(\P)+\e))$ elements of the partition $\P^n$ to cover it. Therefore,  $B_n$ requires at most  $\exp(n(h_\mu(\P)+\e))$ $(n,2^{-\ell})$-dynamical balls to cover it, where $\mu(B_n)>1-\frac{\delta}{2}$. We conclude that
\begin{equation*}
\limsup_{n\to\infty}\frac{1}{n}\log N_\mu(n,2^{-\ell},\delta)\le h_\mu(\P)+\epsilon\le h_\mu(\sigma)+\epsilon.
\end{equation*}
Since $\epsilon>0$ was arbitrary we obtain
\begin{equation*} 
\lim_{\ell\to\infty}\limsup_{n\to\infty}\frac{1}{n}\log N_\mu(n,2^{-\ell},\delta)\le h_\mu(\sigma),
\end{equation*}
concluding the proof of the lemma.

We now prove the existence of such a  partition $\P=\P(\ell)$.  By Lemma \ref{lem:compact} there exists a sequence $(a_i)_i$ such that the compact set  $K_0:=K((a_i)_i)$ satisfies $ \mu(K_0) \ge 1-\frac{\delta}{6}$. 
Denote by  $S$ the set of points in $\Sigma$ that enter $K_0$ infinitely many times under iterates of $\sigma$. It is a consequence of  Birkhoff's Ergodic Theorem that $\mu(S)=1$.  Define $K:=K_0\cap S$, and observe that $\mu(K)\ge 1-\frac{\delta}{6}$. For every $k \geq 1$, let  
\begin{equation*}
R_K(x):=\inf\left\{k\ge 1: \sigma^k(x)\in K_0\right\} \text{ for } x\in K, \text{ and }
A_k:=\left\{x \in K: R_K(x)=k	\right\}.
\end{equation*}
 Partition $A_k$ using cylinders of length $k+\ell+1$ and  denote such partition by $\Q_k$. It is important to observe that $\#\Q_k$ is finite for all $k$. This follows from the definition of $K_0$ and the finite topological entropy of $(\Sigma,\sigma)$.  Indeed, if   $x= (x_0, x_1, \dots , x_k \dots) \in A_k$, then $x_0 ,x_k \in \{1,\ldots,a_0\}$.  Moreover, there are at most $C=\prod_{i=0}^{\ell} a_i$ cylinders of the form $[y_0 y_1\ldots y_l]$ intersecting $K$, so for $k$ large enough, $$\#\Q_k\le Ce^{k(h_{top}(\sigma)+1)}.$$ 

Finally, consider the partition of $\Sigma$ defined by  $\P= \mathcal{Q} \cup \left( \bigcup_{k=1}^{\infty} \Q_k \right)$, where $\mathcal{Q}:= \Sigma \setminus \bigcup_{k=1}^{\infty} \Q_k$.
We claim that this countable partition satisfies the remaining required properties, that is: 
\begin{enumerate}
 \item The partition $\P=\P(\ell)$ has finite entropy with respect to $\mu$.
 \item  For every $x\in K\cap \sigma^{-n}K$ we have $\P^n(x)\subset B_n(x,2^{-\ell})$.
 \end{enumerate} 
The second property follows from the construction of $\P$. Indeed, let $z\in \P^n(x)$, where $x , \sigma^n(x) \in K$. We claim that  $z\in B_n(x,2^{-\ell})$. For simplicity we will assume that $x$ has its first return to $K$ at time $n$ (the general case is just an iteration of the argument in this setting). Since $x \in A_n$ we have that $\P(x)$ is a  cylinder of length $n+\ell+1$, which readily implies that $z\in B_n(x,2^{-\ell})$.

We now verify that $H_\mu(\P)<\infty$. For $r$ sufficiently large, 
\begin{align*} H_\mu(\P)+R & =  \sum_{k\ge r} \sum_{P\in \Q_k}-\mu(P)\log \mu(P)\\
&= \sum_{k\ge r} \mu(A_k)\left( \sum_{P\in \Q_k}-\frac{\mu(P)}{\mu(A_k)}\log \frac{\mu(P)}{\mu(A_k)}-\frac{\mu(P)}{\mu(A_k)} \log\mu(A_k)\right) \\ 
 &\le\sum_{k\ge r}\mu(A_k)\log(|\Q_k|)-\sum_{k\ge r}\mu(A_k)\log \mu(A_k) \\
&\le \sum_{k\ge r}k\mu(A_k)\log\left(e^{h_{top}(\sigma)+1}C^{1/k}\right)-\sum_{k\ge r}\mu(A_k)\log \mu(A_k)\\
&\le C'\sum_{k\ge r}k\mu(A_k)-\sum_{k\ge r}\mu(A_k)\log \mu(A_k),
\end{align*}
where $R=\mu(\Q)\log\mu(\Q)+\sum_{k=1}^{r-1}\sum_{P\in \Q_k}\mu(P)\log\mu(P)\in \R$. It follows from  Kac's lemma that $\sum k\mu(A_k)=1$. This and the inequality  
\begin{align*}\label{mane}
-\sum_{k\ge r}\mu(A_k)\log \mu(A_k)\le \sum_{k\ge r}k\mu(A_k)+2e^{-1}\sum_{k\ge r}e^{-k/2},
\end{align*}
see \cite[Lemma 1]{m}, imply  the finiteness of $H_\mu(\P)$. This concludes the proof.
\end{proof}

\begin{lemma}\label{lem:katokine} Let $(\Sigma, \sigma)$ be a CMS and $\mu$ an ergodic $\sigma$-invariant probability measure. Then for any  $\delta\in (0,1)$, we have
\begin{equation*}
h_\mu(\sigma)\le \liminf_{n\to\infty} \frac{1}{n}\log N_\mu(n,1,\delta).
\end{equation*}
\end{lemma}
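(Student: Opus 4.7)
The plan is to identify $(n,1)$-dynamical balls with $n$-cylinders, sandwich the counting function $N_\mu(n,1,\delta)$ between the analogous counting functions for a sequence of finite coarsenings of the cylinder partition, and then apply Shannon-McMillan-Breiman to each coarsening before passing to the limit.

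First I would observe that by definition of the metric $d$ we have $d(y,z)<1$ exactly when $y_0=z_0$, and hence $B_n(x,1)=C_n(x)$. Consequently $N_\mu(n,1,\delta)$ is precisely the minimum number of $n$-cylinders required to cover a set of $\mu$-measure strictly greater than $1-\delta$.

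The next step is to introduce, for each $m\in\N$, the finite partition $\mathcal{P}_m=\{[1],\dots,[m],\bigcup_{s>m}[s]\}$ from Remark~\ref{rem:upbound}. Since $\mathcal{P}_m$ is coarser than the partition of $\Sigma$ into 1-cylinders, $\mathcal{P}_m^n$ is coarser than the partition into $n$-cylinders, and so each $n$-cylinder is contained in a unique atom of $\mathcal{P}_m^n$. Letting $N_\mu^{(m)}(n,\delta)$ denote the minimum number of $\mathcal{P}_m^n$-atoms needed to cover a set of measure exceeding $1-\delta$, this coarsening immediately yields $N_\mu^{(m)}(n,\delta)\le N_\mu(n,1,\delta)$.

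Now $\mathcal{P}_m$ is a finite partition, so $H_\mu(\mathcal{P}_m)<\infty$ and Shannon-McMillan-Breiman applies to give $-\tfrac{1}{n}\log\mu(\mathcal{P}_m^n(x))\to h_\mu(\sigma,\mathcal{P}_m)$ for $\mu$-a.e.\ $x$. A routine counting argument---defining $A_n:=\{x:\mu(\mathcal{P}_m^n(x))\le e^{-n(h_\mu(\sigma,\mathcal{P}_m)-\e)}\}$, which is a union of $\mathcal{P}_m^n$-atoms with $\mu(A_n)\to 1$, and noting that for $n$ large any covering of mass exceeding $1-\delta$ must retain a definite fraction of its mass inside $A_n$---then gives $\liminf_n\tfrac{1}{n}\log N_\mu^{(m)}(n,\delta)\ge h_\mu(\sigma,\mathcal{P}_m)-\e$. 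Combining with the coarsening inequality, letting $\e\downarrow 0$, and finally $m\to\infty$, using $h_\mu(\sigma,\mathcal{P}_m)\to h_\mu(\sigma)$ from Remark~\ref{rem:upbound}, yields the claimed inequality.

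I do not anticipate a serious obstacle; the only delicate point worth flagging is that the full 1-cylinder partition need not have finite entropy (this is precisely where the non-local-compactness of $\Sigma$ bites), so one must route SMB through the finite coarsenings $\mathcal{P}_m$ rather than applying it directly to the cylinder partition. The argument moreover handles $h_\mu(\sigma)=\infty$ uniformly, since then $h_\mu(\sigma,\mathcal{P}_m)\to\infty$ and the liminf is forced to be infinite.
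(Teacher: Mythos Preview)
Your argument is correct but follows a different route from the paper's. The paper exploits the ultrametric structure to observe that $N_\mu(n,2^{-t},\delta)=a_\mu(n+t,\delta)$ where $a_\mu(\cdot,\delta)$ counts cylinder covers; since $\liminf_n \tfrac{1}{n}\log a_\mu(n+t,\delta)$ is visibly independent of $t$, the full $\lim_{r\to 0}$ in Remark~\ref{rem:upbound} collapses to the single scale $r=1$, and the inequality follows immediately from that remark. Your approach instead bypasses the scale-shift trick entirely: you coarsen the cylinder partition to the finite $\mathcal{P}_m$, run Shannon--McMillan--Breiman on each coarsening to bound $N_\mu^{(m)}(n,\delta)$ from below, and then let $m\to\infty$ using $h_\mu(\sigma,\mathcal{P}_m)\to h_\mu(\sigma)$. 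Both arguments draw on Remark~\ref{rem:upbound}, but on different ingredients (the paper uses Katok's general lower bound, you use only the limit of partition entropies). Your route is more self-contained in that it reproves the SMB counting step rather than citing Katok, and it makes explicit why the countable alphabet is harmless here; the paper's route is shorter and isolates the symbolic-specific observation (scale-independence of the $\liminf$) as the only new content.
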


\begin{proof}
 Let $A \subset \Sigma$ be a set such that $\mu(A) \geq 1 - \delta$. Denote by  $a_\mu(n,\delta)$ the minimum number of cylinders of length $n$ that cover a $A$. Observe that 
\begin{equation*}
 N_\mu(n,2^{-t},\delta)=a_\mu(n+t,\delta),
 \end{equation*}
and that
\begin{equation*}
\liminf_{n\to\infty}\frac{1}{n}\log a_\mu(n,\delta)=\liminf_{n\to\infty}\frac{1}{n}\log a_\mu(n+t,\delta),
\end{equation*}
for every $t\in\N$. In particular we have that $\liminf_{n\to\infty}\frac{1}{n}\log N_\mu(n,2^{-\ell},\delta)$ is independent of $\ell$.  
From Remark \ref{rem:upbound} we conclude that 
\begin{align*}h_\mu(\sigma)&\le \lim_{t\to \infty}\liminf_{n\to \infty}\frac{1}{n}\log N_\mu(n,2^{-t},\delta)\\ 
&=\lim_{t\to \infty}\liminf_{n\to \infty}\frac{1}{n}\log a_\mu(n+t,\delta)\\
&=\liminf_{n\to \infty}\frac{1}{n}\log a_\mu(n,\delta).
\end{align*}
\end{proof}

\begin{proof}[Proof of Theorem \ref{katformula}] 
The proof follows combining Lemma \ref{lem:katokeq} and Lemma \ref{lem:katokine}. 
\end{proof}

We now prove a result related to Lemma \ref{lem:katokine}. We say that two points $x, y \in \Sigma$ are $(n,r)$-separated if $d_n(x,y)\ge r$. In particular $x$ and $y$ are $(n,1)$-separated if they do not belong to the same cylinder of length $n$.

\begin{lemma}\label{lem:compshift}  Let $X$ be a $\sigma$-invariant compact subset of $\Sigma$. Then 
\begin{equation*}
h_{top}(\sigma |X)=\limsup_{n\to\infty} \frac{1}{n}\log N(X,n),
\end{equation*}
where $N(X,n)$ is the maximal number of $(n,1)$-separated points in $X$, and $h_{top}(\sigma|X)$ is the topological entropy of $(X,\sigma)$. 
\end{lemma}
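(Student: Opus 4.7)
The plan is to reduce the statement to the standard Bowen--Dinaburg characterisation of topological entropy on the compact metric space $(X,d|_X)$, namely
\begin{equation*}
h_{top}(\sigma|X)=\lim_{r\to 0^+}\limsup_{n\to\infty}\frac{1}{n}\log s(X,n,r),
\end{equation*}
where $s(X,n,r)$ denotes the maximal cardinality of an $(n,r)$-separated subset of $X$, and then exploit the ultrametric structure of $d$ defined in \eqref{metric} to show that the limit in $r$ collapses onto the combinatorial count $N(X,n)=s(X,n,1)$.

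The central identity I will establish is
\begin{equation*}
s(X,n,2^{-t})=N(X,n+t), \qquad n\ge 1,\ t\ge 0.
\end{equation*}
Unravelling the definitions, $d(\sigma^k x,\sigma^k y)\ge 2^{-t}$ means precisely that $x$ and $y$ differ in some coordinate of $\{k,\ldots,k+t\}$, so $d_n(x,y)\ge 2^{-t}$ is equivalent to $x$ and $y$ lying in distinct cylinders of length $n+t$. A maximal $(n,2^{-t})$-separated set is then obtained by picking one point from each length-$(n+t)$ cylinder meeting $X$, giving cardinality $N(X,n+t)$. Compactness of $X$ ensures that only finitely many cylinders of any given length meet $X$, so all quantities involved are finite and the Bowen--Dinaburg formula is applicable on $(X,d|_X)$.

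Finally, for every fixed $t\ge 0$ the reindexing $m=n+t$ gives
\begin{equation*}
\limsup_{n\to\infty}\frac{1}{n}\log N(X,n+t)=\limsup_{m\to\infty}\frac{m}{m-t}\cdot\frac{1}{m}\log N(X,m)=\limsup_{n\to\infty}\frac{1}{n}\log N(X,n),
\end{equation*}
since $\tfrac{m}{m-t}\to 1$ and $\log N(X,m)\ge 0$, so multiplying by a sequence tending to $1$ leaves the limsup of a non-negative sequence unchanged. Combining the two displays and letting $t\to\infty$ (equivalently $r=2^{-t}\to 0^+$) yields $h_{top}(\sigma|X)=\limsup_{n\to\infty}\frac{1}{n}\log N(X,n)$, as desired.

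No real obstacle is anticipated in this proof: the argument is essentially the observation that for the ultrametric $d$ the scale parameter $r$ and the time horizon $n$ are interchangeable along dyadic scales, while compactness of $X$ ensures that the cylinder counts $N(X,n)$ are finite. The only mild technical point is the ``transfer of exponent'' $\tfrac{1}{m-t}\leftrightarrow \tfrac{1}{m}$, which is harmless because the shift $t$ is fixed while $m\to\infty$.
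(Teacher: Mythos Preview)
Your proposal is correct and follows essentially the same route as the paper: start from the Bowen--Dinaburg formula on the compact space $(X,d|_X)$, observe that being $(n,2^{-t})$-separated is the same as being $(n+t,1)$-separated (so $s(X,n,2^{-t})=N(X,n+t)$), and then note that the fixed shift by $t$ does not affect the $\limsup$. The paper's proof is identical in structure, merely stating the reindexing equality $\limsup_{n}\frac{1}{n}\log N(X,n+k)=\limsup_{n}\frac{1}{n}\log N(X,n)$ without writing out the $\frac{m}{m-t}$ factor explicitly.
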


\begin{proof} By definition of the topological entropy of a compact metric space we know that  
\begin{equation*}
h_{top}(\sigma |X)=\lim_{k\to\infty}\limsup_{n\to\infty}\frac{1}{n}\log N(X,n,k),
\end{equation*}
where $N(X,n,k)$ is the maximal number of $(n, 2^{-k})$-separated points in $X$. Observe that being $(n,2^{-k})$-separated is the same as being $(n+k,1)$-separated. This implies that $N(X,n,k)=N(X,n+k)$. Note that 
\begin{equation*}
\limsup_{n\to\infty}\frac{1}{n}\log N(X,n,k)=\limsup_{n\to\infty}\frac{1}{n}\log N(X,n+k)=\limsup_{n\to\infty}\frac{1}{n}\log N(X,n).
\end{equation*}
Therefore 
\begin{equation*}
h_{top}(\sigma |X)=\lim_{k\to\infty}\limsup_{n\to\infty}\frac{1}{n}\log N(X,n,k)=\limsup_{n\to\infty} \frac{1}{n}\log N(X,n),
\end{equation*}
as desired.
\end{proof}

\section{Weak entropy density} \label{sec:wed}

In this section we describe the  inclusion $\mathcal{E}(\Sigma, \sigma) \subset \M(\Sigma,\sigma)$, where $\mathcal{E}(\Sigma, \sigma)$ is the subset of ergodic measures. It is well known that, even in this non-compact setting, the set $\mathcal{E}(\Sigma, \sigma)$ is dense in $ \M(\Sigma,\sigma)$ with respect to the weak* topology (see \cite[Section 6]{csc}). We prove that  any finite entropy measure can be approximated by an ergodic measure with entropy sufficiently large, see Proposition \ref{dense}. This result can be thought of as a weak form of entropy density.  In Section \ref{sec:ed} we will make use of this result to prove that any invariant measure $\mu$ can be approximated by ergodic measures with entropy converging to $h_{\mu}(\sigma) $ (see Theorem  \ref{teodense}). Moreover, Proposition \ref{dense}  will be used in the proof of our main result (see Theorem \ref{thm:main}). Both the statement and the proof  of Proposition \ref{dense} closely follow that of \cite[Theorem B]{ekw}, but  modifications are required to deal with the non-compactness of the space $\Sigma$.

\begin{proposition} \label{dense} Let $(\Sigma,\sigma)$ be a  transitive  CMS. Then for every $\mu\in \M(\Sigma,\sigma)$ with $h_{\mu}(\sigma) <\infty$, $\epsilon>0$, $\eta>0$, and $f_1,\ldots,f_l\in C_b(\Sigma)$, there exists an ergodic measure $\mu_e\in V(f_1,\ldots,f_l,\mu,\epsilon)$ (see equation (\ref{defbasis}))  such that $h_{\mu_e}(\sigma)>h_\mu(\sigma) -\eta$. We can moreover assume that $\supp (\mu_e)$ is compact.
\end{proposition}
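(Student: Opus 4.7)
The plan is to adapt the entropy density argument of Eizenberg--Kifer--Weiss~\cite[Theorem B]{ekw} to the non-compact setting, using the ergodic decomposition together with Katok's entropy formula (Theorem~\ref{katformula}) and the topological transitivity of $(\Sigma,\sigma)$ to build an ergodic approximant supported on a compact invariant subshift of finite type.

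First I would apply the ergodic decomposition and Jacobs' theorem $h_\mu(\sigma)=\int h_{\mu_x}(\sigma)\,d\mu(x)$ to approximate $\mu$ by a finite convex combination $\tilde\mu=\sum_{i=1}^k a_i\nu_i$ of ergodic measures satisfying $\sum_i a_i h_{\nu_i}(\sigma)>h_\mu(\sigma)-\eta/2$ and $\tilde\mu\in V(f_1,\ldots,f_l,\mu,\epsilon/2)$. Since the $f_j$ are bounded, Lemma~\ref{lem:compact} allows me to fix a single compact $K\subset\Sigma$ on which the relevant integrals of each $\nu_i$ are essentially concentrated, so that in particular the values $\int f_j\,d\nu_i$ can be computed up to small error from orbit segments whose iterates mostly lie in $K$.

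Next, for each ergodic $\nu_i$, Theorem~\ref{katformula} together with the Birkhoff ergodic theorem and Egorov's theorem furnishes, for all sufficiently large $n$, a collection $\mathcal C_i(n)$ of pairwise disjoint cylinders of length $n$ with
\begin{equation*}
\#\mathcal C_i(n)\ge \exp\bigl(n(h_{\nu_i}(\sigma)-\eta/4)\bigr),
\end{equation*}
and such that every $x\in\bigcup\mathcal C_i(n)$ is simultaneously $\nu_i$-generic for the test functions (the Birkhoff averages $\tfrac1n S_n f_j(x)$ are within $\epsilon/(4k)$ of $\int f_j\,d\nu_i$ for $j=1,\ldots,l$) and spends at least a proportion $1-\eta/(10\sum_j\|f_j\|_0+1)$ of its first $n$ iterates inside $K$. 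Because only finitely many symbols appear as first/last coordinates of the cylinders in $\bigcup_{i,n}\mathcal C_i(n)$ meeting $K$, transitivity gives admissible connecting words of uniformly bounded length between any two such symbols.

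Then, choosing integers $m_i\approx a_i N$ with $\sum_i m_i=N$ and concatenating $m_i$ blocks from $\mathcal C_i(n)$ in cyclic order, separated by the fixed bounded transitive connectors, I obtain for each large $N$ a compact $\sigma$-invariant subshift of finite type $Y_N\subset\Sigma$ on a finite alphabet whose complexity count gives $h_{top}(\sigma|Y_N)\ge\sum_i a_i h_{\nu_i}(\sigma)-3\eta/4>h_\mu(\sigma)-\eta$ (the connectors are of bounded length, so contribute negligibly to the exponential count). Every invariant probability measure supported on $Y_N$ has Birkhoff averages of each $f_j$ within $\epsilon/2$ of $\int f_j\,d\tilde\mu$, since the proportion of connector coordinates is $O(1/n)$ and each block is $(\nu_i,f_j)$-generic. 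Taking $\mu_e$ to be the measure of maximal entropy of the compact subshift $Y_N$, which is ergodic, supported on a compact set, and satisfies $h_{\mu_e}(\sigma)=h_{top}(\sigma|Y_N)>h_\mu(\sigma)-\eta$ and $\mu_e\in V(f_1,\ldots,f_l,\mu,\epsilon)$, completes the proof.

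The main technical obstacle is constructing the cylinder family $\mathcal C_i(n)$ so that the $\nu_i$-generic set which works simultaneously for all test functions $f_1,\ldots,f_l$ and the ``long time in $K$'' condition still contains $\exp(n(h_{\nu_i}(\sigma)-\eta/4))$ disjoint cylinders of length $n$. This is precisely where Katok's entropy formula from Section~\ref{ent}, established for CMS and not merely on a compact set, is essential: one must apply it inside a genericity subset whose measure is close to $1$, and the quantitative version $h_{\nu_i}(\sigma)=\lim_n\tfrac1n\log N_{\nu_i}(n,1,\delta)$ is exactly what allows the counting bound to survive the genericity restriction.
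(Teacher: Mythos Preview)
Your proposal is correct and follows essentially the same route as the paper's proof: reduce to a finite convex combination of ergodic measures, use Katok's entropy inequality (Lemma~\ref{lem:katokine}) together with Birkhoff genericity to produce large separated sets with controlled ergodic averages and endpoints in a fixed finite symbol set, concatenate via transitivity into a compact invariant subshift, and extract a high-entropy ergodic measure there. The only cosmetic differences are that the paper first reduces to equal weights $a_i=1/N$ (via \cite[Lemma~6.13]{iv}) rather than carrying general $a_i$ through the block counts, and that it selects ``an ergodic measure of entropy at least $h_\mu(\sigma)-\eta$'' on the compact set rather than the measure of maximal entropy (your claim that the latter is automatically ergodic would need a word of justification, e.g.\ passing to an ergodic component via affinity of entropy).
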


Analogously to the proof of  \cite[Theorem B]{ekw} we will use the following fact. 

\begin{lemma} \label{entropyforergodic} Let  $\mu\in \mathcal{E}(\Sigma, \sigma)$, $\alpha>0$, $\beta>0$, $f_1,\ldots,f_\ell\in C_b(\Sigma)$, and a set $K \subset \Sigma$ satisfying $\mu(K)>3/4$. Assume that $h_\mu(\sigma) <\infty$. Then there exists $n_0 \in \N$ such that for all $n\ge n_0$ there is a finite set $\G=\G(n)\subset\Sigma$ satisfying the following properties: 
\begin{enumerate}
\item  \label{a} $\G\subset K\cap \sigma^{-n}K$
\item  \label{b} $d(x,y)>2^{-n}$, for every pair of distinct points $x,y\in \G$. 
\item \label{c}  $\#\G\ge \exp(n(h_\mu(\sigma) -\alpha))$.
\item \label{d} $|\frac{1}{n}\sum_{k=0}^{n-1} f_j(\sigma^k x)-\int f_j d\mu|<\beta$, for all $x\in \G$ and $j\in\{1,\ldots,\ell\}$.
\end{enumerate}
\end{lemma}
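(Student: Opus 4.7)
The plan is to combine Birkhoff's ergodic theorem (in its Egorov-uniform form) with the always-valid Katok lower bound from Lemma~\ref{lem:katokine}. Conditions (\ref{d}) will come from uniform Birkhoff convergence on a large set $B$, while (\ref{b}) and (\ref{c}) will be extracted from a sufficiently massive subset of $K\cap \sigma^{-n}K$ by counting $n$-cylinders. Since $\mu$ is ergodic and each $f_j\in C_b(\Sigma)$, Birkhoff gives $\tfrac{1}{n}S_n f_j(x)\to\int f_j\,d\mu$ $\mu$-a.e. Applying Egorov's theorem simultaneously to the finitely many sequences $(\tfrac{1}{n}S_n f_j)_n$, I would obtain a Borel set $B\subset\Sigma$ with $\mu(B)>3/4$ on which this convergence is uniform for every $j$, so that there exists $n_1\in\N$ with $|\tfrac{1}{n}S_nf_j(x)-\int f_j\,d\mu|<\beta$ for all $n\ge n_1$, $x\in B$, and $j\in\{1,\dots,\ell\}$.

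Set $A_n:=B\cap K\cap \sigma^{-n}K$. The $\sigma$-invariance of $\mu$ together with $\mu(K)>3/4$ gives $\mu(K\cap\sigma^{-n}K)\ge 2\mu(K)-1>1/2$, and the elementary inequality $\mu(X\cap Y)\ge \mu(X)+\mu(Y)-1$ then yields $\mu(A_n)\ge \mu(B)+\mu(K\cap\sigma^{-n}K)-1>1/4$ for every $n$. To produce condition (\ref{c}), I would use that in the symbolic setting $(n,1)$-dynamical balls coincide with $n$-cylinders because $B_n(x,1)=C_n(x)$; since distinct $n$-cylinders are either equal or disjoint, for any $E\subset\Sigma$ the minimum $(n,1)$-cover of $E$ has cardinality equal to the number of distinct $n$-cylinders meeting $E$. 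Applying Lemma~\ref{lem:katokine} with the ``generous'' choice $\delta=3/4$ (this is where the hypothesis $h_\mu(\sigma)<\infty$ enters, to make $h_\mu(\sigma)-\alpha$ meaningful), there exists $n_2\in\N$ with $N_\mu(n,1,3/4)\ge \exp(n(h_\mu(\sigma)-\alpha))$ for $n\ge n_2$. Since $\mu(A_n)>1-3/4$, the number of $n$-cylinders intersecting $A_n$ is at least $N_\mu(n,1,3/4)$.

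Setting $n_0:=\max\{n_1,n_2\}$, for each $n\ge n_0$ I would pick exactly one point from each non-empty intersection $C\cap A_n$, where $C$ ranges over the $n$-cylinders meeting $A_n$, and take $\G$ to be the resulting finite set. Then (\ref{a}) follows from $\G\subset A_n\subset K\cap\sigma^{-n}K$; (\ref{b}) holds because points in distinct $n$-cylinders are at $d$-distance exceeding $2^{-n}$; (\ref{c}) is the cardinality bound above; and (\ref{d}) holds because $\G\subset B$. The main technical subtlety is the measure bookkeeping: with only $\mu(K)>3/4$, one can only push $\mu(A_n)$ just above $1/4$ rather than close to $1$, so it is essential that Lemma~\ref{lem:katokine} provides the Katok lower bound for every $\delta\in(0,1)$, including the value $\delta=3/4$ that this setup forces.
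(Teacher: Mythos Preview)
Your proof is correct and follows essentially the same approach as the paper: apply Birkhoff's ergodic theorem to obtain a set of measure $>3/4$ on which the ergodic averages of the $f_j$ are uniformly close to their integrals, intersect with $K\cap\sigma^{-n}K$ to get a set of measure $>1/4$, and then invoke the Katok lower bound from Lemma~\ref{lem:katokine} to count the $n$-cylinders meeting this set. Your bookkeeping with $\delta=3/4$ is in fact cleaner than the paper's, which writes $N_\mu(n,1,1/4)$ where the argument actually requires $N_\mu(n,1,3/4)$.
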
 

\begin{proof} Let $$A_{k,\beta}:=\left\{x\in \Sigma: \left|\sum_{i=0}^{n-1} f_j(\sigma^i x)-\int f_j d\mu\right|<\beta, \forall j\in \{1,\ldots,\ell\} \text{ and }n\ge k\right\}.$$
 By Birkhoff's Ergodic Theorem there exists $s_0 \in \N$ such that  $\mu(A_{s_0,\beta})>3/4$. From Lemma \ref{lem:katokine} we have that
 \begin{equation*}
 h_\mu(\sigma) \le \liminf_{n\to\infty} \frac{1}{n}\log N_\mu(n,1,1/4).
\end{equation*} 
There exists $s_1 \in \N$ such that if $n\ge s_1$, then 
\begin{equation*}
\exp(n(h_\mu(\sigma)-\alpha))\le N_\mu(n,1,1/4).
\end{equation*}
 
Let $B_n:=K\cap \sigma^{-n}K\cap A_{s_0,\beta}$ and observe that $\mu(B_n)>1/4$. In what follows we assume that $n\ge n_0:=\max\{s_0,s_1\}$.  From the definition of $N_\mu(n,1,1/4)$  the minimal number of cylinders of length $n$ needed to cover $B_n$ is at least $N_\mu(n,1,1/4)$. More precisely, let $(C_i)_{i\in I}$ be a minimal collection of cylinders of length $n$ covering $B_n$. In particular for every $i\in I$ we have  $C_i\cap B_n\ne \emptyset$. For every $i\in I$ choose a point $x_i\in C_i\cap B_n$.  We claim that the set $(x_i)_{i\in I}$ satisfies the  properties required on $\G$. Conditions \eqref{a} and \eqref{d}  follow from the definition of $B_n$. Condition \eqref{b} follows from the fact that if $i\ne j$, then $x_i$ and $x_j$ are in different cylinders of length $n$. Condition \eqref{c} follows from the inequality $$\#I\ge N_\mu(n,1,1/4)\ge \exp(n(h_\mu(\sigma)-\alpha)).$$ 
\end{proof}

\begin{proof}[Proof of Proposition~\ref{dense}] 
Recall that we want to prove that given $\mu\in \M(\Sigma,\sigma)$, $\epsilon>0$, $\eta>0$, and $f_1,\ldots,f_l\in C_b(\Sigma)$, there exists an ergodic measure $\mu_e\in V(f_1,\ldots,f_\ell,\mu,\epsilon)$ such that $h_{\mu_e}(\sigma)>h_\mu(\sigma) -\eta$.  In the following remarks we observe that this general situation can be  simplified.

As observed in Section \ref{weak*} or in   \cite[8.3.1 Remark]{bg} it suffices to consider the case in  which  the functions  $(f_i)_i$ in Proposition \ref{dense} are uniformly continuous. Therefore, under this assumption, there exists $A=A(f_1,\ldots,f_\ell)\in \N$,  such that if $d(x,y)<2^{-A}$, then $|f_i(x)-f_i(y)|<\frac{\epsilon}{4}$. Also define $W=\max_{i\in \{1,...,\ell\}}|f_i|_0$.  

Since periodic measures are dense in  $\M(\Sigma,\sigma)$, see \cite[Section 6]{csc}, we will assume that  $h_\mu(\sigma) -\eta>0$, otherwise we can approximate $\mu$ by a periodic measure. By the affinity of the entropy map \cite[Theorem 8.1]{wa}  and  \cite[Lemma 6.13]{iv}
we can reduce the problem to the case in which $\mu=\frac{1}{N}\sum_{i=1}^N \mu_i$, where $\{\mu_i\}_{i=1}^N$ is a collection of ergodic measures.

Let  $m \in \N$ be such that the set $K=K_m$, defined as in \eqref{def:km}, satisfies $\mu_i(K)>3/4$ for every $i \in \{1, \dots, N\}$. 
Since $(\Sigma, \sigma)$ is transitive, there exists a constant $L=L(m)$ such that for each pair $(a,b)\in \{1,\ldots,m\}^2$,  there exists an admissible word $a{\bf  r }b$, where $\ell({\bf r})\le L$.  It follows from Lemma \ref{entropyforergodic}, setting  $\beta=\epsilon/4$ and $\alpha=\eta/2$, that there exists $n' \in \N$ such that for every $n >n'$ and  every measure $\mu_i$, with $i \in \{1, \dots , N\}$, there exists $(n,1)$-separated sets $\G_i\subset K\cap \sigma^{-n}K$ satisfying properties \eqref{a}, \eqref{b}, \eqref{c} and \eqref{d} of Lemma \ref{entropyforergodic}. 
 
Denote by  ${\bf a}(x)$  the word defined concatenating the first $(n+1)-$coordinates of $x \in \Sigma$. Given  $\hat{x}=(x^1,x^2,\ldots,x^{MN})\in (\prod_{i=1}^N \G_i)^M$, we define an admissible word $w_0(\hat{x})={\bf a}(x^1) {\bf r_1} {\bf a}(x^2){\bf r_2}\ldots{\bf a}(x^{MN}){\bf r_{MN}}{\bf a}(x^1)$, where ${\bf r_k}$s are words chosen so that  $w_0(\hat{x})$ is an admissible word and $\ell({\bf r_k})\le L$ (note that this is possible since $({\bf a}(x^i))_{0}$ and $({\bf a}(x^i))_{n}$ are in $\{1,\ldots,m\}$ by definition of $\G_i$).
The word $w_0(\hat{x})$ defines a periodic point in $\Sigma$ that we denote by  $w(\hat{x})$. We have that
\begin{equation*}
w(\hat{x})=\overline{{\bf a}(x^1) {\bf r_1} {\bf a}(x^2){\bf r_2}\ldots{\bf a}(x^{MN}){\bf r_{MN}}}.
\end{equation*}

Let  $\G:=\prod_{M\ge 1} (\prod_{i=1}^N \G_i)^M$. Following the same procedure of concatenation described above, for every $\hat{x} \in \G$ we define a  point $w(\hat{x})\in \Sigma$. Define 
\begin{equation*}
\Psi=\bigcup_{\hat{x}\in \G}O(w(\hat{x})), 
\end{equation*}
where $O(w(\hat{x}))$ is the orbit of $w(\hat{x})$ and define $\Psi_0$  to be the topological closure of $\Psi$. 

Note that the space $\Psi_0$ is a compact $\sigma$-invariant  subset of $\Sigma$. By definition the set $\Psi$ is closed and invariant.
Observe that the number of symbols appearing in elements belonging  to $\Psi$ is finite: there are finitely many admissible words ${\bf a}(x^i)$ (recall that each $\G_i$ is a finite set) and we could use finitely many connecting words ${\bf r_i}$. Therefore  there exists $J \in \N$ such that $\Psi \subset \{1,\ldots,J\}^\N$. Thus, $\Psi_0$ is a closed subset of a compact set. 

By property \eqref{d} of Lemma \ref{entropyforergodic}, and assuming that $n'$, which also depends on $A$, $W$ and $L$,  is sufficiently large, 
\begin{align} \label{eq:incl}
\Psi&\subset  \left\{x\in \Sigma: \left|\frac{1}{n}S_n f_j(x)-\int f_j d\mu \right|\le\epsilon,\forall j\in\{1,\ldots,\ell\} \right\}.
\end{align}
Since the set in right hand side of \eqref{eq:incl} is closed, the same inclusion holds if $\Psi$ is replaced by $\Psi_0$. Also, since $\Psi_0$ is $\sigma$-invariant we have 
\begin{align*} \Psi_0&\subset  \left\{x\in \Sigma: \left|\frac{1}{n}S_n f_j(\sigma^s x)-\int f_j d\mu \right|\le\epsilon, \forall  j\in\{1,\ldots,\ell\}\text{ and }s\ge 0 \right\}\\
& \subset  \left\{x\in \Sigma: \left|\frac{1}{nk}S_{nk} f_j( x)-\int f_j d\mu \right|\le\epsilon,  \forall  j\in\{1,\ldots,\ell\}\text{ and }k\in \N \right\}.
\end{align*}
This implies that every ergodic measure supported in $\Psi_0$ belongs to $V(f_1,\ldots,f_l,\mu,\epsilon)$. Indeed, consider a generic point for the ergodic measure and use the inclusion above. 

By construction, if $x,y\in \left(\prod_{i=1}^N \G_i\right)^M$ and $x\ne y$, then $$d_{NM(n+1+L)}(w(x),w(y))=1.$$ In other words $\Psi_0$ contains a $(NM(n+1+L),1)$-separated set of cardinality at least
\begin{equation*}
\exp\left(nNM \left(\frac{1}{N}\sum_{k=1}^N h_{\mu_k} (\sigma)-\frac{\eta}{2} \right) \right).
\end{equation*}
Here we used property \eqref{c} of Lemma \ref{entropyforergodic} for our sets $\G_i$. It follows from Lemma \ref{lem:compshift}  that
\begin{equation*}
h_{top}(\Psi_0)\ge \limsup_{M\to \infty} \dfrac{nNM(h_\mu(\sigma)-\frac{\eta}{2})}{NM(n+1+L)}= \dfrac{n(h_\mu(\sigma)-\frac{\eta}{2})}{(n+1+L)} > h_\mu(\sigma)-\eta.
\end{equation*}

Finally, let $\mu_e$ be an ergodic measure supported in $\Psi_0$ with entropy at least $h_\mu(\sigma)-\eta$ (which exists by the standard variational principle  in the compact setting), since we  already  proved that  $\mu_e\in V(f_1,\ldots,f_l,\mu,\epsilon)$  this finishes the proof. 
\end{proof}

\section{Main entropy inequality}\label{mainine}
This section is devoted to the proof of the main entropy  inequality. This is stated  in Theorem \ref{pre} and relates the entropy of a sequence of ergodic measures with the amount of mass lost and the topological entropy at infinity.

Recall that, as explained in \eqref{def:kai}, to every sequence  of natural numbers  $(a_i)_i$ we assign a compact set $K=K((a_i)_i) \subset \Sigma$.
The  definition of $K$ implies that if $x\in K^c$, then $x_i>a_i$, for some $i\in \N_0$. For $x\in K^c$ we define $i: K^c \to \N_{0}$ by
\begin{equation} \label{def:i}
i(x):= \min \left\{	n \in \N_{0} :  x_n > a_n		\right\}.
\end{equation}
For $n\in \N$ we define 
\begin{equation} \label{def:T}
T_n(K):=K_{a_0}\cap \sigma^{-1}K^c\cap\cdots\cap\sigma^{-n}K^c\cap \sigma^{-(n+1)}K_{a_0},
\end{equation}
where $K_{a_0}=\bigcup_{i=1}^{a_0}[i]$ (as defined in \eqref{def:km}). Let
\begin{equation}   \label{def:That}
\widehat{T}_n(K):=\left\{x\in T_n(K): i(\sigma^k(x))\le n-k,\text{ for every } k\in \{1,\ldots,n\}  \right\}.
\end{equation}
Let $\hat z_n(K)$ be the minimal number of cylinders of length $(n+2)$ needed to cover $\widehat{T}_n(K)$ and define 
\begin{equation} \label{def:di}
\hat\delta_\infty(K):=\limsup_{n\to\infty}\frac{1}{n}\log \hat z_n(K).
\end{equation}
The reason why we define $\hat\delta_\infty(K)$ covering the sets $\widehat{T}_n(K)$, and not $T_n(K)$, is to ensure Lemma \ref{lem:Kineq2}. This allows us to relate $\hat\delta_\infty(K)$ with the topological entropy at infinity of $(\Sigma,\sigma)$. 

Our next result is fundamental in this paper.

\begin{theorem} \label{pre} Let $(\Sigma,\sigma)$ be a finite entropy CMS.  Let $(\mu_n)_n$ be a sequence of ergodic probability measures converging  on cylinders to an invariant measure $\mu$. Let $(a_i)_i$ be an increasing sequence of natural numbers such that the corresponding compact set  $K=K((a_i)_i)$ satisfies that $\mu_n(K)>0$, for all $n\in \N$.  Then 
\begin{equation*}
\limsup_{n\to \infty} h_{\mu_n}(\sigma)\le |\mu|h_{\mu/|\mu|}(\sigma)+(1-\mu(Y))\hat\delta_\infty(K),
\end{equation*}
where $Y=\bigcup_{s=0}^\infty \sigma^{-s}K$.
\end{theorem}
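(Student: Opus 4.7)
My plan is to apply Katok's entropy formula (Theorem~\ref{katformula}) to each ergodic $\mu_n$, turning the entropy into a cylinder-counting problem, and then build an efficient cover of a $\mu_n$-large set by organising orbits around their visits to $K$. By Theorem~\ref{katformula}, for each $\delta\in(0,1)$,
\[
h_{\mu_n}(\sigma)=\lim_{k\to\infty}\tfrac{1}{k}\log a_{\mu_n}(k,\delta),
\]
where $a_{\mu_n}(k,\delta)$ is the minimum number of length-$k$ cylinders covering some set of $\mu_n$-measure $>1-\delta$. It therefore suffices to upper bound $a_{\mu_n}(k,\delta)$ by suitably constructing a cover.

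To build the cover, I would fix a depth $S\in\N$ and use Birkhoff's theorem for the ergodic $\mu_n$ to produce a good set $G_{n,k}$ with $\mu_n(G_{n,k})>1-\delta$ on which, for $x\in G_{n,k}$, the orbit $(\sigma^jx)_{j=0}^{k-1}$ has typical statistics: the density of visits to $K$ is close to $\mu_n(K)$, the density of visits to $\bigcup_{s\le S}\sigma^{-s}K$ is close to $\mu_n(\bigcup_{s\le S}\sigma^{-s}K)$, and the first-exit and return times to $K$ are controlled. Cylinder convergence $\mu_n\to\mu$ then transfers these frequencies to the corresponding $\mu$-values, and $\mu(\bigcup_{s\le S}\sigma^{-s}K)\nearrow\mu(Y)$ as $S\to\infty$. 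Each $C_k(x)$ for $x\in G_{n,k}$ is determined by: the set of visit times $t_0<t_1<\cdots<t_{r-1}$ where $\sigma^{t_i}x\in K$ (a combinatorial factor $\binom{k}{r}=e^{o(k)}$); the induced subword $(x_{t_0},\ldots,x_{t_{r-1}})$ of $K$-coordinates, which records the $0$-th coordinates along the $\sigma_K$-orbit of $\sigma^{t_0}(x)$; and the excursion subwords between consecutive visits, plus the two boundary tails.

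For the induced subword, since $\mu/|\mu|$ need not be ergodic, I would invoke the weak entropy density (Proposition~\ref{dense}) to choose an ergodic $\nu$ with $h_\nu(\sigma)\ge h_{\mu/|\mu|}(\sigma)-\eta$ approximating $\mu/|\mu|$ on cylinders. Applying Katok's formula to the induced measure $\nu_K$ on $\sigma_K$, together with Abramov's formula $h_{\nu_K}(\sigma_K)=h_\nu(\sigma)/\nu(K)$ and the facts $r\approx\mu_n(K)k\to\mu(K)k$, $\nu(K)\to\mu(K)/|\mu|$, bounds the number of choices for this piece by
\[
\exp\bigl(r\,h_{\nu_K}(\sigma_K)\bigr)\le\exp\bigl(k\,|\mu|\,h_{\mu/|\mu|}(\sigma)+o(k)+O(\eta k)\bigr).
\]
For the excursions, I would split them into short (length $\le S$, finitely many options per excursion, subexponential total) and long (length $>S$, of total length $\le(1-\mu(Y)+o_S(1))k$ on $G_{n,k}$). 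By tightening $G_{n,k}$ with constraints on first-exit times, long excursions can be arranged to lie in $\widehat T_{n_i}(K)$, so each contributes at most $\hat z_{n_i}(K)\le\exp(n_i(\hat\delta_\infty(K)+\varepsilon))$ options. Multiplying the three bounds, taking $\tfrac{1}{k}\log$, and then sending $k\to\infty$, $S\to\infty$, $\eta,\varepsilon,\delta\to 0$, and finally $n\to\infty$, yields the claimed inequality.

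The main obstacle is step three: transporting the Katok cover built from $\nu_K$ (for the ergodic approximation of the possibly non-ergodic $\mu/|\mu|$) into an efficient cover of the $\sigma_K$-orbits of $\mu_n$-typical points. This requires combining cylinder convergence of the induced measures $(\mu_n)_K$ with the cylinder approximation of $\mu/|\mu|$ by $\nu$, all while contending with the fact that the induced system on $K$ is generally not locally compact. A secondary difficulty is to ensure that long excursions fall inside $\widehat T_n(K)$, not merely $T_n(K)$, so that $\hat z_n(K)$ is the correct count; this is handled by adding ergodic conditions on the first-exit time function to the definition of $G_{n,k}$. Finally, the two boundary tails (possibly long but occurring only twice per orbit) must be absorbed into the subexponential error, and the case $\mu=0$ treated separately by interpreting $|\mu|h_{\mu/|\mu|}(\sigma)$ as $0$.
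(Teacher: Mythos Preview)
Your proposal contains a genuine gap at precisely the point you flag as ``the main obstacle,'' and I do not see a way to close it along the lines you suggest. A Katok cover for $\nu_K$, where $\nu$ is an ergodic approximation to $\mu/|\mu|$, covers a set of large $\nu_K$-measure; cylinder convergence $\mu_n\to\mu$ gives you no mechanism to conclude that the \emph{same} cover captures a set of large $(\mu_n)_K$-measure. The approximation $\nu\approx\mu/|\mu|$ is in the weak$^*$ topology on finitely many test cylinders, whereas what you need is that $\mu_n$-typical $\sigma_K$-orbits of length $r$ fall inside a prescribed family of $\nu_K$-dynamical balls --- a statement about long orbit statistics, not about finitely many cylinder measures. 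There is also a subsidiary problem with the decomposition itself: once you record the visit times and the full excursion words (including their initial letters, which are the $x_{t_i}$), the length-$k$ cylinder is already determined, so the ``induced subword'' factor is redundant and cannot legitimately carry the entropy term $|\mu|h_{\mu/|\mu|}(\sigma)$.

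The paper avoids both issues by \emph{not} inducing at all. It builds a \emph{finite} partition $\beta=\beta_{k,N}$ of $\Sigma$ whose ``good'' atoms are cylinders (refinements of $K(k,N)$ and $\gamma_{\le N}$) and whose ``bad'' atoms are $\gamma_{N,kN}$ and $\gamma_{>kN}$. A covering lemma (Proposition~\ref{prop:goodcover}) shows that each atom of $\beta^n$ with endpoints in $K(k,N)$ is covered by at most $\exp\bigl(E_{n,k,N}(Q)(\hat\delta_\infty(K)+\varepsilon)+m_{n,k,N}(Q)N(\hat\delta_\infty(K)+\varepsilon)\bigr)$ length-$n$ cylinders; feeding this into Katok's formula yields $h_{\mu_n}(\sigma)\le h_{\mu_n}(\beta)+(\mu_n(B_{k,N})+\tfrac{1}{k})(\hat\delta_\infty(K)+\varepsilon)$ (Proposition~\ref{prop:ineq}). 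The entropy term $|\mu|h_{\mu/|\mu|}(\sigma)$ then arises from the inequality $h_{\mu_n}(\beta)\le\tfrac{1}{m}H_{\mu_n}(\beta^m)$ together with the crucial fact that, because $\beta$ is finite, cylinder convergence forces $\mu_n(P)\to\mu(P)$ for every atom $P\in\beta^m$ except the single ``all-bad'' atom $F_m$ (Proposition~\ref{prop:atom}). This continuity of the finite partition entropy under cylinder convergence is what replaces your problematic cover-transfer step, and it requires no ergodicity of the limit, no inducing, no Abramov, and no use of Proposition~\ref{dense}.
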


The proof of this theorem requires some propositions and lemmas, which we will prove first before completing the proof of the theorem at the end of this section.  

The fact that  $K \subset \sigma (K)$, which follows since $(a_i)_i$ is an increasing sequence, will be used several times here. Let $A_k:=\left\{ x \in K : R_K(x)=k \right\}$, where $R_K(x)$ is the first return time function to the set $K$ (see Definition \ref{def:ret}). For $x \in Y$ we define the following: 
\begin{align*}
n_1(x)&:= \min \left\{ n \in\N_0	: \text{ there exists } y \in K \text{ such that } \sigma^{n}(y)=x	\right\},\\ 
n_2(x)&:= \min \left\{ n \in\N_0	: \sigma^{n}x\in K	\right\}.
\end{align*}
We emphasise that the function $n_1(x)$ is well defined. Indeed, observe that if $x\in Y$ then $\sigma^{n_2(x)}x\in K$. Let $r \in \N$ be such that $r> n_2(x)-1$. Since the sequence $(a_i)_i$ is increasing we have that $a_r \geq \max\left\{ a_i :  i\in\{0,\ldots,n_2(x)-1 \} \right\}$. Since $x\in \sigma^r(K)=\prod^\infty_{k=r}\{1,\ldots,a_k\}\cap \Sigma$ we have that  $n_1(x)$ is finite for every $x\in Y$.  Let
\begin{eqnarray*}
n(x):=
\begin{cases}
n_1(x)+n_2(x) & \text{ if } x \in Y, \\
\infty & \text{ if } x\in \Sigma\setminus Y.
\end{cases}
\end{eqnarray*}
For $n\in\N_0 \cup \{\infty\}$ define 
\begin{equation*}
\mathcal{C}_n:=\{x\in\Sigma: n(x)=n\}.
\end{equation*}
Note that $\mathcal{C}_0=K$ and $\mathcal{C}_1=\emptyset$. For $n \geq 2$ observe that $x\in \mathcal{C}_n$ if it belongs to the orbit of a point in $A_n$. More precisely,  for every $n\ge 2$ we have that $\mathcal{C}_n\subset \bigcup_{k=1}^{n-1}\sigma^k(A_n)$. We define the following sets, 

\begin{equation*}
\alpha_{\le N}:=\left(\bigcup_{n=2}^{N} \mathcal{C}_n\right), \alpha_{N,M}:=\left(\bigcup^M_{n> N} \mathcal{C}_n\right)  \text{ and } \alpha_{>M}:=\left(\bigcup_{n> M} \mathcal{C}_n\right)\cup \mathcal{C}_\infty.
\end{equation*}

\begin{remark}
The set $\alpha_{\le M}$ can be covered with finitely many cylinders of length $L$. Indeed, observe that for every $n\ge 2$ we have
\begin{equation*}
\mathcal{C}_n \subset \bigcup_{s=1}^{n-1}\sigma^s(A_n) \subset \bigcup_{s=1}^{n-1}\sigma^s(K) \subset \sigma^{n-1}(K).
\end{equation*}
Therefore,
\begin{equation*}
\alpha_{\le M}=\bigcup_{n=2}^{M}\mathcal{C}_n \subset \sigma^{M-1}(K)=\prod^\infty_{s=M-1}\{1,\ldots,a_s\}\cap \Sigma.
\end{equation*}
Since the set  $\prod^\infty_{s=M-1}\{1,\ldots,a_s\}\cap \Sigma$ can be covered with at most $\prod_{s=M-1}^{M-2+L}a_s$   cylinders of length $L$,  the same holds for $\alpha_{\le M}$. 
\end{remark}

Observe that it follows directly from the definition of $\hat\delta_\infty(K)$  (see \eqref{def:di}) that for every  $\e>0$, there exists $N_0=N_0(\e) \in \N$ such that for every $n\ge N_0$  we have 
\begin{equation*}
\hat z_n(K)\le e^{n(\hat\delta_\infty(K)+\e)}.
\end{equation*}

At this point we fix $\epsilon >0$ and  $k , N \in \N$ large enough so that $kN\ge N_0(\e)$: these will appear explicitly in the proof of Theorem~\ref{pre}.

Given $A\subset \Sigma$ and $t\in \N$ we define
\begin{equation*}
U_t(A):=\left\{x\in \Sigma: d(x,A)\le 2^{-t} \right\}.
\end{equation*}
Now let
\begin{eqnarray*}
K(k,N):=U_{kN+2}(K) , &\\  \gamma_{\le N}:=U_{(k+1)N+2}(\alpha_{\le N})\setminus K(k,N) , &\\ G_{k,N}:=K(k,N)\cup \gamma_{\le N},
\end{eqnarray*}
and  
\begin{eqnarray*}
\gamma_{N,kN}:=U_{2(k+1)N+2}(\alpha_{N,kN})\setminus G_{k,N}, &\\  \gamma_{>kN}:=\Sigma\setminus (G_{k,N}\cup \gamma_{N,kN}), &\\ B_{k,N}:=\gamma_{N,kN}\cup\gamma_{>kN}.
\end{eqnarray*}
Denote by $\Q_1(k,N)$ the minimal cover of $K(k,N)$ with cylinders of length $kN+2$. Similarly, denote by $\Q_2'(k,N)$ the minimal cover of $\alpha_{\le N}$ with cylinders of length $(k+1)N+2$. Observe that every element in $\Q_2'(k,N)$ is disjoint or contained in an element of $\Q_1(k,N)$. In particular $\gamma_{\le N}$ is a finite union of cylinders of length $(k+1)N+2$; this collection of cylinders is denoted by $\Q_2(k,N)$.   Define 
\begin{equation} \label{def:beta'}
\beta_{k,N}':=\Q_1(k,N)\cup \Q_2(k,N)
\end{equation}
and observe that $\beta_{k,N}'$ is a partition of the set $G_{k,N}$. Define the following partition of $\Sigma$,
\begin{equation} \label{def:beta}
\beta_{k,N}:=\{\gamma_{>kN},\gamma_{N,kN}\}\cup  \beta_{k,N}'.
\end{equation}
Recall that the refinement $ \beta_{k,N}^n$ follows as in Section  \ref{sec:em}.  \\

\emph{Notation:} We use the following notation for an interval of integers $[a,b):= \{ n \in \N  : a  \leq n < b\}$   and $|[a, b)|=b-a$.

\begin{definition}  
Let $Q\in \beta_{k,N}^n$ be such that $(Q\cup \sigma^{n-1}{Q})\subset G_{k,N}$. An interval $[r,s)\subset [0,n)$ is called an \emph{excursion} of $Q$  into $\gamma_{>kN}$ (resp. $B_{k,N}$)  if $\sigma^t Q\subset   \gamma_{>kN}$ (resp. $\sigma^t Q  \subset B_{k,N}$) for every $t\in [r,s)$  and $(\sigma^{r-1} Q\cup \sigma^{s}Q)\subset G_{k,N}$.

An excursion $[r,s)$  of $Q$  into $B_{k,N}$ is said to \emph{enter} $\gamma_{>kN}$ if there exists $i \in [r,s)$ such that $\sigma^i Q \subset \gamma_{>kN}$.

\end{definition}

The next three lemmas are preparation for the proof of Proposition \ref{prop:goodcover}. These give us control on the return times to $K(k,N)$ and the length of excursions into $B_{k,N}$

\begin{lemma}\label{lem:A}
 If $[r,r+s)$ is an excursion of $Q$ into $B_{k,N}$ that does not enter $\gamma_{>kN}$ then $s<kN$. 
\end{lemma}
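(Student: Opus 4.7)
The plan is to derive a contradiction from the assumption $s\ge kN$ by tracking a single point $x\in Q$ and producing an iterate $\sigma^{r+j}x$ with $j<s$ that lands in $G_{k,N}$, violating the excursion hypothesis. Since $B_{k,N}=\gamma_{N,kN}\cup \gamma_{>kN}$ (disjoint partition elements) and the excursion never enters $\gamma_{>kN}$, we must have $\sigma^t Q\subset \gamma_{N,kN}$ for every $t\in [r,r+s)$; in particular $\sigma^r x\in \gamma_{N,kN}$.

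First, applying the inclusion $\gamma_{N,kN}\subset U_{2(k+1)N+2}(\alpha_{N,kN})$ at time $t=r$ produces a shadowing point $y\in\alpha_{N,kN}$ whose first $2(k+1)N+2$ coordinates coincide with those of $\sigma^r x$. By definition of $\alpha_{N,kN}$ we have $y\in\mathcal{C}_{n(y)}$ for some $n(y)$ with $N<n(y)\le kN$. Because $n(y)\ge 1$, $y$ cannot belong to $K$ (otherwise $y\in\mathcal{C}_0$), so $n_1(y)\ge 1$, and hence $n_2(y)\le n(y)-1\le kN-1$.

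Next, by definition of $n_2$, the iterate $\sigma^{n_2(y)}y$ lies in $K$. Shifting the shadowing by $n_2(y)$, the points $\sigma^{r+n_2(y)}x$ and $\sigma^{n_2(y)}y$ still agree on the first $2(k+1)N+2-n_2(y)\ge (k+2)N+2\ge kN+2$ coordinates, which gives $\sigma^{r+n_2(y)}x\in U_{kN+2}(K)=K(k,N)\subset G_{k,N}$. If we had $s\ge kN$, then $n_2(y)\le kN-1<s$ would place the index $r+n_2(y)$ inside $[r,r+s)$, so the excursion hypothesis would force $\sigma^{r+n_2(y)}x\in B_{k,N}$, contradicting the disjointness $G_{k,N}\cap B_{k,N}=\emptyset$. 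Therefore $s<kN$.

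The main (mild) point to watch is the quantitative bookkeeping of neighbourhood scales: the radius $2(k+1)N+2$ in the definition of $\gamma_{N,kN}$ is chosen precisely so that a shift of size at most $kN$ still leaves agreement past the scale $kN+2$ used in the definition of $K(k,N)$. The observation that $y\notin K$, giving the strict inequality $n_2(y)\le kN-1$, is what upgrades the bound from $s\le kN$ to the sharp $s<kN$ stated in the lemma.
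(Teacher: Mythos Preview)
Your proof is correct and follows essentially the same line as the paper's: pick a point in $\sigma^r Q\subset \gamma_{N,kN}$, shadow it by some $y\in\alpha_{N,kN}$ at scale $2(k+1)N+2$, use $n_2(y)<kN$ to find an iterate landing in $G_{k,N}$, and conclude. The only cosmetic difference is that you track the iterate into $K(k,N)$ directly (via $\sigma^{n_2(y)}y\in K$), while the paper phrases the landing as being in $U_{(k+1)N+2}(\alpha_{\le N})\subset G_{k,N}$; your explicit observation that $y\notin K$ forces $n_1(y)\ge 1$ and hence $n_2(y)\le kN-1$ is exactly the mechanism behind the paper's ``therefore $n_2(x_0)<kN$''.
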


\begin{proof} Since the excursion does not enter  $\gamma_{>kN}$ we have that  $\sigma^rQ\subset \gamma_{N,kN}$.  Fix $x\in \sigma^{r}Q$.  By the definition of $\gamma_{N,kN}$ there exists  $x_0\in \alpha_{N,kN}$ such that $d(x,x_0)\le 2^{-(2(k+1)N+2)}$. Since $x_0 \in \alpha_{N,kN}$ we have that $n(x_0)\le kN$ and therefore $n_2(x_0)< kN$. In particular  $\sigma^t (x_0)\in \alpha_{\le N}$, for some $t\in [0,kN)$. Observe that $$d(\sigma^t (x),\sigma^t (x_0))\le  2^{-(2(k+1)N+2)+t}\le 2^{-((k+1)N+2)}.$$ This readily implies that $\sigma^t(x)\in U_{(k+1)N+2}(\alpha_{\le N})\subset G_{k,N}$. We conclude that $\sigma^{r+t} Q\subset G_{k,N}$, and therefore $s<kN$. 
\end{proof}

\begin{lemma}\label{lem:B}
 If  $Q\subset G_{k,N}$ then there exists $t \in [0, N)$ such that   $\sigma^tQ\subset K(k,N)$.
\end{lemma}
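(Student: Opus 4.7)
The plan is to split the argument according to which atom of $\beta_{k,N}'$ meets $Q$ in the zeroth coordinate. Since $\beta_{k,N}' = \Q_1(k,N) \cup \Q_2(k,N)$ is a partition of $G_{k,N}$ and $Q \in \beta_{k,N}^n$ is refined by $\beta_{k,N}$ at time $0$, the hypothesis $Q \subset G_{k,N}$ forces $Q$ to sit inside a single cylinder $C$ of either $\Q_1(k,N)$ or $\Q_2(k,N)$.

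In the first case $C \in \Q_1(k,N)$, so by construction $C \subset K(k,N)$, and the choice $t = 0$ works immediately. The substantive case is $Q \subset C$ with $C \in \Q_2(k,N)$. Here $C$ is a cylinder of length $(k+1)N+2$ coming from the minimal cover $\Q_2'(k,N)$ of $\alpha_{\le N}$, so $C \cap \alpha_{\le N} \neq \emptyset$; pick any $x_0$ in this intersection. Then $x_0 \in \mathcal{C}_n$ for some $n \in \{2,\ldots,N\}$, because $\mathcal{C}_0 = K$ is disjoint from $\alpha_{\le N}$ and $\mathcal{C}_1 = \emptyset$. In particular $x_0 \notin K$, so $n_1(x_0) \ge 1$, which forces
\[
n_2(x_0) \;\le\; n(x_0) - n_1(x_0) \;\le\; N - 1.
\]
Setting $t := n_2(x_0)$ therefore gives $t \in [0,N)$ with $\sigma^t x_0 \in K$.

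To finish, I would transport this return from $x_0$ to all of $Q$ using only the metric. Any $y \in Q \subset C$ lies in the same cylinder of length $(k+1)N+2$ as $x_0$, so $d(y,x_0) \le 2^{-((k+1)N+2)}$ and consequently
\[
d(\sigma^t y, \sigma^t x_0) \;\le\; 2^{-((k+1)N+2-t)} \;\le\; 2^{-(kN+3)} \;\le\; 2^{-(kN+2)},
\]
giving $\sigma^t y \in U_{kN+2}(K) = K(k,N)$, hence $\sigma^t Q \subset K(k,N)$ as required. The one delicate point, and the only place the conclusion could fail, is the strict bound $t \le N-1$: it is exactly the disjointness $K \cap \alpha_{\le N} = \emptyset$ (equivalently, $\mathcal{C}_1 = \emptyset$, which rules out $n(x_0) = 1$, while $n(x_0) = 0$ is excluded by $x_0 \in \alpha_{\le N}$) that prevents us from only obtaining $t = N$ and missing the open interval $[0,N)$; once that is in hand the argument reduces to a one-line metric estimate, so this disjointness is what the whole definition of the scales $kN+2$ and $(k+1)N+2$ is engineered to exploit.
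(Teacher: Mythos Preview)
Your proof is correct and follows essentially the same route as the paper's: reduce to the case $Q\subset\gamma_{\le N}$, pick a point of $\alpha_{\le N}$ in the same $(k{+}1)N{+}2$-cylinder, use its return time $n_2$ to $K$, and push this to all of $Q$ via the metric estimate. Your argument is in fact slightly more explicit than the paper's in two places: you justify why $Q$ lies in a single cylinder of $\beta_{k,N}'$ (the paper tacitly uses this when passing from a single $x$ to all of $Q$), and you spell out why the return time satisfies $t\le N-1$ strictly (via $n_1(x_0)\ge 1$), whereas the paper simply asserts $t<N$.
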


\begin{proof}
If $Q\subset K(k,N)$ there is nothing to prove. Assume that $Q\subset \gamma_{\le N}$. Let  $x\in Q$ and $y \in \alpha_{\le N}$ such that $d(x,y)\le 2^{-((k+1)N+2)}$. 
Since $y \in \alpha_{\le N}$ we have that $\sigma^t(y)\in K$, for some $t< N$. Observe that
\begin{equation*}
d(\sigma^t (x), \sigma^t (y))\le 2^{-((k+1)N+2)}2^{t}<2^{-(kN+2)}.
\end{equation*}
We conclude that there exists $t \in [0, N)$ such that $\sigma^t (x)\in U_{kN+2}(K)=K(k,N)$. This implies that  for some $t<N$ we have $\sigma^tQ\subset K(k,N)$.
\end{proof}

\begin{lemma}\label{lem:C}  If $[r,r+s)$ is an excursion of $Q$ into $\gamma_{>kN}$ such that $s\ge N$ then $\sigma^{r-1}Q\subset K(k,N)$.  
\end{lemma}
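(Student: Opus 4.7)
The plan is to argue by contradiction using the partition structure of $\beta_{k,N}$ together with Lemma~\ref{lem:B}. Since $Q\in \beta_{k,N}^n$ is an atom of the join $\bigvee_{i=0}^{n-1}\sigma^{-i}\beta_{k,N}$, the iterate $\sigma^{r-1}Q$ is contained in a single atom of $\beta_{k,N}$. The hypothesis that $[r,r+s)$ is an excursion gives $\sigma^{r-1}Q\subset G_{k,N}$, and the only atoms of $\beta_{k,N}$ meeting $G_{k,N}$ are the cylinders in $\beta_{k,N}'=\Q_1(k,N)\cup \Q_2(k,N)$ (the remaining two atoms $\gamma_{>kN}$ and $\gamma_{N,kN}$ are disjoint from $G_{k,N}$ by construction). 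Using that $\Q_1(k,N)$ covers $K(k,N)$, $\Q_2(k,N)$ covers $\gamma_{\le N}$, and $K(k,N)\cap \gamma_{\le N}=\emptyset$ (immediate from the definition $\gamma_{\le N}=U_{(k+1)N+2}(\alpha_{\le N})\setminus K(k,N)$), we extract the dichotomy: either $\sigma^{r-1}Q\subset K(k,N)$ or $\sigma^{r-1}Q\subset \gamma_{\le N}$.

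I would then suppose, for contradiction, that $\sigma^{r-1}Q\subset \gamma_{\le N}$. Applying Lemma~\ref{lem:B} to $\sigma^{r-1}Q\subset G_{k,N}$ yields some $t\in[0,N)$ with $\sigma^{r-1+t}Q\subset K(k,N)$. The value $t=0$ is ruled out since $\gamma_{\le N}\cap K(k,N)=\emptyset$, hence $t\in[1,N)$. The hypothesis $s\ge N$ then guarantees $r-1+t\in[r,r+s)$, so by the definition of the excursion $\sigma^{r-1+t}Q\subset \gamma_{>kN}$. This contradicts $\sigma^{r-1+t}Q\subset K(k,N)$, because $\gamma_{>kN}=\Sigma\setminus(G_{k,N}\cup \gamma_{N,kN})$ is disjoint from $K(k,N)\subset G_{k,N}$. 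Hence the other alternative of the dichotomy must hold, giving $\sigma^{r-1}Q\subset K(k,N)$.

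The only real step is noticing that the partition refinement places $\sigma^{r-1}Q$ inside a single atom of $\beta_{k,N}$ and extracting the $K(k,N)$-vs-$\gamma_{\le N}$ dichotomy; once that is in hand, the hypothesis $s\ge N$ is precisely what is needed so that the ``shift into $K(k,N)$'' supplied by Lemma~\ref{lem:B} lands inside the excursion window and produces the contradiction. I do not expect any real obstacle here beyond careful bookkeeping with the three disjoint regions $K(k,N)$, $\gamma_{\le N}$, $\gamma_{>kN}$.
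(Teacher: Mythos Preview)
Your proof is correct and follows essentially the same approach as the paper: assume for contradiction that $\sigma^{r-1}Q\subset\gamma_{\le N}$, and show this forces a return to $K(k,N)$ within fewer than $N$ steps, contradicting the excursion length $s\ge N$. The only cosmetic difference is that the paper reproves the content of Lemma~\ref{lem:B} inline (picking $y\in\alpha_{\le N}$ close to $x$ and tracking the distance under iteration), whereas you cite Lemma~\ref{lem:B} directly as a black box; your version is slightly cleaner for exactly this reason.
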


\begin{proof}   From the definition of an excursion, the set  $Q_0:=\sigma^{r-1}Q$ must lie in $G_{k, N}$, so to derive a contradiction we will assume that  $Q_0\subset \gamma_{\le N}$. Let  $x\in Q_0$. By the construction of $\gamma_{\le N}$  there exists $y \in \alpha_{\le N}$ such that $d(x,y)\le 2^{-((k+1)N+2)}$. Since $y\in \alpha_{\le N}$ there exists $t\le N$  such that $\sigma^t(y)\in K$. Therefore 
\begin{equation*}
d(\sigma^t(x),\sigma^t (y))\le 2^{-((k+1)N+2)+t}\le 2^{-(kN+2)}.
\end{equation*}
We conclude that $\sigma^t(x)\in U_{kN+2}(K)=K(k,N)$. This contradicts the fact the length of the excursion is larger than $N$. 
\end{proof}

\begin{definition}
Denote by  $m_{n,k,N}(Q)$ the number of  excursions of length  greater or equal to $kN$ into $B_{k,N}$ that enter $\gamma_{>kN}$  and let 
\begin{equation*}
E_{n,k,N} :=\# \left\{i \in[0,n): \sigma^i Q\subset B_{k,N} \right\}.
\end{equation*}
\end{definition}

The following result shows that  an atom $Q\in \beta_{k,N}^n$ such that $Q\subset K(k,N)\cap \sigma^{-(n-1)}K(k,N)$ can be covered by cylinders of length $n$ in a controlled way. This is an estimate closely related to \cite[Lemma 7.4]{ekp} (see also \cite[Proposition 4.5]{ve}). The constant $\hat\delta_\infty(K)$ naturally appears when we try to control the time spent in the `bad' part $B_{k,N}$. 

\begin{proposition} \label{prop:goodcover}   Let $\beta_{k,N}$  be the partition defined in  \eqref{def:beta}. Then an atom $Q\in \beta_{k,N}^n$ such that $Q\subset K(k,N)\cap \sigma^{-(n-1)}K(k,N)$, can be covered by at most 
\begin{equation*}
e^{E_{n,k,N}(Q)(\hat\delta_\infty(K)+\epsilon)}e^{m_{n,k,N}(Q)N(\hat\delta_\infty(K)+\epsilon)}
\end{equation*}
cylinders of length $n$.
\end{proposition}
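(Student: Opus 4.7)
My plan is to analyze the coordinate constraints on $Q\in\beta_{k,N}^n$ imposed by the sequence $(P_i)_{i=0}^{n-1}$ of atoms of $\beta_{k,N}$ with $\sigma^iQ\subseteq P_i$. The partition $\beta_{k,N}$ consists of cylinder atoms—those in $\Q_1(k,N)$ have length $kN+2$ and those in $\Q_2(k,N)$ have length $(k+1)N+2$—together with the two coarse atoms $\gamma_{N,kN}$ and $\gamma_{>kN}$, which carry no coordinate information. I would first decompose $[0,n)$ into stretches where $\sigma^iQ\subseteq G_{k,N}$ (cylinder atoms pin down coordinates) and B-excursions where $\sigma^iQ\subseteq B_{k,N}$ (coordinates to be enumerated). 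Since $Q\subseteq K(k,N)\cap\sigma^{-(n-1)}K(k,N)$, no excursion begins at $0$ or extends to time $n-1$, so every excursion $[r,r+s)$ has well-defined bracketing atoms $P_{r-1},P_{r+s}\in\beta_{k,N}'$.

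For a short B-excursion with $s<kN$, the preceding atom $P_{r-1}\in\beta_{k,N}'$ is a cylinder of length at least $kN+2$ and therefore already pins down the coordinates $x_{r-1},x_r,\ldots,x_{r+kN}$, which covers the entire excursion. Hence short excursions impose no additional choices on the cylinder cover of length $n$.

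For a long B-excursion $[r,r+s)$ with $s\ge kN$, Lemma~\ref{lem:A} forces the excursion to enter $\gamma_{>kN}$. Applying Lemma~\ref{lem:B} to $\sigma^{r-1}Q\subseteq G_{k,N}$ and using that $\sigma^{r-1+t}Q\subseteq B_{k,N}$ is disjoint from $K(k,N)$ for every $t\in[1,s)$, together with $s\ge kN\ge N$, I would deduce that $\sigma^{r-1}Q\subseteq K(k,N)$. A symmetric application of Lemma~\ref{lem:B} to $\sigma^{r+s}Q\subseteq G_{k,N}$ produces $t'\in[0,N)$ with $\sigma^{r+s+t'}Q\subseteq K(k,N)$. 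I would then associate to each long excursion the cylinder $[x_{r-1},x_r,\ldots,x_{r+s+t'}]$ of length $s+t'+2$ and argue that the number of such cylinders is bounded by $\hat z_{s+t'}(K)$, which in turn is at most $e^{(s+N)(\hat\delta_\infty(K)+\epsilon)}$ since $kN\ge N_0(\epsilon)$.

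Multiplying these bounds over all $m_{n,k,N}(Q)$ long excursions, and using that the total length of long excursions is at most $E_{n,k,N}(Q)$, yields the announced bound $e^{E_{n,k,N}(Q)(\hat\delta_\infty(K)+\epsilon)}\,e^{m_{n,k,N}(Q)N(\hat\delta_\infty(K)+\epsilon)}$. The hard part will be justifying that each extended cylinder lies in the class covered by $\hat z_{s+t'}(K)$. The endpoint conditions $x_{r-1},x_{r+s+t'}\in\{1,\ldots,a_0\}$ follow immediately from $K(k,N)\subseteq K_{a_0}$, but the witness condition $i(\sigma^k y)\le s+t'-k$ appearing in the definition of $\widehat T_{s+t'}(K)$ demands a delicate analysis exploiting both the monotonicity of $(a_i)_i$ and the location of the orbit relative to $K$ during and just after the excursion.
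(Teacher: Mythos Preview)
Your proposal is correct and follows essentially the same route as the paper. The paper organises the argument as an induction along the orbit, decomposing $[0,n)$ into segments $W_1\cup V_1\cup\cdots\cup V_s\cup W_{s+1}$ where the $V_i$ are $B_{k,N}$-excursions that enter $\gamma_{>kN}$, and then treats the two cases $|V_i|<kN$ and $|V_i|\ge kN$ separately; your short/long dichotomy for all $B_{k,N}$-excursions is an equivalent bookkeeping (Lemma~\ref{lem:A} forces any long $B_{k,N}$-excursion to enter $\gamma_{>kN}$, so the two decompositions match). One small point: to obtain $\sigma^{r-1}Q\subset K(k,N)$ the paper invokes Lemma~\ref{lem:C}, whereas you derive it directly from Lemma~\ref{lem:B}; your derivation is actually cleaner here since Lemma~\ref{lem:C} is phrased for excursions into $\gamma_{>kN}$ rather than $B_{k,N}$. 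For the $\widehat T_{s+t'}(K)$ inclusion you should take $t'$ \emph{minimal} (as the paper does), so that $\sigma^{r-1+j}Q\subset K(k,N)^c\subset K^c$ for all $j\in\{1,\ldots,s+t'\}$; the witness bound $i(\sigma^j x)\le s+t'-j$ then follows exactly as you anticipate, by combining $\sigma^{r+s+t'}(x)\in K(k,N)$ with the monotonicity of $(a_i)_i$.
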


\begin{proof}
To simplify notation we  drop the sub-indices $N$ and $k$. The proof of Proposition \ref{prop:goodcover} is by induction on $n$.  First decompose $[0,n-1]$ into
\begin{equation*}
 [0,n-1]=W_1\cup V_1\cup W_2\cup\cdots \cup V_s\cup W_{s+1},
 \end{equation*}
according to the  excursions into $B_{k,N}$ that contain at least one excursion into $\gamma_{>kN}$. More precisely, let $V_i=[m_i,m_i+h_i)$ and $W_i=[l_i,l_i+L_i)$ with $l_i+L_i=m_i$ and $m_i+h_i=l_{i+1}$.  The segment $V_i$ denotes an  excursion into $B_{k,N}$ that contains  an excursion into $\gamma_{>kN}$. Given $i\in \N$ define $J_i:= \sum_{j=1}^i |V_j|1_{[kN,\infty)}(|V_j|),$
where $1_{[kN,\infty)}$ is the characteristic function of the interval $[kN,\infty)$. Similarly define $H_i:= \sum_{j=1}^i 1_{[kN,\infty)}(|V_j|).$ Observe that $Q\subset K(k,N)$ implies that $Q$ is already contained in a cylinder of length $kN+2$.  \\

\emph{Step 1:} Assume that  $Q$ has been covered with $c_i$ cylinders of length $l_i$, where 
\begin{equation*}
c_i \le e^{J_i \left(\hat\delta_\infty(K)+\epsilon \right)}e^{NH_i \left(\hat\delta_\infty(K)+\e \right)}.
\end{equation*}

 (As mentioned above, the set  $Q$ is covered by one cylinder of length $1$, therefore take $c_1=1$.) We claim  that the same number  of cylinders of length $(l_i+L_i)$ cover $Q$. 
Observe that by hypothesis   $\sigma^{l_i}Q$ is contained in an element of $\beta'$, therefore $\diam(\sigma^{l_i}Q)\le 2^{-(kN+2)}$. Since the elements of $\beta'$ all have diameter smaller than $2^{-(kN+2)}$, the same holds if $Q$ spends some extra time in $\beta'$. By Lemma \ref{lem:A}, if $Q$ has an excursion into $B_{k,N}$ that does not enter $\gamma_{>kN}$, then it must come back to $\beta'$ before $kN$ iterates. In particular if the excursion into $B_{k,N}$ is $[p_i,p_i+q_i)$, then $q_i< kN$. Observe that  $\diam(\sigma^{p_i-1}Q)\le 2^{-(kN+2)},$ implies that $\diam(\sigma^{p_i+t}Q)\le 2^{-2},$ for every $t\in [0,kN)$. In particular the same holds for $t\in [0,q_i]$. Repeating this process we conclude that $\diam(\sigma^{t}Q)\le 2^{-2}$, for every $t\in [l_i,l_i+L_i)$. This immediately implies that $\sigma^{l_i}Q$ is contained in a cylinder of length $L_i$, which implies our claim. We go next to Step 2. 
\\

\emph{Step 2:}  Assume we have covered $Q$ with $c_i$ cylinders of length $m_i$, where 
\begin{equation*}
c_i\le  e^{J_i(\hat\delta_\infty(K)+\epsilon)}e^{NH_i(\hat\delta_\infty(K)+\e)}.
\end{equation*}
We want to estimate the number of cylinders of length $(m_i+h_i)$ needed to cover $Q$. Define $Q_i:=\sigma^{m_i-1}Q$. If we are able to cover $Q_i$ with $R$ cylinders of length $(h_i+1)$, then we will be able to cover $Q$ with $Rc_i$ cylinders of length $(m_i+h_i)$. We will separate into two cases:\\
\emph{Case 1}: $h_i< kN$.\\
Observe that $Q_i\subset G_{k,N}$   and is therefore contained in an element of $\beta'$, which implies $\diam(Q_i)\le 2^{-(kN+2)}$. This implies that $Q_i$ is contained in a cylinder of length $(kN+2)$. Since $h_i<kN$, this implies that $Q_i$ can be covered with one cylinder of length $(h_i+1)$.  We conclude that 
\begin{equation*}
c_{i+1}= c_i \le  e^{J_i(\hat\delta_\infty(K)+\epsilon)}e^{NH_i(\hat\delta_\infty(K)+\e)}=  e^{J_{i+1}(\hat\delta_\infty(K)+\epsilon)}e^{NH_{i+1}(\hat\delta_\infty(K)+\e)}.
\end{equation*}
\emph{Case 2}: $h_i\ge kN$.\\
By Lemma \ref{lem:C},  $Q_i=\sigma^{m_i-1}Q\subset K(k,N)$.  Observe that by assumption  $\sigma^{h_i+1}Q_i\subset \gamma_{\le N}$. By Lemma \ref{lem:B}   there exists $0\le t_i<N,$ such that $\sigma^{h_i+1+t_i}Q_i\subset K(k,N)$ (we assume $t_i$ is the smallest such number). 
We conclude that every $x\in Q_i$ satisfies $x\in K_{a_0}$, $\sigma^{h_i+1+t_i}(x)\in K_{a_0}$, and $\sigma^s x\in K^c$, for every $s\in \{1,\ldots, h_i+t_i\}$. In other words $Q_i\subset T_{h_i+t_i}(K)$. We now claim that $Q_i\subset \widehat{T}_{h_i+t_i}(K)$. Observe that if $x\in Q_i$, then $\sigma^{h_i+t_i+1}(x)\in K(k,N),$ and $\sigma^k(x)\in K(k,N)^c$, for every $k\in \{1,\ldots, h_i+t_i\}$. We argue by contradiction and suppose that $i(\sigma^k(x))>(h_i+t_i)-k$  for some $k\in \{1,\ldots, h_i+t_i\}$. This implies that $(\sigma^k(x))_j\le a_j$, for $j\in \{0,\ldots, h_i+t_i-k\}$. Observe that $(\sigma^k(x))_{h_i+t_i-k+j+1}=(\sigma^{h_i+t_i+1}(x))_{j}$, and  for $j\in\{0,\ldots,kN+1\}$ we have $(\sigma^{h_i+t_i+1}(x))_{j}\le a_j$. We conclude that $(\sigma^k(x))_{h_i+t_i-k+j+1}\le a_j$, for $j\in \{0,\ldots,kN\}$. In particular we have that $(\sigma^k(x))_j\le a_j$, for every $j\in \{0,\ldots,kN+1\}$, which contradicts that $\sigma^k(x)\in K(k,N)^c$, completing the proof of our claim. 
This implies,  from the definition of $\hat\delta_\infty(K)$, that $Q_i$ can be covered by at most $e^{(h_i+t_i)(\hat\delta_\infty(K)+\e)}$ cylinders of length $(h_i+1+t_i)$; and by at most $e^{(h_i+N)(\hat\delta_\infty(K)+\e)}$ cylinders of length $(h_i+1)$. We conclude that $Q$ can be covered by at most $c_{i+1}$ cylinders of length $(n_i+h_i)$, where 
\begin{align*}c_{i+1}\le & e^{(h_i+N)(\hat\delta_\infty(K)+\e)}(e^{J_i(\hat\delta_\infty(K)+\epsilon)}e^{NH_i(\hat\delta_\infty(K)+\e)})\\
&= e^{J_{i+1}(\hat\delta_\infty(K)+\epsilon)}e^{NH_{i+1}(\hat\delta_\infty(K)+\e)}.
\end{align*}
 Adding these steps together and noting that $J_s= E_{n,k,N}(Q)$ and $H_s= m_{n,k,N}(Q)$ completes the proof of the proposition.
\end{proof}

The idea now is to use Proposition \ref{prop:goodcover} to compare the entropy of a measure with the corresponding entropy of our partition $\beta_{k,N}$. This is a natural idea: the map $\mu\mapsto h_\mu(\beta_{k,N})$ is typically better behaved under sequences of measures; at this point we crucially use that the partition $\beta_{k,N}$ is finite. 

\begin{proposition}\label{prop:ineq} Let $\beta_{k,N}$ be the partition defined in \eqref{def:beta} and  $\mu$ an ergodic $\sigma$-invariant probability measure satisfying $\mu(K(k,N))>0$. Then
\begin{equation*}
h_{\mu}(\sigma)\le h_{\mu}(\beta_{k,N})+\left( \mu(B_{k,N}) + \frac{1}{k} \right)      (\hat\delta_\infty(K)+\epsilon).
\end{equation*}
\end{proposition}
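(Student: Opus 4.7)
The plan is to combine Katok's entropy formula (Theorem~\ref{katformula}) with Shannon--McMillan--Breiman applied to the finite partition $\beta_{k,N}$ and Birkhoff applied to $1_{B_{k,N}}$, producing a cylinder cover of length $n$ by first covering a ``good'' set by atoms of $\beta_{k,N}^n$ and then refining each such atom via Proposition~\ref{prop:goodcover}. Fix $\epsilon'>0$. I would first produce $N_0\in\N$ and a set $A$ with $\mu(A)>1-\mu(K(k,N))^2/8$ on which, for every $n\ge N_0$, one has $\mu(\beta_{k,N}^n(x))\ge e^{-n(h_\mu(\beta_{k,N})+\epsilon')}$ and $\tfrac{1}{n}\sum_{i=0}^{n-1}1_{B_{k,N}}(\sigma^i x)\le \mu(B_{k,N})+\epsilon'$. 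It is important that $K(k,N)$ and $B_{k,N}$ are each finite unions of atoms of $\beta_{k,N}$: this lets me translate pointwise Birkhoff data on $x\in Q$ into the combinatorial quantity $E_{n,k,N}(Q)$, and convert the containment $Q\cup\sigma^{n-1}Q\subset K(k,N)$ into the condition that $Q$ meet $K(k,N)\cap \sigma^{-(n-1)}K(k,N)$.

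The hardest step is the endpoint condition required by Proposition~\ref{prop:goodcover}: there is no a priori uniform lower bound on $\mu(K(k,N)\cap \sigma^{-(n-1)}K(k,N))$ as $n\to\infty$. I resolve this via the mean ergodic theorem applied to $1_{K(k,N)}\in L^2(\mu)$: the Cesaro average $\tfrac{1}{n}\sum_{j=1}^n \mu(K(k,N)\cap \sigma^{-(j-1)}K(k,N))$ converges to $\mu(K(k,N))^2>0$, so there is a subsequence $(n_j)_j$ along which $\mu(K(k,N)\cap\sigma^{-(n_j-1)}K(k,N))\ge \tfrac{1}{2}\mu(K(k,N))^2$. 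Setting $D_{n_j}:=A\cap K(k,N)\cap \sigma^{-(n_j-1)}K(k,N)$ then gives $\mu(D_{n_j})\ge c:=\tfrac{1}{4}\mu(K(k,N))^2>0$ for all large $j$.

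With this subsequence I would assemble the cover. By SMB, $D_{n_j}$ is covered by at most $e^{n_j(h_\mu(\beta_{k,N})+\epsilon')}$ atoms $Q\in\beta_{k,N}^{n_j}$, each satisfying $Q\cup \sigma^{n_j-1}Q\subset K(k,N)$ so that Proposition~\ref{prop:goodcover} applies. The key combinatorial input is that every excursion of length $\ge kN$ contributes at least $kN$ to $E_{n_j,k,N}(Q)$, so $m_{n_j,k,N}(Q)\cdot N\le E_{n_j,k,N}(Q)/k$. Together with the Birkhoff bound $E_{n_j,k,N}(Q)\le n_j(\mu(B_{k,N})+\epsilon')$ and $\mu(B_{k,N})\le 1$, this yields
\begin{equation*}
E_{n_j,k,N}(Q)+m_{n_j,k,N}(Q)\,N\le n_j\bigl(\mu(B_{k,N})+\tfrac{1}{k}+2\epsilon'\bigr).
\end{equation*}
Multiplying the atom count by the per-atom bound from Proposition~\ref{prop:goodcover} yields a cover of $D_{n_j}$ by at most $\exp\!\bigl(n_j[h_\mu(\beta_{k,N})+\epsilon'+(\mu(B_{k,N})+\tfrac{1}{k}+2\epsilon')(\hat\delta_\infty(K)+\epsilon)]\bigr)$ cylinders of length $n_j$.

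To conclude, choose any $\delta\in(1-c,1)$; since $\mu(D_{n_j})>1-\delta$, the above quantity bounds $N_\mu(n_j,1,\delta)$. Lemma~\ref{lem:katokine} gives $h_\mu(\sigma)\le \liminf_n \tfrac{1}{n}\log N_\mu(n,1,\delta)$, and this liminf is bounded above by the subsequential bound along $(n_j)_j$, producing
\begin{equation*}
h_\mu(\sigma)\le h_\mu(\beta_{k,N})+\epsilon'+(\mu(B_{k,N})+\tfrac{1}{k}+2\epsilon')(\hat\delta_\infty(K)+\epsilon).
\end{equation*}
Letting $\epsilon'\to 0$ gives the claimed inequality.
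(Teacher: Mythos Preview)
Your proposal is correct and follows essentially the same route as the paper: produce a subsequence along which $\mu\bigl(K(k,N)\cap\sigma^{-(n-1)}K(k,N)\bigr)$ stays bounded below, intersect with the Shannon--McMillan--Breiman and Birkhoff good sets, cover by atoms of $\beta_{k,N}^n$, refine each atom via Proposition~\ref{prop:goodcover}, and feed the resulting cylinder count into Katok's inequality, then let $\epsilon'\to 0$. The only cosmetic differences are that you invoke the mean ergodic theorem explicitly (the paper simply asserts the existence of such a subsequence), and you bound $m_{n,k,N}(Q)\,N$ via $E_{n,k,N}(Q)/k$ whereas the paper bounds $m_{n,k,N}(Q)\le n/(kN)$ directly; both yield the same final inequality.
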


\begin{proof}
 To simplify notation we denote  the partition $\beta_{k,N}$ by  $\beta$. We will apply Theorem \ref{katformula}, so the main task is to estimate $N_\mu(n,1,\delta)$ for some $\delta\in (0, 1)$.
Since $\mu$ is an ergodic measure such that $\mu(K(k,N))>0$ there exists $\delta_1>0$ and an increasing  sequence $(n_i)_{i}$ satisfying
\begin{equation*}
\mu(K(k,N)\cap \sigma^{-n_i}K(k,N)) > \delta_1,
\end{equation*}
for every $i\in \N$. Given $\e_1>0$, by the Shannon-McMillan-Breiman theorem  the set  
\begin{equation*}
\D_{\e_1,N}= \left\{x\in X : \forall n\geq N, \mu(\beta^n(x))\geq \exp(-n(h_\mu(\beta)+\e_1)) \right\},
\end{equation*}
satisfies
\begin{equation*}
\lim_{N \to \infty} \mu \left(	\D_{\e_1,N}\right) =1. 
 \end{equation*}
By Birkhoff's Ergodic Theorem there exists a set $W_{\e_1} \subset \Sigma$  satisfying $\mu(W_{\e_1})>1-\frac{\delta_1}{4}$ and $n(\e_1) \in \N$ such that
for every $x\in W_{\e_1}$ and $n\ge n(\e_1)$,
\begin{equation*}
\frac{1}{n}\sum_{i=0}^{n-1} 1_{B_{k,N}}(\sigma^n x) < \mu(B_{k,N})+\e_1.
\end{equation*}
Define
\begin{equation*}
X_i:= W_{\e_1}\cap  \D_{\e_1,n_i}\cap K(k,N)\cap \sigma^{-n_i}K(k,N).
\end{equation*}
So for  sufficiently large values of $i \in \N$, by construction we have that  $\mu(X_i)>\frac{\delta_1}{2}$. In what follows we will assume that $i \in \N$ is large enough that it satisfies this condition.

By  definition of $\D_{\e_1,n_i}$ the set  $X_i$ can be covered by $\exp(n_i(h_\mu(\beta)+\e_1))$ many elements of $\beta^{n_i}$. 
We will make use of Proposition \ref{prop:goodcover} to efficiently  cover each of those atoms by cylinders.  Let $Q\in \beta^{n_i}$ be an atom intersecting $X_i$. In particular $Q\in K\cap\sigma^{-(n-1)}K$.
It follows from the definition of $W_{\e_1}$ that
\begin{equation*}
E_{n_i,k,N}(Q) <\left(\mu(B_{k,N})+\e_1 \right)n_i.
\end{equation*}
Moreover,
\begin{equation*}
m_{n_i,k,N}(Q)\le \frac{1}{kN}n_i. 
\end{equation*}
Indeed, each of the excursions counted in $m_{n_i,k,N}$ has length at least $kN$, which implies that the number of excursions can not be larger than $\frac{1}{kN}n_i$.  
Therefore Proposition \ref{prop:goodcover} implies that 
\begin{equation*}
N_\mu\left(n_i, 1,1-\frac{\delta_1}{2} \right) \leq
e^{n_i(h_\mu(\beta)+\e_1)}e^{n_i(\hat\delta_\infty(K)+\epsilon)(\mu(B_{k,N})+\e_1)}e^{\frac{1}{kN}n_iN(\hat\delta_\infty(K)+\e)}.
\end{equation*}
It now follows from Katok's entropy formula (see Theorem \ref{katformula}) that
\begin{equation*}
h_\mu(\sigma) \le  h_\mu(\beta_{k,N})+\e_1+(\hat\delta_\infty(K)+\e)(\mu(B_{k,N})+\e_1)+\frac{1}{k}(\hat\delta_\infty(K)+\e).
\end{equation*}
Since $\epsilon_1>0$ was arbitrary the proof is complete.  
\end{proof}

As in Proposition \ref{prop:ineq} we denote the partition $\beta_{k,N}$ by  $\beta$. We may assume, possibly after refining the partition,  that   
$$\beta=\{C^1,\ldots,C^q,R\},$$ where each $C^i$ is a cylinder for the original partition and $R=\gamma_{>kN}$ is the complement of a finite collection of cylinders. 
For simplicity we still denote this partition by  $\beta$.
We emphasise that Proposition \ref{prop:ineq} still holds for this new partition.

Define, for large $m$,  $F_m:=\bigcap_{i=0}^{m-1}\sigma^{-i} R$. 
We will require the following continuity result.  

\begin{proposition}\label{prop:atom} Suppose that $(\mu_n)_n$ is a sequence of ergodic probability measures converging on cylinders to an invariant measure $\mu$, where $\mu(\Sigma)>0$.  For every $P\in \beta^m\setminus \{F_m\},$ we have  
\begin{equation*}
\lim_{n\to\infty}\mu_n(P)=\mu(P).
\end{equation*}
\end{proposition}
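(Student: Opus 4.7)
The plan is to exploit the fact that $P\neq F_m$ forces $P$ to sit inside a shifted cylinder, which ``pins'' one coordinate and blocks escape of mass from contaminating $\mu_n(P)$ in the limit. Since $P\neq F_m$, there exist $i_0\in\{0,\ldots,m-1\}$ and $j_0\in\{1,\ldots,q\}$ with $A_{i_0}=C^{j_0}$, so $P\subseteq \sigma^{-i_0}C^{j_0}$. By $\sigma$-invariance of $\mu_n$ and the hypothesis of convergence on cylinders,
\[
\mu_n(\sigma^{-i_0}C^{j_0})=\mu_n(C^{j_0})\to\mu(C^{j_0})=\mu(\sigma^{-i_0}C^{j_0}),
\]
so the enveloping set has an exact limit. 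Moreover, both $P$ and $\sigma^{-i_0}C^{j_0}\setminus P$ depend only on the first $N:=m+L-1$ coordinates (with $L=\max_j\ell(C^j)$), and are therefore at most countable disjoint unions of cylinders of length $N$.

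For each such cylinder $[w]$ the hypothesis gives $\mu_n([w])\to\mu([w])$. Applying Fatou's lemma to these countable sums I would obtain
\[
\liminf_{n\to\infty}\mu_n(P)\ge \mu(P),\qquad \liminf_{n\to\infty}\mu_n(\sigma^{-i_0}C^{j_0}\setminus P)\ge \mu(\sigma^{-i_0}C^{j_0}\setminus P).
\]
Combining the second bound with the disjoint decomposition $\mu_n(P)=\mu_n(\sigma^{-i_0}C^{j_0})-\mu_n(\sigma^{-i_0}C^{j_0}\setminus P)$ and passing to the limit in $n$, I would then get
\[
\limsup_{n\to\infty}\mu_n(P)\le \mu(\sigma^{-i_0}C^{j_0})-\mu(\sigma^{-i_0}C^{j_0}\setminus P)=\mu(P),
\]
and sandwiching with the first Fatou bound yields $\mu_n(P)\to\mu(P)$.

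The main obstacle is that $P$ is typically an \emph{infinite} countable union of cylinders, because the coordinates at $R$-positions may take arbitrarily large values; hence pointwise convergence of each $\mu_n([w])$ cannot simply be summed, and in the absence of further input $\limsup_n\mu_n(P)$ could in principle strictly exceed $\mu(P)$ due to escape of mass. The Fatou bound on the complement $\sigma^{-i_0}C^{j_0}\setminus P$, together with the exact limit on the enveloping shifted cylinder (itself obtained from invariance and convergence on cylinders), is what converts the one-sided Fatou bound on $P$ into a two-sided limit. This also clarifies why the atom $F_m$ must be excluded from the statement: in that case no pinning coordinate is available, the enveloping cylinder trick collapses, and escaping mass genuinely contributes to $\mu_n(F_m)$ in the limit.
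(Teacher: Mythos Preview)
Your argument is correct and, in fact, cleaner and more general than the paper's. The paper proceeds by first proving (Claim~\ref{claim:preatom}) that any finite intersection $H_0\cap\sigma^{-p_1}H_1\cap\cdots\cap\sigma^{-p_k}H_k$ of shifted cylinders is a \emph{finite} union of cylinders; this relies on the $\F$-property (only finitely many admissible words of a given length connect two fixed symbols). With that in hand, the paper writes $P$ as a difference of sets each of which is a finite union of cylinders, so convergence on cylinders transfers termwise. The case where the initial block of $P$ lies in $R$ is then peeled off inductively and reduced to the previous case.

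Your route is different: you never need the intersections to be \emph{finite} unions. By pinning one coordinate via $P\subset\sigma^{-i_0}C^{j_0}$, you get an exact limit for the enveloping set from invariance plus convergence on cylinders, and then Fatou on the two countable cylinder decompositions of $P$ and $\sigma^{-i_0}C^{j_0}\setminus P$ sandwiches $\mu_n(P)$. This bypasses Claim~\ref{claim:preatom} entirely and hence does not use the $\F$-property at this step. That is noteworthy, since the paper singles out Proposition~\ref{prop:atom} (alongside Lemma~\ref{lem:Kineq2}) as a place where the $\F$-property is ``crucially used'' (see the proof of Proposition~\ref{prop:iff}); your argument shows it is in fact dispensable here. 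Two minor remarks: you should introduce the notation $A_i$ explicitly (writing $P=\bigcap_{i=0}^{m-1}\sigma^{-i}A_i$ with $A_i\in\beta$), and neither ergodicity of the $\mu_n$ nor $\mu(\Sigma)>0$ is used in your proof---nor, indeed, in the paper's.
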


\begin{proof} In order to prove the proposition we will need the following fact. 

\begin{claim}\label{claim:preatom} Let  $(H_i)_i$ be a collection of cylinders and $(p_i)_i$ a sequence of natural numbers. Then $H_0\cap \sigma^{-p_1}H_1\cap \cdots\cap \sigma^{-p_k}H_k$, is either a finite collection of cylinders,  or the empty set. 
 \end{claim}

\begin{proof} We begin with the case $k=2$, in other words, we will prove that  if $C$ and $D$ are cylinders, then for every $p\in \N$ the set $C\cap \sigma^{-p}D$ is a finite collection of cylinders or the empty set.  If the length of $C$ is larger than or equal to $p$ then $C\cap \sigma^{-p}D$ is empty or a cylinder. If $p$ is larger than the length of $C$, then we use that there are only finitely many admissible words of given length connecting two fixed symbols. More precisely, if $C=[x_0,\ldots,x_{h-1}]$ and $D=[y_0,\ldots,y_{t-1}]$, then there are finitely many admissible words of length $p-h+2$ connecting $x_{h-1}$ and $y_0$. We conclude that $C\cap \sigma^{-p}D$ is a finite collection of cylinder  or the empty set. The same argument gives us the proof of the claim for arbitrary $k$. 
\end{proof}

Let $P=S_0\cap \sigma^{-1}S_1\cap\cdots\cap\sigma^{-(m-1)}S_{m-1}$, where $S_i\in \beta$ and  $P_k:=\bigcap_{i=k}^{m-1}\sigma^{-(i-k)}S_i$. Define $B=B(P):=\{i\in\{0,\ldots,m-1\}: S_i=R\}$, $G=G(P):=\{0,\ldots,m-1\}\setminus B$, and $k=k(P):=(\min G)-1$. By assumption we know that $G\ne \emptyset$. Let $Q_0=Q_0(P):=\bigcup_{i=0}^k \sigma^{-i}R$, $Q_1=Q_1(P):=\bigcap_{i\in G} \sigma^{-i}S_i$, and $Q_2=Q_2(P):=\bigcap_{i\in B\cap (k,\infty)}\sigma^{-i}S_i$. We will first consider the case $k=-1$, where $Q_0=\emptyset$. 

\begin{claim}\label{claim:-1} Let $P=\bigcap_{i=0}^{m-1}\sigma^{-i}S_i$, where $S_0\in \{C^1,\ldots,C^q\}$. Then $$\lim_{n\to\infty} \mu_n(P)=\mu(P).$$\end{claim}

\begin{proof}
Since $Q_1$ is the disjoint union of $P=(Q_1\cap Q_2)$ and $(Q_1\cap Q_2^c)$, for every $n\in \N$ we obtain that 
\begin{equation*}
\mu_n(P)=\mu_n(Q_1)-\mu_n(Q_1\cap Q_2^c).
\end{equation*}
Observe that  
\begin{align*} Q_1\cap Q_2^c&= \left(\bigcap_{j\in G}\sigma^{-j}S_j \right) \cap \left(\bigcup_{i\in B} \sigma^{-i}R^c \right)=\bigcup_{i\in B} \left(\sigma^{-i}R^c\cap \bigcap_{j\in G}\sigma^{-j}S_j \right).
\end{align*}
From  Claim \ref{claim:preatom} we conclude that for every $i\in B$ the sets $Q_1$ and $(\sigma^{-i}R^c\cap \bigcap_{j\in G}\sigma^{-j}S_j)$ are a finite union of cylinders or the empty set. Therefore,  $Q_1$ and $Q_1\cap Q_2^c$ are a finite union of cylinders, or the empty set. From this we immediately obtain that 
\begin{align*}\lim_{n\to\infty}\mu_n(P)&=\lim_{n\to\infty}\mu_n(Q_1)-\lim_{n\to\infty}\mu_n(Q_1\cap Q_2^c)=\mu(Q_1)-\mu(Q_1\cap Q_2^c)=\mu(P),\end{align*}
which proves the claim.
\end{proof}

We now explain how to reduce the case $k\ge 0$ to Claim \ref{claim:-1}. Observe that $P=R\cap \sigma^{-1}P_1$, therefore $\sigma^{-1}P_1$ is the disjoint union between $P$ and $S_1:=(R^c\cap\sigma^{-1}P_1)=\bigcup_{i=1}^q(C^i\cap \sigma^{-1}P_1)$. Thus,
\begin{align*} \mu_n(P)&=\mu_n(\sigma^{-1}P_1)- \mu_n(R^c\cap\sigma^{-1}P_1)=\mu_n(P_1)-\sum_{i=1}^q \mu_n(C^i\cap\sigma^{-1}P_1).
\end{align*}
By Claim \ref{claim:-1} we know that $\lim_{n\to\infty}\mu_n(C^i\cap\sigma^{-1}P_1)=\mu(C^i\cap\sigma^{-1}P_1)$. Therefore it suffices to prove that $\lim_{n\to\infty}\mu_n(P_1)=\mu(P_1)$.  Applying the above argument $k$ times we obtain that  the original problem is reduced to $\lim_{n\to\infty}\mu_n(P_{k+1})=\mu(P_{k+1})$
Since $P_{k+1}=S_{k+1}\cap \sigma^{-1}P_{k+2}$, where $S_{k+1}\in \{C^1,\ldots,C^q\}$,  we conclude the proof of the proposition by applying Claim \ref{claim:-1}. 
\end{proof}

\begin{proof}[Proof of Theorem~\ref{pre}] 
We first consider the case in which not all the mass escapes, that is, we assume that $\mu(\Sigma)>0$. Let $\varepsilon_0>0$.
Choose $m \in \N$ sufficiently large such that 
\begin{equation*}
h_{\frac{\mu}{|\mu|}}(\sigma)+\varepsilon_0>\frac{1}{m}H_{\frac{\mu}{|\mu|}}(\beta^{m})\quad , \quad 2\frac{e^{-1}}{m}<\frac{\varepsilon_0}{2} \quad and \quad -\left(\frac1m\right)\log |\mu|<\varepsilon_0.
\end{equation*}
Then 
\begin{equation*}
 h_{\frac{\mu}{|\mu|}}(\sigma)+\varepsilon_0>
 \frac{1}{|\mu|}\frac1m \left(\log|\mu|- \sum_{P\in \beta^m} \mu(P)\log\mu(P)\right)
\end{equation*}
and hence
\begin{equation*}
|\mu| h_{\frac{\mu}{|\mu|}}(\sigma)+2\varepsilon_0>-\frac{1}{m}\sum_{P\in \beta^m} \mu(P)\log\mu(P).
\end{equation*}
It follows from Proposition \ref{prop:atom}  that
\begin{equation*} 
\lim_{n\to\infty} \sum_{Q\in \beta^{m}\setminus\{F_m\}}\mu_n(Q)\log\mu_n(Q) = \sum_{Q\in \beta^{m}\setminus\{F_m\}}\mu(Q)\log\mu(Q).
\end{equation*}
For sufficiently large $n \in \N$  we have the inequality 
\begin{equation*}
|\mu|h_{\frac{\mu}{|\mu|}}(\sigma)+3\varepsilon_0\ge \frac{1}{m}H_{\mu_n}(\beta^m).
\end{equation*}
By Proposition \ref{prop:ineq}, we have that 
\begin{align*}
|\mu| h_{\frac{\mu}{|\mu|}}(\sigma)+3\varepsilon_0 \geq & \frac{1}{m}H_{\mu_n}(\beta^{m})\geq h_{\mu_n}(\sigma,\beta)\\
\ge & h_{\mu_n}(\sigma)-(\hat\delta_\infty(K)+\e)\mu_n(B_{k,N})-\frac{1}{k}(\hat\delta_\infty(K)+\e).
\end{align*}
Since $\varepsilon_0>0$ is arbitrary we get
\begin{align}\label{forA} \limsup_{n\to\infty} h_{\mu_n}(\sigma)\leq  |\mu| h_{\frac{\mu}{|\mu|}}(\sigma)+(\hat\delta_\infty(K)+\e)(1-\mu(G_{k,N})) +\frac{1}{k}(\hat\delta_\infty(K)+\e).
\end{align}
We stress  that Proposition \ref{prop:ineq} can be applied  for arbitrary $k, N \in \N$ since $K\subset \supp(\mu_n)$ and therefore $\mu_n(K(k,N))>0$.  
Finally, letting  $k\to \infty$ and $\e\to 0$ we obtain the inequality
\begin{equation*}
\limsup_{n\to \infty} h_{\mu_n}(\sigma)\le |\mu|h_{\mu/|\mu|}(\sigma)+\left(1-\sup_{k,N}\mu(G_{k,N}) \right)\hat\delta_\infty(K).
\end{equation*}
Observe that $Y\subset \bigcup_{k,N} G_{k,N}$, therefore $\mu(Y)\le \mu \left(\bigcup_{k,N} G_{k,N} \right)=\sup_{k,N}\mu(G_{k,N})$.
We conclude that 
\begin{equation*}
\limsup_{n\to \infty} h_{\mu_n}(\sigma)\le |\mu|h_{\mu/|\mu|}(\sigma)+(1-\mu(Y))\hat\delta_\infty(K).
\end{equation*}
The case  $\mu(\Sigma)=0$ follows directly from Proposition \ref{prop:ineq} since $h_{\mu_n}(\sigma,\beta)\to 0$ and $\mu_n(B_{k,N})\to 1$ as $n\to \infty$.
\end{proof}

\section{Proof of  Theorem \ref{thm:main}}\label{finalproof}
In this section we prove our main result. We start with a simple result we will need later.

\begin{lemma}\label{lem:Kineq} 
Let $(a_j)_j$ and $(b_j)_j$ be sequences of natural numbers such that for every $i\in\N_0$ we have  $a_0=b_0$ and $a_j\le b_j$. Then $\hat\delta_\infty(K((b_j)_j))\le \hat\delta_\infty(K((a_j)_j)).$
\end{lemma}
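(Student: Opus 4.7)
The plan is to establish a pointwise inclusion $\widehat{T}_n(K((b_j)_j)) \subseteq \widehat{T}_n(K((a_j)_j))$ for every $n$, which immediately gives $\hat z_n(K((b_j)_j)) \leq \hat z_n(K((a_j)_j))$ (any cover by cylinders of length $n+2$ of the larger set covers the smaller one), and passing to $\limsup_{n\to\infty}\frac1n\log(\cdot)$ yields the conclusion.

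To set this up, write $K_a := K((a_j)_j)$ and $K_b := K((b_j)_j)$. Because $\{1,\ldots,a_j\}\subseteq\{1,\ldots,b_j\}$ for every $j$, we have $K_a\subseteq K_b$ and hence $K_b^c\subseteq K_a^c$. Crucially, the hypothesis $a_0=b_0$ makes the cylinder set $K_{a_0}=\bigcup_{i=1}^{a_0}[i]$ the same in both definitions of $T_n(\cdot)$ (see \eqref{def:T}). Combining these two observations with the definition of $T_n$, if $x\in T_n(K_b)$ then $x,\sigma^{n+1}(x)\in K_{a_0}$ and $\sigma^k(x)\in K_b^c\subseteq K_a^c$ for $k\in\{1,\ldots,n\}$, so $x\in T_n(K_a)$. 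This gives $T_n(K_b)\subseteq T_n(K_a)$.

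Next I handle the condition involving the function $i$ of \eqref{def:i}, which depends on the chosen sequence. Write $i_a$ and $i_b$ for the functions associated to $(a_j)_j$ and $(b_j)_j$ respectively. For any $y\in K_b^c$ one has $y\in K_a^c$, and since $a_m\le b_m$ the set $\{m:y_m>b_m\}$ is contained in $\{m:y_m>a_m\}$, so $i_a(y)\le i_b(y)$. Now take $x\in \widehat{T}_n(K_b)$. For $k\in\{1,\ldots,n\}$ we already know $\sigma^k(x)\in K_b^c$, so the inequality just derived gives
\begin{equation*}
i_a(\sigma^k(x))\le i_b(\sigma^k(x))\le n-k.
\end{equation*}
Combined with $x\in T_n(K_a)$ from the previous paragraph, this shows $x\in \widehat{T}_n(K_a)$, proving the claimed inclusion.

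The only potential subtlety is keeping track of which sequence determines $i$, $K_{a_0}$, and $K^c$ in each occurrence of the definition; once that is done carefully the argument is a direct monotonicity computation with no real obstacle. Taking $\limsup$ in $\hat z_n(K_b)\le\hat z_n(K_a)$ then yields $\hat\delta_\infty(K_b)\le\hat\delta_\infty(K_a)$, as required.
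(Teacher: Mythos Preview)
Your proof is correct and follows essentially the same route as the paper: both establish the inclusion $\widehat{T}_n(K_b)\subseteq\widehat{T}_n(K_a)$ by first noting $K_b^c\subseteq K_a^c$ (with $K_{a_0}$ unchanged since $a_0=b_0$) to get $T_n(K_b)\subseteq T_n(K_a)$, and then using $i_a(y)\le i_b(y)$ on $K_b^c$ to handle the $\widehat{T}_n$ constraint. If anything, your version is slightly more explicit about why the hypothesis $a_0=b_0$ is needed.
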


\begin{proof} Denote by $K_1:=K((a_j)_j)$ and $K_2:=K((b_j)_j)$. Recall that associated to each compact set defined in this way there is a function $i$ (see \eqref{def:i} for the definition).  Denote the function $i$ associated to $K_1$ (resp. $K_2$)  by $i_1$ (resp. $i_2$). It follows from the hypothesis that $K_1\subset K_2$. In particular we have that $K_2^c\subset K_1^c$ and therefore $T_n(K_2)\subset T_n(K_1)$ (see \eqref{def:T} for the definition of $T$). Moreover, we have that
\begin{equation*}
\widehat{T}_n(K_2)\subset \widehat{T}_n(K_1),
\end{equation*}
(see \eqref{def:That} for the definition of $\widehat{T}$). Indeed, let  $x\in \widehat{T}_n(K_2)$, we have that $i_2(\sigma^k(x))\le n-k$. In particular 
\begin{equation*}
(\sigma^k(x))_{i_2(\sigma^k(x))}>b_{i_2(\sigma^k(x))}\ge a_{i_2(\sigma^k(x))}.
\end{equation*}
We conclude that $i_1(\sigma^k(x))\le i_2(\sigma^k(x))\le n-k$, and therefore $x\in \widehat{T}_n(K_1)$. Thus $\widehat{T}_n(K_2)\subset \widehat{T}_n(K_1)$, which readily implies that for every $n \in \N$ we have $\hat z_n(K_2)\le \hat z_n(K_1)$. 
\end{proof}

In the next lemma we establish a relation  between the quantities $\hat\delta_\infty(K)$ and $\delta_\infty(q)$, which in turn is necessary to relate Theorem \ref{pre} with Theorem \ref{thm:main}.  As mentioned before, in the definition of $\hat\delta_\infty(K)$ we covered the sets $\widehat{T}_n(K)$ (and not $T_n(K)$ which may seem more natural) in order to ensure this result.

\begin{lemma}\label{lem:Kineq2} Let  $(\Sigma, \sigma)$ be a CMS satisfying the $\F-$property, and $M , q  \in  \N$. Then  there exists a sequence of natural numbers $(a_i)_i$ such that $a_0=q$, and 
\begin{equation*}
\hat\delta_\infty(K)\le \delta_\infty(M,q),
\end{equation*}
where $K=K((a_i)_i)$
\end{lemma}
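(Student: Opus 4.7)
The goal is to produce a sequence of natural numbers $(a_i)_{i\ge 0}$ with $a_0 = q$ so that every length $(n+2)$-cylinder contained in $\widehat{T}_n(K)$ is also one of those counted by $z_n(M,q)$. First I would invoke the $\F-$property, combined with the standing transitivity assumption of the paper, to observe that for each $L\in\N$ the collection of admissible words of length $L$ with both endpoints in $\{1,\ldots,q\}$ is finite; let $V_L$ denote the maximum symbol appearing in such a word. With the $V_L$ in hand, one sets $a_0 := q$ and, for $i\ge 1$, chooses $a_i$ to be increasing and to dominate $V_L$ at the appropriate scale---a concrete choice being $a_i := \max\{a_{i-1},\, V_{iM+1}\}$.

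The endpoint condition $c_0,c_{n+1}\le q$ required by $z_n(M,q)$ follows at once from $a_0=q$ together with the inclusion $\widehat{T}_n(K)\subset T_n(K)\subset K_{a_0}\cap \sigma^{-(n+1)}K_{a_0}$. The heart of the argument is the cardinality bound $\#\{i\in\{0,\ldots,n+1\}: c_i\le q\}\le (n+2)/M$ for every such cylinder. I would argue by contradiction, assuming $|L|>(n+2)/M$ for $L:=\{i: c_i\le q\}$. A pigeonhole argument on $M$-blocks shows that $L$ must contain two consecutive elements at distance at most $M-1$, and by propagating this observation one obtains a chain of consecutive elements of $L$ whose gaps are all at most $M-1$ and which reaches the right endpoint $n+1$. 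By the $\F-$property every symbol inside this chain is bounded by some $V_L$, hence by the corresponding $a_j$. Applying the ``early first escape'' condition $i(\sigma^k x)\le n-k$ with $k$ the leftmost index of the chain then yields a contradiction, since the first witness position $x_{k+j(k)}>a_{j(k)}$ cannot lie inside the chain and there is no room for it beyond $n$.

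With this verified, every cylinder of $\widehat{T}_n(K)$ is counted by $z_n(M,q)$, so $\hat z_n(K)\le z_n(M,q)$, and taking $\limsup_{n\to \infty}\tfrac{1}{n}\log$ of both sides gives $\hat\delta_\infty(K)\le \delta_\infty(M,q)$.

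The principal obstacle is the combinatorial step bounding $|L|$: the chain-push argument must be iterated carefully, and the growth rate of $(a_i)$ must be matched simultaneously to $M$ and to the asymptotics of the bounds $V_L$ coming from the $\F-$property, so that the early-escape contradiction is forced precisely once $|L|$ exceeds the threshold $(n+2)/M$.
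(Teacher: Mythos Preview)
Your overall plan matches the paper's: choose $(a_i)$ using the $\F$-property so that every $(n+2)$-cylinder meeting $\widehat T_n(K)$ is among those counted by $z_n(M,q)$, and then take $\limsup$. The gap is in the combinatorial core. From $|L|>(n+2)/M$, pigeonhole yields one close pair in $L$, but there is no mechanism to ``propagate'' this to a chain with gaps at most $M-1$ reaching $n+1$. As a purely combinatorial claim about subsets $L\subset\{0,\dots,n+1\}$ containing $0$ and $n+1$ it is simply false: for large $n$ the set $L=\{0,1,\dots,\lceil(n+2)/M\rceil\}\cup\{n+1\}$ has the required cardinality, yet its only element within distance $M-1$ of $n+1$ is $n+1$ itself, so no admissible chain starts in $\{1,\dots,n\}$. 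You invoke the $\widehat T_n$ constraint only \emph{after} the chain is built, and at a single index $k$; but it is precisely this constraint, applied at \emph{every} $r$ with $x_r\le q$, that drives the density bound.

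The paper argues directly rather than by contradiction. For each $r\in E:=\{k:x_k\le q\}$ set $p_r:=i(\sigma^r x)\le n-r$, so that $x_{r+p_r}>a_{p_r}$. The sequence $(a_i)$ is chosen so that $a_{p_r}$ exceeds every symbol appearing in any admissible word of length at most $p_rM$ with both endpoints $\le q$; since the word $x_{r+p_r}\dots x_{n+1}$ ends in a symbol $\le q$, it must have length greater than $p_rM$, and in fact no index in $[r+p_r,\,r+p_r+p_rM)$ can lie in $E$. Thus every $r\in E$ is followed (after at most $p_r$ steps) by a run of $p_rM$ indices outside $E$, and a greedy covering of $\{0,\dots,n+1\}$ by the intervals $[r,\,r+p_r(M+1))$ yields $M\cdot\#E\le n+2$. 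The escape condition manufactures, for each visit to $\{1,\dots,q\}$, a proportionally long block of symbols $>q$; an argument that uses the condition at only one point cannot reproduce this.
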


\begin{proof} 
 Let $i \in \N$. Since $\Sigma$ satisfies the $\F-$property,  there are finitely many cylinders of the form $[x_0,\ldots,x_n]$, where $x_0\le q$, $x_n\le q$, and $n\le iM$. Thus, only a finite collection of symbols from the alphabet are used in this collection of cylinders. Denote by $r_i \in \N$ the largest of this collection of symbols. Inductively define  $(a_i)_i  \subset \N$ so that:
\begin{equation*} 
 a_{i+1} >a_i  \quad \text{ and  } \quad a_i > r_i.
\end{equation*}
We now prove that the set  $K=K((a_i)_i)$ is such that $\hat z_n(K)\le \delta_\infty(M,q)(n)$, for every $n\in \N$. Recall that $\hat z_n(K)$ is the minimal number of cylinders of length $(n+2)$ needed to cover  $\widehat{T}_n$. Let $x=(x_0, x_1, \dots) \in \widehat{T}_n$,
\begin{equation*}
E:= \left\{k\in \{0,\ldots,n+1\}:x_k\le a_0 \right\},
\end{equation*}
and $B:=\{0,\ldots,n+1\}\setminus E$. For $k\in E$ we define $p_k:=i(\sigma^k(x))$. We emphasise that since $k\in E$ then $x_k\le a_0$, thus $p_k=i(\sigma^k(x))\ge 1$. Let  $r \in E$ and observe that $x_{p_r+r}=(\sigma^r(x))_{p_r}>a_{p_r}$, where $p_r\le n-r$. 
Because of the choice of $a_{p_r}$, there is no admissible word of length less or equal to $ p_rM$ connecting $x_{p_r+r}$ and a symbol in the set $\{0, 1 \dots, q\}$. Since $x_{n+1}\le q$, this means that we must have $p_r+r+(p_rM)\le n+1$. Moreover, for every $0\le  m< p_rM$
we have that $p_r+r+m\in B$. In other words, the interval $[r,r+p_r+p_rM)$ has at least $p_rM$ elements in $B$, equivalently, at most $p_r$ elements in $E$.  Since this argument holds for every $r\in E$ we conclude that $M\#E\le n+2$ and therefore 
\begin{equation} \label{eq:ele}
  \#E\le \frac{n+2}{M}.
\end{equation}   
From \eqref{eq:ele} it follows that every $x\in \widehat{T}_n$ belongs to a cylinder of the form $[x_0,\ldots,x_{n+1}]$, where $x_0\le q$, $x_{n+1}\le q$ and
\begin{equation*}
 \#\{i\in\{0,1,\ldots,n+1\}: x_i\le q\}\le \frac{n+2}{M}.
 \end{equation*}
This implies that $\hat z_n(K)\le z_n(M,q)$, for every $n\in \N$. Therefore $\hat\delta_\infty(K)\le \delta_\infty(M,q)$. 
\end{proof}
Define $\hat{\delta}_\infty(q):=\inf_{(a_i)_i:a_0=q} \hat\delta_\infty(K((a_i)_i)$.

\begin{corollary}\label{cor:deltaineq} 
For every $q\in \N$ we have  $\hat{\delta}_\infty(q)\le \delta_\infty(q)$.
\end{corollary}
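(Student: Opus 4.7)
The plan is to derive the corollary as a direct consequence of Lemma \ref{lem:Kineq2}, by taking infima over both the sequences $(a_i)_i$ and over $M$. There is very little to do here: Lemma \ref{lem:Kineq2} already gives us, for every pair $(M,q)$, a compact set $K=K((a_i)_i)$ with $a_0=q$ whose associated quantity $\hat\delta_\infty(K)$ is bounded above by $\delta_\infty(M,q)$. So the two parameters controlling $\hat\delta_\infty(q)$ and $\delta_\infty(q)$ are already matched up correctly: $M$ will be eliminated on the right by taking a limit, and $(a_i)_{i\ge 1}$ will be eliminated on the left by taking an infimum.

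More precisely, I would fix $q\in \N$, and for each $M\in\N$ invoke Lemma \ref{lem:Kineq2} to produce a sequence $(a_i^{(M)})_i$ with $a_0^{(M)}=q$ and
\begin{equation*}
\hat\delta_\infty\bigl(K((a_i^{(M)})_i)\bigr)\le \delta_\infty(M,q).
\end{equation*}
By the definition $\hat{\delta}_\infty(q):=\inf_{(a_i)_i:\, a_0=q} \hat\delta_\infty(K((a_i)_i))$, the left-hand side bounds $\hat{\delta}_\infty(q)$ from above, giving $\hat{\delta}_\infty(q)\le \delta_\infty(M,q)$ for every $M\in\N$.

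To finish, I would take the limit $M\to\infty$. Recall that the sequence $M\mapsto \delta_\infty(M,q)$ is non-increasing (this was observed immediately after Definition~\ref{def:ent_inf}, since $z_n(M_2,q)\le z_n(M_1,q)$ for $M_1<M_2$), so
\begin{equation*}
\delta_\infty(q)=\liminf_{M\to\infty}\delta_\infty(M,q)=\inf_{M}\delta_\infty(M,q).
\end{equation*}
Taking infimum over $M$ in the inequality $\hat{\delta}_\infty(q)\le \delta_\infty(M,q)$ then yields $\hat{\delta}_\infty(q)\le \delta_\infty(q)$, as required. There is no real obstacle in this argument; the whole content sits in Lemma \ref{lem:Kineq2}, which implicitly uses the $\F$-property to ensure finiteness of the relevant sets of cylinders when choosing the sequence $(a_i)_i$.
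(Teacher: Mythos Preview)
Your proof is correct and follows essentially the same approach as the paper, which simply says to ``combine Lemma \ref{lem:Kineq} with Lemma \ref{lem:Kineq2}.'' In fact your write-up is slightly more streamlined: you only invoke Lemma \ref{lem:Kineq2} and the definition of $\hat\delta_\infty(q)$ as an infimum, whereas the paper also cites Lemma \ref{lem:Kineq} (the monotonicity of $\hat\delta_\infty(K)$ in $K$), which is not strictly needed for the inequality as you have argued it.
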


\begin{proof} Combine Lemma \ref{lem:Kineq} with Lemma \ref{lem:Kineq2}.
\end{proof}

We now  prove Theorem \ref{thm:main}.  

\begin{proof}[Proof of  Theorem \ref{thm:main}] 
Let $(a_i)_i$ be a sequence of non-negative integers and $K:=K((a_i)_i)$ the corresponding compact set. We assume $K$ large enough so that 
there exists a periodic measure  $\mu_p$  with $\mu_p(K)>0$.  We will prove that 
\begin{align}\label{11} 
\limsup_{n\to \infty} h_{\mu_n}(\sigma)\le |\mu|h_{\mu/|\mu|}(\sigma)+(1-\mu(K))\hat\delta_\infty(K).
\end{align}

Let  $\mu'_{n}:=(1-\frac{1}{n})\mu_n+\frac{1}{n}\mu_p$. Observe that for every $n\in\N$ we have $\mu_n'(K)>0$. It follows from Proposition \ref{dense} that there exists an ergodic measure $\nu_n$ arbitrarily close in the weak$^*$ topology to $\mu_{n}'$  such that $h_{\nu_n}(\sigma)>h_{\mu'_n}(\sigma)-\frac{1}{n}$.  In particular, we can assume that $\nu_n(K(n,n))>0$ and that $(\nu_n)_n$ converges on cylinders to $\mu$.

Let  $k , N \in \N$. If $n>\max\{k,N\}$ then $K(n,n)\subset K(k,N)$, therefore $\nu_n(K(k,N))>0$.  It now follows from \eqref{forA} that
\begin{equation*}  
\limsup_{n\to\infty} h_{\nu_n}(\sigma)\leq  |\mu| h_{\frac{\mu}{|\mu|}}(\sigma)+(\hat\delta_\infty(K)+\e)(1-\mu(G_{k,N})) +\frac{1}{k}(\hat\delta_\infty(K)+\e).
\end{equation*}
Letting $k$ tend to infinity and $\e$ to zero we obtain 
\begin{equation*}
\limsup_{n\to\infty} h_{\nu_n}(\sigma)\leq  |\mu| h_{\frac{\mu}{|\mu|}}(\sigma)+(1-\mu(K))\hat\delta_\infty(K).
\end{equation*}
Since  $h_{\nu_n}(\sigma)>h_{\mu'_n}(\sigma)-\frac{1}{n}=(1-\frac{1}{n})h_{\mu_n}-\frac{1}{n}$, then
\begin{equation*}
 \limsup_{n\to \infty} h_{\mu_n}(\sigma)\le \limsup_{n\to \infty} h_{\nu_n}(\sigma),
 \end{equation*}
from which \eqref{11} follows. 

The argument above also holds for every set $K'=K((b_i)_i)$, where $a_0=b_0$ and $a_i\le b_i$. Observe that  $\sup_{(b_i)_i:b_0=a_0} \mu(K((b_i)_i))=\mu(K_{a_0})$. Thus, it is a consequence of  Corollary \ref{cor:deltaineq} that
\begin{align*} \limsup_{n\to \infty} h_{\mu_n}(\sigma)&\le |\mu|h_{\mu/|\mu|}(\sigma)+(1-\mu(K_{a_0}))\hat{\delta}_\infty(a_0)\\
& \le |\mu|h_{\mu/|\mu|}(\sigma)+(1-\mu(K_{a_0}))\delta_\infty(a_0).
\end{align*}
Letting $a_0$ tend to infinity concludes the proof of Theorem \ref{thm:main}.
\end{proof}

\section{Variational principle for the entropies at infinity}\label{entinf1}

 In this section we prove  Theorem \ref{thm:vpinf}. That is, we prove a variational principle at infinity: the measure theoretic entropy at infinity coincides with its topological counterpart.  
 
For each pair $(i,j)\in\N^2$  choose a non-empty cylinder $w(i,j)$ of length  $\ell(i,j)+1$ such that
\begin{equation*}
w(i,j):=[i, \dots, j]= [(w(i,j)_0, \dots , w(i,j)_{\ell(i,j)}].
\end{equation*}
Let $\phi: \Sigma \to \R$ be a potential and define
\begin{equation*}
Z_n(\phi, a,b):=\sum_{x:\sigma^{n+\ell(b,a)}(x)=x}\exp \left(S_{n+\ell(b,a)}\phi(x) \right)1_{[a]\cap \sigma^{-n}w(b,a)}(x).
\end{equation*}
In the following lemma we show that the Gurevich pressure can be computed by means of the partition function $Z_n(\phi, a,b)$; this will be used in Lemma \ref{lem:testineq}. 

\begin{lemma}\label{lem:equivgur} Let $(\Sigma,\sigma)$ be a transitive CMS and  $\phi: \Sigma \to \R$ a bounded potential with summable variations. Then for every pair $(a,b) \in \N^2$ we have that
 \begin{equation*}
P_{G}(\phi)=\limsup_{n\to\infty}\frac{1}{n}\log Z_n(\phi,a,b).
\end{equation*}
\end{lemma}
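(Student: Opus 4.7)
The plan is to prove the two inequalities
\begin{equation*}
\limsup_{n\to\infty} \tfrac{1}{n} \log Z_n(\phi,a,b) \le P_G(\phi) \quad \text{and} \quad \limsup_{n\to\infty} \tfrac{1}{n} \log Z_n(\phi,a,b) \ge P_G(\phi)
\end{equation*}
separately. Throughout, write $\ell=\ell(b,a)$ for brevity and $w=w(b,a)$ with $w_0=b$, $w_\ell=a$.

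For the upper bound, I observe that every periodic point $x$ counted in $Z_n(\phi,a,b)$ has period $n+\ell$ and lies in $[a]$, so it is also counted in $Z_{n+\ell}(\phi,a)$ with the same weight $e^{S_{n+\ell}\phi(x)}$. Hence $Z_n(\phi,a,b)\le Z_{n+\ell}(\phi,a)$. Setting $m=n+\ell$ and using that $n/m\to 1$, the bound follows at once from the definition $P_G(\phi)=\limsup_m\tfrac{1}{m}\log Z_m(\phi,a)$.

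For the lower bound, I use transitivity to fix an admissible word $u_0u_1\ldots u_{k_1}$ with $u_0=a$ and $u_{k_1}=b$, for some $k_1\in\N$. To each periodic point $x$ of period $n$ with $x\in[b]$ I associate the periodic point $z=z(x)$ of period $N:=n+k_1+\ell$ whose generating word is
\begin{equation*}
u_0\,u_1\ldots u_{k_1-1}\;x_0\,x_1\ldots x_{n-1}\;w_1\,w_2\ldots w_{\ell-1},
\end{equation*}
where $u_{k_1}$ is identified with $x_0=b$, the symbol at position $k_1+n$ is $b=w_0$ (using $x_n=b$ by periodicity of $x$), and the cycle closes thanks to $w_\ell=a=u_0$. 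Admissibility at the three splice points follows from (i) the admissibility of $u$, (ii) the periodicity relation $x_{n-1}\to x_n=b$, and (iii) the admissibility of $w$ together with $w_\ell=a=u_0$. By construction $z(x)\in[a]\cap\sigma^{-(n+k_1)}w$, so it contributes to $Z_{n+k_1}(\phi,a,b)$, and the assignment $x\mapsto z(x)$ is injective because $x_0,\ldots,x_{n-1}$ can be recovered from $z(x)$ at positions $k_1,\ldots,k_1+n-1$.

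The main estimate is the comparison of $S_N\phi(z(x))$ with $S_n\phi(x)$. I split $S_N\phi(z(x))$ into the contributions at positions outside $[k_1,k_1+n)$, of which there are $k_1+\ell$ and each of absolute value at most $\|\phi\|_0$, and the contributions at positions $k_1+j$ with $0\le j<n$. For the latter, a direct check shows that $\sigma^{k_1+j}z(x)$ and $\sigma^jx$ agree on their first $n-j+1$ coordinates (they only start to differ when $z(x)$ begins to follow $w$, while $x$ loops by periodicity), so
\begin{equation*}
\left|\phi(\sigma^{k_1+j}z(x))-\phi(\sigma^jx)\right|\le \operatorname{var}_{n-j+1}(\phi).
\end{equation*}
Summable variations then gives $\sum_{j=0}^{n-1}\operatorname{var}_{n-j+1}(\phi)\le \sum_{m\ge 2}\operatorname{var}_m(\phi)=:V<\infty$, yielding
\begin{equation*}
S_N\phi(z(x))\ge S_n\phi(x)-V-(k_1+\ell)\|\phi\|_0.
\end{equation*}
Summing over all $n$-periodic points $x$ in $[b]$ and using injectivity yields $Z_{n+k_1}(\phi,a,b)\ge C_0\, Z_n(\phi,b)$ for a constant $C_0>0$ independent of $n$. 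Since $P_G(\phi)=\limsup_n\tfrac{1}{n}\log Z_n(\phi,b)$ by the base-point independence of the Gurevich pressure, setting $m=n+k_1$ and passing to $\limsup$ gives the desired $\limsup_m\tfrac{1}{m}\log Z_m(\phi,a,b)\ge P_G(\phi)$.

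The main obstacle is the Birkhoff-sum bookkeeping: pinning down exactly how many coordinates $\sigma^{k_1+j}z(x)$ and $\sigma^jx$ share, and deploying both hypotheses on $\phi$, namely boundedness to control the $k_1+\ell$ boundary terms and summable variations to bound $\sum_j\operatorname{var}_{n-j+1}(\phi)$, in order to obtain an error that is uniform in $n$.
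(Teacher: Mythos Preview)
Your proof is correct and follows essentially the same approach as the paper: the upper bound is identical, and for the lower bound both arguments splice together a connecting word from $a$ to $b$, a periodic orbit based at $b$, and the word $w(b,a)$, controlling the Birkhoff-sum discrepancy via boundedness of $\phi$ on the $k_1+\ell$ boundary positions and summable variations on the middle block. The only cosmetic differences are that the paper runs the bijection in the reverse direction (extracting the $[b]$-periodic point from a point in $w(a,b)\cap\sigma^{-n}w(b,a)$ rather than building it up) and that your displayed generating word is missing the symbol $w_0=b$ at position $k_1+n$, which your surrounding prose nonetheless handles correctly.
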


\begin{proof} Let $C=\|\phi\|_0$ and $D=\sum_{k=2}^\infty \text{var}_k(\phi)$. It follows from the definition of $Z_n(\phi,a,b)$ that 
\begin{equation*}
Z_{n+\ell(b,a)}(\phi,a)=\sum_{x:\sigma^{n+\ell(b,a)}(x)=x}\exp(S_{n+\ell(b,a)}\phi(x))1_{[a]}(x)  \ge Z_n(\phi,a,b).
\end{equation*}
In particular we obtain that 
 \begin{equation*}
 P_{G}(\phi)=\limsup_{n\to\infty}\frac{1}{n}\log Z_n(\phi,a)\ge \limsup_{n\to\infty}\frac{1}{n}\log Z_n(\phi,a,b).
 \end{equation*}
Let  $\mathbb{P}_n:=w(a,b)\cap \sigma^{-n}w(b,a)$. Note that 
\begin{align*} 
Z_n(\phi,a,b)&\ge \sum_{x:\sigma^{n+\ell(b,a)}(x)=x}\exp \left(S_{n+\ell(b,a)}\phi(x) \right)1_{P_n}(x)\\
& \ge e^{-(\ell(a,b)+\ell(b,a))C}   \sum_{x:\sigma^{n+\ell(b,a)}(x)=x} \exp \left(S_{n -\ell(b,a)}\phi(\sigma^{\ell(a,b)}x) \right)1_{{P}_n}(x).\\
\end{align*}
Observe that if $x=(x_0, x_1, \dots) \in \mathbb{P}_n$, then $x_{\ell(b,a)}=x_{n}=b$. Define the periodic point $y(x):=\overline{x_{\ell(b,a)}\ldots x_{n-1}}$. The function $y$ establishes a one-to-one correspondence between points in $x\in \mathbb{P}_n$ such that $\sigma^{n+\ell(b,a)}(x)=x$, and periodic points  of length $n-\ell(b,a)$ in $[b]$. Moreover, note that if $x\in\mathbb{P}_n$, then 
\begin{equation*}
\left |S_{n-\ell(b,a)} \left(\phi(\sigma^{\ell(a,b)}x) \right)-S_{n-\ell(b,a)} \left(\phi(y(x)) \right) \right|\le D.
\end{equation*}
We conclude that 
\begin{eqnarray*} 
\sum_{x:\sigma^{n+\ell(b,a)}(x)=x} \exp \left(S_{n -\ell(b,a)}\phi(\sigma^{\ell(a,b)}x) \right)1_{\mathbb{P}_n}(x) \ge &\\
e^{-D} \sum_{x:\sigma^{n-\ell(b,a)}(y)=y} \exp \left(S_{n -\ell(b,a)}\phi(y) \right)1_{[b]}(x).
\end{eqnarray*}
That is $Z_n(\phi,a,b)\ge e^{-(\ell(a,b)+\ell(b,a))C-D}Z_{n-\ell(b,a)}(\phi,b)$ and therefore 
\begin{equation*}
\limsup_{n\to\infty}\frac{1}{n}\log Z_n(\phi,a,b)\ge P_{G}(\phi).
\end{equation*}
\end{proof}

\begin{remark} Note that in Lemma \ref{lem:equivgur} the assumption $\|\phi\|_0<\infty$ is too strong for what is required: it suffices  to  assume that 
for every $n\in \N$ we have $\sup_{x\in [n]}|\phi(x)|<\infty$.
\end{remark}

We say that a point $x\in\Sigma$ belongs to the set  $Per(q,M,n)$ if the following properties hold:
\begin{enumerate}
\item $\sigma^n(x)=x$.
\item If $x\in [x_0,\ldots,x_{n-1}]$, then $x_0\le q$, and $\#\{k\in\{0,\ldots,n-1\}:x_k\le q\}\le \frac{n}{M}$.
\end{enumerate}

The following lemma is important in our proof of Theorem \ref{thm:vpinf} as it will allow us to find a sequence of invariant probability measures which converges to the zero measure and entropies approach the topological entropy at infinity.

\begin{lemma}\label{lem:testineq} Let $\phi: \Sigma \to \R$ be a bounded  potential of summable variations such that 
\begin{equation*}
\lim_{n\to\infty}\sup_{x\in [n]} |\phi(x)|=0.  
\end{equation*}
Then $P_G(\phi)\ge \delta_\infty$.
\end{lemma}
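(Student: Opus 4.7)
The plan is to express $P_G(\phi)$ via $Z_{n+1}(\phi,a,b)$ using Lemma \ref{lem:equivgur}, and then to bound $Z_{n+1}(\phi,a,b)$ from below in terms of $z_n(M,q,a,b)$ by exploiting the decay of $\phi$ at infinity. Fix symbols $a,b$ with $a,b\le q$. To each admissible cylinder $[x_0,\dots,x_{n+1}]$ with $x_0=a$ and $x_{n+1}=b$ of the type counted by $z_n(M,q,a,b)$ I would assign the periodic point whose repeating block of length $n+1+\ell(b,a)$ is
\begin{equation*}
x_0\, x_1\cdots x_{n+1}\, y_1 \cdots y_{\ell(b,a)-1},
\end{equation*}
where $w(b,a)=(y_0,\dots,y_{\ell(b,a)})$. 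This block is admissible by the construction of $w(b,a)$, the resulting periodic point lies in $[a]\cap\sigma^{-(n+1)}w(b,a)$, and distinct cylinders produce distinct periodic points; so this gives an injection of the cylinders counted by $z_n(M,q,a,b)$ into the periodic orbits appearing in $Z_{n+1}(\phi,a,b)$.

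The next step is to bound the Birkhoff sum $S_{n+1+\ell(b,a)}\phi(x)$ along each such periodic orbit from below. Set $C:=\|\phi\|_0$ and $\eps(q):=\sup_{m>q}\sup_{y\in[m]}|\phi(y)|$, so that $\eps(q)\to 0$ as $q\to\infty$ by the vanishing-at-infinity hypothesis. I would split the $n+1+\ell(b,a)$ summands into three groups: indices $k\in\{0,\dots,n+1\}$ with $x_k\le q$ (of which there are at most $(n+2)/M$ by the constraint defining $z_n(M,q,a,b)$, each contributing at least $-C$); indices $k\in\{0,\dots,n+1\}$ with $x_k>q$ (each contributing at least $-\eps(q)$); and the $\ell(b,a)-1$ indices in the gluing block $y_1\cdots y_{\ell(b,a)-1}$ (each at least $-C$). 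Adding these contributions gives
\begin{equation*}
Z_{n+1}(\phi,a,b)\ge z_n(M,q,a,b)\exp\bigl(-(n+2)C/M-(n+2)\eps(q)-(\ell(b,a)-1)C\bigr).
\end{equation*}

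Taking $\tfrac{1}{n}\log$ of both sides, letting $n\to\infty$ with $M,q,a,b$ fixed, and applying Lemma \ref{lem:equivgur} together with identity \eqref{eq:deltainfagain}, I would conclude $P_G(\phi)\ge\delta_\infty(M,q)-C/M-\eps(q)$ for every $M$ and every $q\ge\max(a,b)$. Sending $M\to\infty$ (which turns $\delta_\infty(M,q)$ into $\delta_\infty(q)$ via the defining $\liminf$) and then $q\to\infty$ (using $\eps(q)\to 0$ and $\delta_\infty=\liminf_q\delta_\infty(q)$) yields the desired $P_G(\phi)\ge\delta_\infty$. The delicate point is the Birkhoff-sum bookkeeping in the second step: the constraint $\#\{k:x_k\le q\}\le(n+2)/M$ is precisely what keeps the expensive $-C$ contributions sub-linear in $n$, so that enlarging $M$ absorbs the $-C/M$ error, after which the remaining error $\eps(q)$ vanishes by the decay assumption on $\phi$.
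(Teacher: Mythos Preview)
Your proof is correct and follows essentially the same approach as the paper: inject the cylinders counted by $z_n$ into the periodic orbits appearing in $Z_n(\phi,a,b)$ via concatenation with $w(b,a)$, bound the Birkhoff sum using the decay of $\phi$ together with the $\le (n+2)/M$ constraint, and then apply Lemma~\ref{lem:equivgur} before sending $M,q\to\infty$. The only cosmetic difference is that the paper sums over all pairs $(a,b)\in\{1,\dots,N_0\}^2$ to recover $z_{n-1}(2M,N_0)$ (which forces the extra factor of $2$ in $M$ to absorb the gluing block into the index count), whereas you keep $a,b$ fixed and invoke identity~\eqref{eq:deltainfagain} directly, absorbing the gluing block into the Birkhoff-sum bound instead; your route is slightly more streamlined.
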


\begin{proof}  
For every $\e>0$ there exists $N_0=N_0(\e) \in \N$ such that $\sup_{x \in [n]} |\phi(x)| \le \e$, for every $n\ge N_0$.  By Lemma \ref{lem:equivgur}, for sufficiently large values of $n \in \N$,  since $Z_n(a,n)(\phi) \leq \exp  (n P_G(\phi) + \e)$ there exists $N'=N'(N_0) \in \N$ such that
\begin{equation*}
N' \exp  \left(n P_G(\phi) + \e \right)  \geq \sum_{(a,b)\in \{1,\ldots,N_0\}^2} Z_n(\phi,a,b).
\end{equation*}
That is, 
\begin{equation*}
P_G(\phi)\ge \limsup_{n\to\infty} \frac{1}{n}\log\sum_{(a,b)\in \{1,\ldots,N_0\}^2} Z_n(\phi,a,b).
\end{equation*}
Define 
\begin{equation*}
\mathbb{T}_n(a,b):=\sum_{x\in Per(N_0,M,n+\ell(b,a))} \exp \left(S_{n+\ell(b,a)}\phi(x) \right)1_{[a]\cap \sigma^{-n}w(b,a)}(x),
\end{equation*}
and observe that $Z_n(\phi,a,b)\ge \mathbb{T}_n(a,b).$ Recall that $x\in Per(N_0,M,n+\ell(b,a))$ implies that 
\begin{equation*}
\# \left\{k\in\{0,\ldots,n+\ell(b,a)-1\}:x_k\le N_0 \right\} \le \frac{n+\ell(b,a)}{M}.
\end{equation*}
It follows from the choice of $N_0$ that 
\begin{equation*}
S_{n+\ell(b,a)}\phi(x)\ge  -(n+\ell(b,a))\e-\frac{\|\phi\|_0}{M}(n+\ell(b,a)).
\end{equation*}
In particular 
\begin{equation*}
\mathbb{T}_n(a,b)\ge \#  \left\{ Per(N_0,M,n+\ell(b,a))\cap [a]\cap\sigma^{-n}w(b,a) \right\} e^{-(n+\ell(b,a))(\e+\frac{\|\phi\|_0}{M})}.
\end{equation*}
Denote by $\mathcal{W}_n(a,b,N_0,M)$ the collection of cylinders of the form  $[x_0,\ldots,x_{n}]$, where $x_0=a$, $x_n=b$, and $\#\{k\in\{0,\ldots,n\}:x_k\le N_0\}\le \frac{n+1}{2M}$. 
In order to estimate the number of these using periodic points, to each cylinder $[x_0,\ldots,x_n]\in \mathcal{W}_n(a,b,N_0,M)$ we associate the cylinder
\begin{equation*}
D=[y_0,\ldots,y_{n+\ell(b,a)}]=[x_0,\ldots,x_n,(w(b,a))_1,\ldots,w(b,a)_{\ell(b,a)}].
\end{equation*}
 Observe that $y_0=a$, $y_{n+\ell(b,a)}=a$, and $$\#\{k\in \{0,\ldots,n+\ell(b,a)\}:y_k\le N_0\}\le \frac{n+1}{2M}+\ell(b,a).$$  For $n \in \N$ sufficiently large we can assume that $\frac{n+1}{2M}+\ell(b,a)\le \frac{n+\ell(b,a)}{M}$. In particular the periodic point associated to $D$ belongs to $Per(N_0,M,n+\ell(b,a))\cap[a]\cap\sigma^{-n}w(b,a)$. We conclude that
 \begin{equation*}
 \#\mathcal{W}_n(a,b,N_0,M)\le \#\left\{Per(N_0,M,n+\ell(b,a))\cap[a]\cap\sigma^{-n}w(b,a)\right\}.
\end{equation*} 
Observe that $\sum_{(a,b)\in \{1,\ldots,N_0\}^2} \#\mathcal{W}_n(a,b,N_0,M)=z_{n-1}(2M,N_0)$, which implies 
\begin{equation*}
z_{n-1}(2M,N_0)\le \sum_{(a,b)\in \{1,\ldots,N_0\}^2}  \#\left\{Per(N_0,M,n+\ell(b,a))\cap[a]\cap\sigma^{-n}w(b,a) \right\}.
\end{equation*}
Hence, writing $\ell_{N_0} := \max_{(a,b)\in \{1,\ldots,N_0\}^2}\ell(b,a))$,  we obtain that 
\begin{equation*}
\sum_{(a,b)\in \{1,\ldots,N_0\}^2}Z_n(\phi,a,b)\ge \sum_{(a,b)\in \{1,\ldots,N_0\}^2}\mathbb{T}_n(a,b)\ge z_{n-1}(2M,N_0) e^{-(n+\ell_{N_0})(\e+\frac{\|\phi\|_0}{M})},
\end{equation*}
and therefore 
\begin{equation*}
P_G(\phi)\ge \limsup_{n\to\infty}\frac{1}{n}\log \sum_{(a,b)\in \{1,\ldots,N_0\}^2}Z_n(\phi,a,b)\ge \delta_\infty(2M,N_0)-\e-\frac{\|\phi\|_0}{M}.
\end{equation*}
Letting $M\to\infty$ we obtain that $P(\phi)\ge \delta_\infty(N_0)-\e$. Choosing $N_0$ sufficiently large we can make $\e$ arbitrarily small, to conclude that $P_G(\phi)\ge \delta_\infty$. 
\end{proof}

Recall that the measure theoretic entropy at infinity of a transitive CMS of finite entropy $(\Sigma,\sigma)$ is defined by
\begin{equation*} 
h_\infty :=\sup_{(\mu_n)_n\to 0}\limsup_{n\to\infty}h_{\mu_n}(\sigma),
\end{equation*}
where the supremum is taken over all sequences of invariant probability measures converging on cylinders to the zero measure.  An immediate consequence of Theorem \ref{thm:main} is the following upper bound for the measure theoretic entropy at infinity of $(\Sigma,\sigma)$: 
\begin{align}  \label{eq:ineinf}
h_\infty  \le \delta_\infty 
 \end{align}
We will now prove that in fact equality holds. This is equivalent to the sharpness of the inequality in Theorem \ref{thm:main}.

\begin{proof}[Proof of Theorem~\ref{thm:vpinf}]
 As observed in \eqref{eq:ineinf}, it suffices to prove the inequality $\delta_\infty\le h_\infty$. Let $\phi: \Sigma \to \R$ be a bounded, strictly negative locally constant potential depending only on the first coordinate such that
\begin{equation*}
\lim_{n\to\infty}\sup_{x\in [n]} |\phi(x)|=0.  
\end{equation*}
By Lemma \ref{lem:testineq}, for every $t \in \R$ we have $P(t\phi)\ge \delta_\infty$. Now consider a sequence of measures $(\mu_n)_{n }$ such that 
\begin{equation*}
h_{\mu_n}(\sigma)+n\int \phi ~d\mu_n>P(n\phi)-\frac{1}{n}.
\end{equation*}
The existence of such a sequence of invariant probability measures is guaranteed by the variational principle. Then 
\begin{equation*}
h_{\mu_n}(\sigma)+n\int \phi~d\mu_n>\delta_\infty-\frac{1}{n}.
\end{equation*}
Since the potential $\phi$ is strictly negativity and bounded we conclude that the sequence  $(\mu_n)_{n}$ converges on cylinders to the zero measure. Since $h_{\mu_n}(\sigma)\ge \delta_\infty-\frac{1}{n}$, 
\begin{equation*}
\limsup_{n\to \infty} h_{\mu_n}(\sigma)\ge \delta_\infty.
\end{equation*}
In particular, $\delta_\infty\le h_\infty$. 
\end{proof}

\section{Applications} \label{sec:app}
In this section we discuss  several consequences of Theorem \ref{thm:main}. Among the consequences we obtain the upper semi-continuity of the entropy map, the entropy density of the space of ergodic measures, the stability of the measure of maximal entropy in the SPR case, existence of equilibrium states for potentials in $C_0(\Sigma)$, a relationship between the entropy at infinity and the dimension of the set of recurrent points that escape on average and a bound on the amount of mass that can escape for measures with large entropy.

\subsection{Upper semi-continuity of the entropy map}  \label{sec:usc}
Starting in the early 1970s with the work of Bowen \cite{bo1} many results describing the continuity properties of the entropy map have been obtained. More precisely, given a dynamical system $T:X \to X$, the map   $\mu \mapsto h_{\mu}(T)$ defined on the space $ \M(X,T)$  endowed with the weak$^*$ topology is called \emph{entropy map}.  In general it is not continuous \cite[p.184]{wa}. However, it was soon realised that 
that if $X$ is compact and $T$ expansive then the entropy map is upper-semi continuous \cite[Theorem 8.2]{wa}. This result has been extended to a wide range of dynamical systems exhibiting weak forms of expansion or hyperbolicity, but always assuming the compactness of $X$. Indeed, there exist examples of expansive maps $T$ defined on non-compact spaces for which the entropy map is not upper semi-continuous. We discuss some of them in this section (see Remark \ref{rem:nousc}). We recently proved in \cite[Corollary 1.2]{itv} that if $(\Sigma, \sigma)$ is a finite entropy transitive CMS then the entropy map is upper semi-continuous when restricted to ergodic measures. The method of proof used in  \cite{itv} does not seem to generalise to handle the non-ergodic case. However, the general case can be obtained directly as a corollary of  Theorem \ref{thm:main}.

\begin{theorem} \label{semicont} Let $(\Sigma,\sigma)$ be a transitive CMS of finite topological entropy and  $(\mu_n)_{n}$ a sequence of $\sigma$-invariant probability measures converging weak$^*$ to $\mu$. Then 
\begin{equation*}
\limsup_{n\to \infty} h_{\mu_n}(\sigma)\le h_\mu(\sigma).
\end{equation*}
That is,  the entropy map is upper semicontinuous.
\end{theorem}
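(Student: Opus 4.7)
The plan is to derive Theorem \ref{semicont} as an essentially immediate corollary of Theorem \ref{thm:main}, with the only real work being to verify that the hypotheses of the latter are met and that the loss term $(1-|\mu|)\delta_\infty$ vanishes.

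First I would observe that if $(\mu_n)_n$ converges weak$^*$ to a probability measure $\mu$, then $\mu$ is indeed a probability measure (this is automatic from weak$^*$ convergence since the constant function $1$ lies in $C_b(\Sigma)$), so $|\mu|=1$. Next, I would invoke Lemma \ref{equivtop}: weak$^*$ convergence of invariant probability measures to an invariant probability measure is equivalent to convergence on cylinders. Thus $(\mu_n)_n$ converges on cylinders to $\mu$, and the hypotheses of Theorem \ref{thm:main} are satisfied.

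Applying Theorem \ref{thm:main} then gives
\begin{equation*}
\limsup_{n\to\infty} h_{\mu_n}(\sigma) \le |\mu|\, h_{\mu/|\mu|}(\sigma) + (1-|\mu|)\delta_\infty = h_\mu(\sigma),
\end{equation*}
where the second equality uses $|\mu|=1$ (so the escape term drops out and the normalisation $\mu/|\mu|$ equals $\mu$). This is precisely the upper semi-continuity statement.

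There is no real obstacle: the substance has already been done in Theorem \ref{thm:main}. The only thing to be mindful of is to state clearly why no mass is lost in the weak$^*$ limit, since it is exactly the absence of escape of mass that forces the $\delta_\infty$ contribution to disappear; this highlights, conceptually, that upper semi-continuity of the entropy map for CMS is an equivalent reformulation of the no-escape-of-mass case of the main inequality.
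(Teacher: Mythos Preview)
Your proposal is correct and matches the paper's own proof essentially verbatim: the paper simply states that the result follows immediately from Theorem \ref{thm:main}, the fact that $|\mu|=1$, and Lemma \ref{restriction} (which is the same as the Lemma \ref{equivtop} you cite).
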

The proof follows immediately from Theorem \ref{thm:main}, the fact that $|\mu|=1$ and Lemma \ref{restriction}.

\begin{remark}\label{rem:nousc}
We now describe the situation in the infinite entropy case.
\begin{enumerate}
\item[(a)] 
Without the finite entropy assumption, Theorem~\ref{semicont} is false, as we demonstrate here. If $(\Sigma , \sigma)$ is a a topologically transitive infinite entropy CMS then there exists a sequence $(\nu_n)_n$ and $\mu$ in  $\M(\Sigma, \sigma)$ such that $h_{\mu}(\sigma) < \infty$, and $\lim_{n \to \infty} h_{\nu_n}(\sigma)= \infty$. Let $(\mu_n)_n$ be the sequence of invariant probability measures defined by
\begin{equation*}
 \mu_n:= \left(1-\frac{1}{\sqrt{h_{\nu_n}(\sigma)}} \right)\mu+\frac{1}{\sqrt{h_{\nu_n}(\sigma)}}       \nu_n.
 \end{equation*}
Notice $\mu_n$ is well defined for large enough $n$. Then $(\mu_n)_n$ converges weak$^*$ to $\mu$ and
\begin{equation*}
h_{\mu}(\sigma)<\lim_{n \to \infty} h_{\mu_n}(\sigma)= \infty.
\end{equation*}
Therefore, the entropy map is not upper semi-continuous at any finite entropy measure.

\item[(b)]  Examples of sequences of ergodic measures with finite entropy uniformly bounded above  converging weak$^*$ to an ergodic measure (with finite entropy) in the full-shift on a countable alphabet, for which the entropy map is not upper-semi continuous can be found in \cite[p.774]{jmu} and \cite[Remark 3.11]{itv}.

\item[(c)] The entropy map is trivially upper semi-continuous at any measure of infinite entropy.

\end{enumerate}
\end{remark}

We conclude this subsection with a  consequence of Theorem \ref{thm:main} and Remark \ref{rem:nousc}. 

\begin{proposition}\label{prop:iff} Let $(\Sigma,\sigma)$ be a transitive CMS. Then $h_{top}(\sigma)$ is finite if and only if $\delta_\infty$ is finite. 
\end{proposition}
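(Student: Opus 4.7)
For the $(\Rightarrow)$ direction I would verify the direct bound $\delta_\infty\le h_{top}(\sigma)$ from the definitions. Each cylinder counted by $z_n(M,q)$ is an admissible word of length $n+2$ whose endpoints lie in the finite set $\{1,\ldots,q\}$, so $z_n(M,q)$ is dominated by a constant times the count of admissible words of length $n+2$ between the finitely many pairs of symbols in $\{1,\ldots,q\}^2$. By Lemma~\ref{lem:equivgur} together with the transitivity of $(\Sigma,\sigma)$, this count grows at exponential rate at most $h_{top}(\sigma)$, so $\delta_\infty(M,q)\le h_{top}(\sigma)$ uniformly in $M$ and $q$. Taking the iterated infima yields $\delta_\infty\le h_{top}(\sigma)$, so $h_{top}(\sigma)<\infty$ forces $\delta_\infty<\infty$.

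For the converse I would argue by contrapositive. Assume $h_{top}(\sigma)=\infty$ and aim to show $\delta_\infty=\infty$. By Remark~\ref{rem:nousc}(a) there is a sequence $(\nu_n)_n$ of ergodic invariant probability measures with $h_{\nu_n}(\sigma)\to\infty$. Using Proposition~\ref{dense} to arrange the approximation well, Theorem~\ref{compact} for compactness of $\M_{\le 1}(\Sigma,\sigma)$ in the cylinder topology (via the $\F$-property, whose failure has to be dealt with separately by exhibiting explicit loops), and a diagonal argument, I would pass to a subsequence converging on cylinders to some $\mu\in\M_{\le 1}(\Sigma,\sigma)$ chosen so that $h_{\mu/|\mu|}(\sigma)<\infty$ when $|\mu|>0$. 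The inequality from Theorem~\ref{thm:main} then gives
\[\limsup_n h_{\nu_n}(\sigma)\le |\mu|\,h_{\mu/|\mu|}(\sigma)+(1-|\mu|)\delta_\infty.\]
If $\delta_\infty$ were finite the right-hand side would be finite, contradicting $h_{\nu_n}(\sigma)\to\infty$. Hence $\delta_\infty=\infty$.

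The main obstacle is an apparent circularity: Theorem~\ref{thm:main} is stated precisely under the hypothesis $h_{top}(\sigma)<\infty$ that I am trying to establish. I would resolve this by tracing the proof of Theorem~\ref{thm:main} through Proposition~\ref{prop:ineq}, observing that the only appeal to Katok's entropy formula inside that proof is to the upper-bound direction (Lemma~\ref{lem:katokine}); that inequality is proved for any CMS and does not require finite topological entropy. Consequently the bound of Theorem~\ref{thm:main} used in the preceding paragraph remains valid in the infinite-entropy setting, so the converse goes through without circular dependence.
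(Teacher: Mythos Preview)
Your overall strategy matches the paper's: the forward direction is the trivial bound $\delta_\infty\le h_{top}(\sigma)$, and the converse is obtained by checking that the machinery behind Theorem~\ref{thm:main} actually only needs the $\F$-property together with $\delta_\infty<\infty$, and then contradicting Remark~\ref{rem:nousc}. However, there are two concrete gaps in your execution.

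First, your audit of where finite entropy enters the proof of Theorem~\ref{thm:main} is incomplete. You correctly note that Proposition~\ref{prop:ineq} invokes Katok's formula only through Lemma~\ref{lem:katokine}, which holds for any CMS. But finite entropy (really, the $\F$-property) is also used elsewhere: Claim~\ref{claim:preatom} inside Proposition~\ref{prop:atom} relies on there being only finitely many admissible words of a given length between two fixed symbols, and Lemma~\ref{lem:Kineq2} explicitly assumes the $\F$-property. The paper's proof singles these out as the two places where the $\F$-property is crucially used; your check of Katok's formula alone does not justify that ``the bound of Theorem~\ref{thm:main} remains valid in the infinite-entropy setting.''

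Second, and more seriously, the step ``pass to a subsequence converging on cylinders to some $\mu$ \emph{chosen so that $h_{\mu/|\mu|}(\sigma)<\infty$}'' is not justified. If a subsequential limit $\mu$ happens to satisfy $|\mu|>0$ and $h_{\mu/|\mu|}(\sigma)=\infty$, then Theorem~\ref{thm:main} yields only $\infty\le\infty$, and no contradiction follows; neither Proposition~\ref{dense} nor a diagonal argument lets you force the limit to have finite entropy. The paper avoids this entirely: Remark~\ref{rem:nousc}(a) does not merely produce measures with $h_{\nu_n}\to\infty$, it constructs the convex combinations $\mu_n=(1-h_{\nu_n}^{-1/2})\mu+h_{\nu_n}^{-1/2}\nu_n$ which converge weak$^*$ to a \emph{fixed} finite-entropy probability measure $\mu$. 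Then $|\mu|=1$, $h_\mu(\sigma)<\infty$, and Theorem~\ref{thm:main} gives the immediate contradiction $\infty\le h_\mu(\sigma)$. No compactness of $\M_{\le1}(\Sigma,\sigma)$ is needed.
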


\begin{proof} We only need to prove that if $\delta_\infty$ is finite, then $h_{top}(\sigma)$ is finite; the other direction follows directly from the inequality $\delta_\infty\le h_{top}(\sigma)$. 

First assume that $(\Sigma,\sigma)$ does not satisfy the $\F-$property.  It follows directly from the definition of $\delta_\infty$ that in this situation we have $\delta_\infty=\infty$. As mentioned above there is nothing to prove in this case. 

Now assume that $(\Sigma,\sigma)$ satisfies the $\F-$property. In the proof of Theorem \ref{thm:main} we did not use the fact that the topological entropy of $(\Sigma,\sigma)$ is finite, we only used that our CMS has the $\F-$property and that $\delta_\infty$ is finite--those follow trivially under the finite entropy assumption. The $\F-$property is  crucially used in Proposition \ref{prop:atom} and Lemma \ref{lem:Kineq2}. If $\delta_\infty$ is finite, then Theorem \ref{thm:main} implies that the entropy map is upper semi-continuous, which would contradict Remark \ref{rem:nousc} if $h_{top}(\sigma)$ is infinite. We conclude that the topological entropy of $(\Sigma,\sigma)$ is finite.
\end{proof}

\subsection{Suspension flows}

Let $(\Sigma, \sigma)$ be a transitive, finite entropy  CMS and  $\tau: \Sigma \to \R^+$  a potential  bounded away from zero. Let $$Y:= \left\{ (x,t)\in \Sigma  \times \R \colon 0 \le t \le\tau(x) \right\},$$
with the points $(x,\tau(x))$ and $(\sigma(x),0)$ identified for each $x\in \Sigma $. The \emph{suspension flow} over $\Sigma$
with \emph{roof function} $\tau$ is the semi-flow $\Phi= (\phi_t)_{t \in \R_{\ge 0}}$ on $Y$ defined by
$ \phi_t(x,s)= (x,s+t)$ whenever $s+t\in[0,\tau(x)]$. Denote  by $\mathcal{M}(Y,\Phi)$ the space of flow invariant probability measures.  In this section we prove that in this continuous time, non-compact setting again the entropy map is upper semi-continuous. This generalises  \cite[Proposition 5.2]{itv} in which upper semi-continuity of the entropy map was proven for ergodic measures. Let
 \begin{equation}
\mathcal{M}_\sigma(\tau):= \left\{ \mu \in \mathcal{M}_{\sigma}: \int \tau ~d \mu < \infty \right\}.
\end{equation}
A result by Ambrose and Kakutani \cite{ak} implies that the map $M \colon \mathcal{M}_\sigma \to \mathcal{M}_\Phi$, defined by
\begin{equation*} \label{eq:R map}
M(\mu)=\frac{(\mu \times  \text{Leb})|_{Y} }{(\mu \times  \text{Leb})(Y)},
\end{equation*}
where  \text{Leb} is the one-dimensional Lebesgue measure, is a bijection.
The following result proved in \cite[Lemma 5.1]{itv}  describes the relation between weak$^*$ convergence in $\M(Y,\Phi)$ with that in $\M(\Sigma, \sigma)$.

\begin{lemma} \label{lem:weak} 
Let $(\nu_n), \nu \in \M(Y,\Phi)$ be flow invariant probability measures such that
\begin{equation*}
\nu_n=\frac{\mu_n \times Leb}{\int \tau~d \mu_n} \quad \text{ and } \quad \nu= \frac{\mu \times Leb}{\int \tau~d \mu} 		
\end{equation*}
where $(\mu_n)_n , \mu \in \M(\Sigma, \sigma)$.  If the sequence $(\nu_n)_n$ converges weak$^*$ to $\nu$ then
$(\mu_n)_n$ converges weak$^*$ to $\mu$   and  $\lim_{n \to \infty} \int \tau~d \mu_n = \int \tau~d\mu$. 		
\end{lemma}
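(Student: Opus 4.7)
The plan is to test the weak$^*$ convergence $\nu_n \to \nu$ against a carefully chosen family of bounded continuous functions on $Y$ of product form, and then extract both conclusions from a single identity. Set $\tau_0 := \inf_{x \in \Sigma} \tau(x) > 0$, which is strictly positive by hypothesis. For any $f \in C_b(\Sigma)$ and any continuous $\phi : \R \to \R$ with $\supp(\phi) \subset (0, \tau_0)$, I would define $F : Y \to \R$ by $F(x,s) := f(x)\phi(s)$ on representatives $(x,s)$ with $0 \le s \le \tau(x)$. Because $\phi$ vanishes at $s = 0$ and at every $s = \tau(x)$, the function $F$ respects the identification $(x, \tau(x)) \sim (\sigma(x), 0)$ and descends to a bounded continuous function on $Y$.

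Using the explicit formula for $\nu_n$ and the inclusion $\supp(\phi) \subset (0, \tau_0) \subseteq (0, \tau(x))$ for every $x \in \Sigma$, a direct computation yields
\begin{equation*}
\int_Y F \, d\nu_n = \frac{1}{\int \tau \, d\mu_n}\int_\Sigma f(x) \left( \int_0^{\tau(x)} \phi(s)\, ds \right) d\mu_n(x) = \frac{\left(\int \phi \, ds\right)\left(\int f \, d\mu_n\right)}{\int \tau \, d\mu_n},
\end{equation*}
and similarly for $\nu$. Fixing one such $\phi$ with $\int \phi \, ds \neq 0$ and first taking $f \equiv 1$, the weak$^*$ convergence $\nu_n \to \nu$ gives
\begin{equation*}
\frac{1}{\int \tau \, d\mu_n} \longrightarrow \frac{1}{\int \tau \, d\mu}.
\end{equation*}
Since $\nu_n, \nu$ are probability measures and $\tau \ge \tau_0$, both reciprocals lie in $(0, \infty)$, so $\int \tau \, d\mu_n \to \int \tau \, d\mu$. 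Substituting this back into the identity above for a general $f \in C_b(\Sigma)$ then forces $\int f \, d\mu_n \to \int f \, d\mu$, i.e.\ $\mu_n \to \mu$ weak$^*$ on $\M(\Sigma, \sigma)$.

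The main technical point to verify is the very first step: that $F = f \otimes \phi$ genuinely defines a member of $C_b(Y)$ with respect to the quotient topology. This is precisely where the hypothesis $\tau_0 = \inf \tau > 0$ is crucial, since it allows $\phi$ to be compactly supported strictly inside $(0, \tau_0)$, so that $F$ vanishes identically in a neighbourhood of the identified boundary $\{(x, 0)\} \cup \{(x, \tau(x))\}$ and continuity across the identification is automatic. No additional regularity of $\tau$ is needed, because $\tau$ enters the argument only through the integrals $\int \tau \, d\mu_n$ and through the uniform lower bound $\tau \ge \tau_0$; in particular, the (possibly unbounded) integrand $\tau$ is never tested directly against $\nu_n$, which would have fallen outside the scope of weak$^*$ convergence.
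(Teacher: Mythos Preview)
The paper does not give its own proof of this lemma; it merely quotes the result as \cite[Lemma 5.1]{itv}. So there is no in-paper argument to compare against, and I can only assess your proposal on its own merits.

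Your argument is correct. The key identity
\[
\int_Y F\,d\nu_n \;=\; \frac{\bigl(\int \phi(s)\,ds\bigr)\bigl(\int f\,d\mu_n\bigr)}{\int \tau\,d\mu_n}
\]
is exactly what makes the proof work, and it holds precisely because $\supp(\phi)\subset(0,\tau_0)$ forces $\int_0^{\tau(x)}\phi(s)\,ds$ to be independent of $x$. Testing first with $f\equiv 1$ to recover $\int\tau\,d\mu_n\to\int\tau\,d\mu$, and then with a general $f\in C_b(\Sigma)$ to deduce weak$^*$ convergence of $(\mu_n)_n$, is clean and efficient. Your discussion of why $F=f\otimes\phi$ descends to a bounded continuous function on the quotient $Y$ is also correct: the lift $(x,s)\mapsto f(x)\phi(s)$ is continuous on the fundamental domain and vanishes on the identified boundary, so continuity in the quotient topology follows immediately. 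One small point worth making explicit is that the hypothesis $\nu_n,\nu\in\M(Y,\Phi)$ already forces $\int\tau\,d\mu_n$ and $\int\tau\,d\mu$ to be finite (otherwise the normalised product measure would not be a probability measure), so the reciprocals you manipulate are genuine positive reals; you allude to this, but it is the only place where one might worry about $\tau$ being unbounded.
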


\begin{proposition} \label{thm:susp}
Let $(\Sigma,\sigma)$ be a  transitive CMS of  finite topological entropy. Let $\tau$ be a potential bounded away from zero and $(Y,\Phi)$ the suspension flow of $(\Sigma,\sigma)$ with roof function $\tau$. Then the entropy map of  $(Y, \Phi)$ is upper semi-continuous.
\end{proposition}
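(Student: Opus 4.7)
The plan is to reduce the upper semi-continuity of the entropy map of the flow $(Y,\Phi)$ to the already established upper semi-continuity for the base system $(\Sigma,\sigma)$ (Theorem \ref{semicont}), using the Ambrose--Kakutani correspondence $M$ together with Abramov's formula
\begin{equation*}
h_{M(\mu)}(\Phi)=\frac{h_\mu(\sigma)}{\int \tau\, d\mu}, \qquad \mu\in \mathcal{M}_\sigma(\tau).
\end{equation*}
So fix a sequence $(\nu_n)_n$ in $\mathcal{M}(Y,\Phi)$ converging weak$^*$ to $\nu\in\mathcal{M}(Y,\Phi)$. Since $\tau$ is bounded away from zero, say $\tau\ge c>0$, every such flow-invariant probability measure is the image under $M$ of a unique $\sigma$-invariant probability measure integrating $\tau$. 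Write $\nu_n=M(\mu_n)$ and $\nu=M(\mu)$ with $\mu_n,\mu\in \mathcal{M}_\sigma(\tau)$.

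By Lemma \ref{lem:weak}, $(\mu_n)_n$ converges weak$^*$ to $\mu$ in $\mathcal{M}(\Sigma,\sigma)$ and $\int\tau\,d\mu_n\to \int\tau\,d\mu$. The uniform lower bound $\tau\ge c$ guarantees $\int\tau\,d\mu_n\ge c$ and $\int\tau\,d\mu\ge c$, so there is no issue in dividing. Theorem \ref{semicont} applies to the base (here we use that $(\Sigma,\sigma)$ has finite topological entropy), yielding $\limsup_{n\to\infty} h_{\mu_n}(\sigma)\le h_\mu(\sigma)$. Combining with the convergence of the integrals and Abramov's formula, we obtain
\begin{equation*}
\limsup_{n\to\infty} h_{\nu_n}(\Phi)=\limsup_{n\to\infty}\frac{h_{\mu_n}(\sigma)}{\int \tau\, d\mu_n}\le \frac{\limsup_{n\to\infty} h_{\mu_n}(\sigma)}{\int \tau\, d\mu}\le \frac{h_\mu(\sigma)}{\int \tau\, d\mu}=h_\nu(\Phi).
\end{equation*}
This is exactly upper semi-continuity of the entropy map on $\mathcal{M}(Y,\Phi)$.

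The only potentially delicate point is the applicability of Abramov's formula in this non-compact, non-ergodic setting; however, since $\tau$ is bounded below away from zero and $(\Sigma,\sigma)$ has finite topological entropy, both $h_{\mu_n}(\sigma)$ and $h_\nu(\Phi)$ are finite, and Abramov's identity in its standard form (which only needs $\int\tau\,d\mu<\infty$) applies without modification. The other subtlety, transferring weak$^*$ convergence from flow to base \emph{along with} convergence of $\int\tau\,d\mu_n$, is precisely what Lemma \ref{lem:weak} provides, so no further work is required.
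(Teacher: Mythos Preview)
Your proof is correct and follows essentially the same approach as the paper, which simply states that the result follows from Abramov's formula, Lemma \ref{lem:weak}, and Theorem \ref{thm:main} (equivalently Theorem \ref{semicont}). You have faithfully expanded this one-line sketch into a complete argument.
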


The proof directly follows from Abramov's formula \cite{ab}, Lemma \ref{lem:weak}  and Theorem \ref{thm:main}. Because of the similarities between the geodesic flow and the suspension flow over a Markov shift it is reasonable to expect that, under suitable assumptions on the roof function $\tau$, the suspension flow also satisfies an entropy inequality like Theorem \ref{thm:main}. This is in fact the case and will be discussed in \cite{ve2}. The space of invariant measures for the suspension flow was already investigated and described in \cite[Section 6]{iv}.

\subsection{Entropy density of ergodic measures}  \label{sec:ed}  

The structure of the space of invariant measures for finite entropy (non-compact) CMS was studied in \cite{iv}. In this non-compact setting it is well known that the space of ergodic measures is still dense in $\M(\Sigma,\sigma)$ (see \cite[Section 6]{csc}).  A natural question is whether the approximation by ergodic measures can be arranged so that the corresponding entropies also converge. 
If this is the case we say that the set of ergodic measures is \emph{entropy dense}. More precisely,

\begin{definition} \label{def:edense}
A subset $\mathcal{L} \subset \M(\Sigma,\sigma)$ is \emph{entropy dense} if for every measure $\mu \in \M(\Sigma,\sigma)$ there exists a sequence $(\mu_n)_n$ in $\mathcal{L}$ such that
\begin{enumerate}
\item $(\mu_n)_n$ converges to $\mu$ in the weak$^*$ topology.
\item $\lim_{n\to\infty} h_{\mu_n}(\sigma)=h_\mu(\sigma)$.
\end{enumerate} 
\end{definition}
Results proving that certain classes of measures are entropy dense have been obtained for different  dynamical systems defined on compact spaces by Katok \cite{ka}, Orey \cite{or},  F\"ollmer and Orey \cite{fo}, Eizenberg, Kifer and  Weiss \cite{ekw} and by Gorodetski and Pesin \cite{gp} among others. In this section we prove, for the non-compact setting of finite  entropy CMS, that the set of ergodic measures $\E(\Sigma,\sigma)$ is entropy dense. 
 
\begin{theorem}\label{teodense}
Let  $(\Sigma, \sigma)$ be a finite entropy, transitive CMS and  $\mu \in \M(\Sigma,\sigma)$. Then there exists a sequence $(\mu_n)_{n }$ of ergodic measures such that 
$(\mu_n)_n$ converges to $\mu$ in the weak$^*$ topology and  $\lim_{n\to\infty} h_{\mu_n}(\sigma)=h_\mu(\sigma)$, i.e., $\E(\Sigma, \sigma)$ is entropy dense.  Moreover, it is possible to choose the sequence so that each $\mu_n$ has compact support.
\end{theorem}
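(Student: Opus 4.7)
The plan is to combine Proposition~\ref{dense} (weak$^*$ approximation from below in entropy by ergodic, compactly supported measures) with the upper semi-continuity of the entropy map established in Theorem~\ref{semicont}, so as to sandwich the entropies from both sides and force equality in the limit.

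Two preliminary observations make this strategy work cleanly. First, since $(\Sigma,\sigma)$ has finite topological entropy, every invariant probability measure satisfies $h_\mu(\sigma)\le h_{top}(\sigma)<\infty$, so Proposition~\ref{dense} is applicable to $\mu$. Second, because $\Sigma$ is a Polish space, the weak$^*$ topology on the space of Borel probability measures on $\Sigma$ is metrizable; fix a compatible metric $d_W$.

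For each $n\in\mathbb{N}$, the $d_W$-ball of radius $1/n$ around $\mu$ contains a basic weak$^*$ open neighbourhood of the form $V(f_1^{(n)},\ldots,f_{\ell_n}^{(n)},\mu,\eps_n)$ as defined in~\eqref{defbasis}. Applying Proposition~\ref{dense} with this choice of test functions and with $\eta=1/n$, I obtain an ergodic measure $\mu_n\in\M(\Sigma,\sigma)$ with compact support satisfying
\[
d_W(\mu_n,\mu)<\tfrac{1}{n},\qquad h_{\mu_n}(\sigma)>h_\mu(\sigma)-\tfrac{1}{n}.
\]
By construction the sequence $(\mu_n)_n$ converges weak$^*$ to $\mu$, and the entropy estimate immediately yields $\liminf_{n\to\infty}h_{\mu_n}(\sigma)\ge h_\mu(\sigma)$. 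The matching upper bound $\limsup_{n\to\infty}h_{\mu_n}(\sigma)\le h_\mu(\sigma)$ is exactly the content of Theorem~\ref{semicont} applied to this convergent sequence. Therefore $\lim_{n\to\infty}h_{\mu_n}(\sigma)=h_\mu(\sigma)$, and each $\mu_n$ has compact support as required by the moreover clause.

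There is no genuine obstacle here: the substantive work has already been carried out elsewhere in the paper. Proposition~\ref{dense} was established in Section~\ref{sec:wed} by a Katok-style concatenation construction adapted to the non-compact setting, and Theorem~\ref{semicont} is a direct consequence of the main entropy inequality Theorem~\ref{thm:main} (using $|\mu|=1$ together with Lemma~\ref{restriction} to pass between cylinder and weak$^*$ convergence). Once those two ingredients are in hand, Theorem~\ref{teodense} follows by what is essentially a diagonal argument.
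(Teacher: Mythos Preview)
Your proof is correct and follows exactly the approach the paper uses: the paper's own proof simply states that the result follows by combining Theorem~\ref{semicont} (upper semi-continuity) with Proposition~\ref{dense} (weak entropy density), and you have spelled out the details of that combination carefully.
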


The proof of this result directly  follows combining  Theorem \ref{semicont}, where the upper semi-continuity of the entropy map is proved, and Proposition \ref{dense}, where we proved a weak form of entropy density of the set of ergodic measures.  Note that the entropy density property of ergodic measures is an important tool in proving large deviations principles via the orbit-gluing technique (see, for example, \cite{ekw} and \cite{fo}).

\subsection{Points that escape on average} 

In this section  we relate the Hausdorff dimension of the set of  recurrent points that escape on average with the entropy at infinity of $(\Sigma,\sigma)$. Recall we have fixed an identification of the alphabet of $(\Sigma,\sigma)$ with $\N$.

\begin{definition}
Let $(\Sigma, \sigma)$ be a CMS, the set of points that \emph{escape on average} is defined by
\begin{equation*}
E:=\left\{ x \in \Sigma : \lim_{n \to \infty} \frac{1}{n} \sum_{i=0}^{n-1} 1_{[a]}(\sigma^i x)=0, \text{ for every } a \in \N 	\right\}. 
\end{equation*}
We say that $x\in\Sigma$ is a \emph{recurrent point} if there exists an increasing sequence $(n_k)_k$ such that $\lim_{k\to\infty}\sigma^{n_k}(x)=x$. The set of recurrent points is denoted by $\mathcal{R}$. 
\end{definition}

A version of the set $E$ has been considered in the context of homogeneous dynamics. Interest in that set stems from work of Dani \cite{da} in the mid 1980s who proved that singular matrices are  in  one-to-one correspondence with certain divergent orbits of one parameter diagonal groups on the space of lattices.  For example, Einsiedler and Kadyrov \cite[Corollary 1.7]{ek}  computed the Hausdorff dimension of that set in the setting of $SL_3(\Z) \backslash SL_3(\R)$. In the context of unimodular $(n+m)-$lattices an upper bound for the Hausdorff dimension of the set of points that escape on average has been obtained in \cite[Theorem 1.1]{kklm}.  More recently, for the Teichm\"uller geodesic flow,  in \cite[Theorem 1.8]{aaekmu} the authors prove an upper bound for the Hausdorff dimension of directions in which Teichm\"uller geodesics escape on average in a stratum. In all the above mentioned work, either explicitly or not, the bounds are related to the entropy at infinity of the system. Our next result establishes an analogous result for CMS. In this case the upper bound is the entropy at infinity divided by $\log 2$. This latter constant comes from the metric we consider in the space (see  \eqref{metric}) and can be thought of as the Lyapunov exponent of the system. 

 \begin{theorem}\label{thm:onave}
Let $(\Sigma, \sigma)$ be a  finite entropy transitive CMS. Then
\begin{equation*}
\dim_H(E\cap \mathcal{R}) \leq \frac{\delta_{\infty}}{\log 2}
\end{equation*}
where $\dim_H$ denotes the Hausdorff dimension with respect to the metric \eqref{metric}.
\end{theorem}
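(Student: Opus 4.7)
The plan is to produce, for any parameters $M, q$, an efficient cover of $E\cap\mathcal{R}$ by cylinders counted by $z_n(M,q)$, and then let $M,q\to\infty$. Since Hausdorff dimension is countably stable, it suffices to bound $\dim_H(X_a)$ for each $a\in\N$, where
\[
X_a := E\cap\mathcal{R}\cap [a].
\]
Fix $\eta>0$ and choose $q\ge a$ and $M\in\N$ large enough that $\delta_\infty(M,q)<\delta_\infty+\eta$, which is possible by the definition of $\delta_\infty$ and the monotonicity in $q$.

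First I would show that every $x\in X_a$ lies in infinitely many cylinders from the families that define $z_n(M,q)$. Since $x\in\mathcal{R}$ there is a sequence $n_k\to\infty$ with $\sigma^{n_k}(x)\to x$, and convergence in the cylinder topology forces $x_{n_k}=x_0=a$ for all large $k$. Combined with $x\in E$, which gives $\frac{1}{N}\#\{i<N:x_i\le q\}\to 0$, we can select $m_k\to\infty$ with $x_{m_k}=a\le q$ and $\#\{i\in\{0,\dots,m_k\}:x_i\le q\}\le (m_k+1)/M$. Setting $n=m_k-1$, the cylinder $[x_0,\dots,x_{n+1}]$ has $x_0\le q$, $x_{n+1}=a\le q$, and $\#\{i\in\{0,\dots,n+1\}:x_i\le q\}\le (n+2)/M$, so it is one of the $z_n(M,q)$ cylinders in Definition \ref{def:ent_inf}. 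In particular, for every $n_0$, the set $X_a$ is covered by the union, over $n\ge n_0$, of these cylinders.

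Next I would run the Hausdorff dimension estimate. Each such cylinder has length $n+2$, hence diameter $2^{-(n+1)}$ in the metric \eqref{metric}. Pick $\alpha>(\delta_\infty+2\eta)/\log 2$. Since $\delta_\infty(M,q)=\limsup_n \frac{1}{n}\log z_n(M,q)<\delta_\infty+\eta$, there is $N_0$ with $z_n(M,q)\le e^{n(\delta_\infty+2\eta)}$ for all $n\ge N_0$. Therefore, for $n_0\ge N_0$,
\[
H^\alpha_{2^{-n_0}}(X_a)\le \sum_{n\ge n_0} z_n(M,q)\cdot 2^{-(n+1)\alpha}\le 2^{-\alpha}\sum_{n\ge n_0} e^{n(\delta_\infty+2\eta-\alpha\log 2)},
\]
which is the tail of a convergent geometric series and tends to $0$ as $n_0\to\infty$. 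Hence $H^\alpha(X_a)=0$ and $\dim_H(X_a)\le\alpha$. Letting $\alpha\downarrow(\delta_\infty+2\eta)/\log 2$ and then $\eta\downarrow 0$ yields $\dim_H(X_a)\le\delta_\infty/\log 2$. Finally, $E\cap\mathcal{R}=\bigcup_{a\in\N} X_a$ gives $\dim_H(E\cap\mathcal{R})\le\delta_\infty/\log 2$.

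The main delicate point is the combined use of recurrence and the escape-on-average condition to place $x$ inside cylinders matching the precise combinatorial definition of $z_n(M,q)$; in particular, recurrence is what provides the right endpoint $x_{n+1}\le q$ (without which the orbit, having only finitely many visits per unit time to $\{x_i\le q\}$, need not end in $\{\cdot\le q\}$ at controllable times). Once this combinatorial inclusion is in place, the dimension bound is a direct covering computation governed by the exponential growth rate $\delta_\infty(M,q)$ and the Lipschitz constant $\log 2$ of the metric.
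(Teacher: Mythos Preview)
Your proposal is correct and follows essentially the same route as the paper's proof: decompose $E\cap\mathcal{R}$ into countably many pieces indexed by the starting symbol, use recurrence to produce infinitely many return times to that symbol, combine with the escape-on-average condition to place each point in cylinders satisfying the combinatorial constraint defining $z_n(M,q)$, and then run a standard Hausdorff covering estimate controlled by $\delta_\infty(M,q)$. One harmless slip: a cylinder of length $n+2$ has diameter $2^{-(n+2)}$ rather than $2^{-(n+1)}$ in the metric \eqref{metric}, but this only improves your bound.
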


Before initiating the proof of Theorem \ref{thm:onave} let us set up some notation. Given natural numbers $a, b, q, m$ and $N$ we define $S^q_{a,b}(N,m)$ as the collection of cylinders of the form $[x_0,...,x_{L-1}]$, where $L\ge Nm$, $x_0=a, x_{L-1}=b,$ and the number of indices $i\in\{0,...,L-1\}$ such that $x_i\le q$ is exactly $N$. It will be convenient to define 
$$H_{a,b}^q(n,m):=\bigcup_{N\ge n}S_{a,b}^q(N,m).$$ 
Finally define 
$$\mathcal{L}_b:=\{x \in\Sigma: \exists (n_k)_k \text{ strictly increasing such that } \sigma^{n_k}(x)\in[b], \forall k\in\N\},$$ 
and $\mathcal{L}=\bigcup_{b\in \N} \mathcal{L}_b$. 

\begin{remark}\label{rem:enddd} Let $a, b, q$ and $m$ be natural numbers. Assume that $q\ge b$. Note that if $x\in \left(E\cap\cL_b\cap [a] \right)$, then there exists $s_0 \in \N$  such that 
$$\# \left\{i\in\{0,...,s-1\}: x_i\le q \right\}\le \frac{s}{m},$$
for every $s\ge s_0$. Moreover, there exists an  increasing sequence $(n_k)_{k}$ such that $x_{n_k}=b$. Define $\T_k(x)=\#\{i\in\{0,...,n_k-1\}: x_i\le q\}$. Since $q\ge b$ we get that $\T_k(x)\ge k$.  Observe that if $n_k\ge s_0$, then 
$$m\mathcal{T}_k(x)= m\#\{i\in\{0,...,n_k-1\}: x_i\le q\}\le n_k.$$
We conclude that $$[x_0,...,x_{n_k-1}]\in S^q_{a,b}(\mathcal{T}_k(x),m)\subset \bigcup_{p\ge k}S^q_{a,b}(p,m)=H_{a,b}^q(k,m).$$ This gives us the inclusion 
\begin{align}\label{eq:cov}
\left( E\cap \cL_b\cap [a] \right) \subset \bigcup_{C\in H_{a,b}^q(k,m)} C,\end{align}
for every $k\in \N$. 
\end{remark}

\begin{proof}[Proof of Theorem~\ref{thm:onave}] First observe  that $\left(E\cap\cR \right)\subset  \bigcup_{b\in\N} \left(E\cap\cL_b\cap [b] \right)$. In particular it suffices to prove that $\dim_H(E\cap \mathcal{L}_b\cap[a])\le \delta_\infty / \log 2$  , for every pair of natural numbers $a$ and $b$. Fix $t> \delta_\infty/\log 2$. Recall that $\delta_\infty=\inf_{m,q}\delta_\infty(m,q)$ (see equation \eqref{infinf}). Choose $m$ and $q$ large enough so that $t>\delta_\infty(q,m)/\log 2$, and that $q\ge \max\{a,b\}$. Observe that we are now in the same setup as in Remark \ref{rem:enddd}.

 In order to estimate the Hausdorff dimension of $E\cap \cL_b\cap[a]$ we will use the covering  given by \eqref{eq:cov}. Thus, it is enough to bound $\sum_{C\in H_{a,b}^q(k,m)} \diam(C)^t$.  First observe that  since  $q\ge\max\{a,b\}$,  
 a cylinder $C\in H_{a,b}^q(k,m)$ has length $\ell(C)\ge k$. Recall that $\diam(C)\le 2^{-\ell(C)}=e^{-(\log 2)\ell(C) }$. Therefore, as $k \in \N$ increases the diameter of the covering given by \eqref{eq:cov} converges to zero. Now observe that 

\begin{align*}\sum_{C\in H_{a,b}^q(k,m)} \diam(C)^t&\le  \sum_{C\in H_{a,b}^q(k,m)} e^{-t(\log 2)\ell(C)}\\
&=\sum_{l\ge k}e^{-t(\log 2) l}\#\{C: C\in H_{a,b}^q(k,m)\text{ and }\ell(C)=\ell\}\\ 
&\le\sum_{l\ge k} e^{-t(\log 2) l}z_{l-2}(m,q). \end{align*}
In the last inequality we used that $$\#\{C\in H_{a,b}^q(k,m)\text{ and }\ell(C)=l\}\le z_{l-2}(m,q).$$ Indeed, if $C\in H_{a,b}^q(k,m)\text{ and }\ell(C)=\ell$, then $C$ is a cylinder of the form $[x_0,...,x_{\ell-1}]$ where $x_0=a$, $x_{l-1}=b$, and $$\#\{i\in\{0,...,\ell-1\}: x_i\le q\}=k\le \frac{\ell}{m}.$$
Since $\max\{a,b\}\le q$ we conclude that $C$ is one of the cylinders  counted in the definition of $z_{\ell-2}(m,q)$ (see Definition \ref{def:ent_inf}). 

By the definition of $\delta_\infty(m,q)$ the series $Z(s):=\sum_{\ell=2}^\infty e^{-s\ell}z_{l-2}(m,q)$ is convergent for $s>\delta_\infty(m,q)$. In particular since $t\log 2>\delta_\infty(m,q)$ we have that $Z(t\log 2)$ is finite. This implies that the tail of $Z(t\log 2)$ converges to zero. We conclude that $\sum_{C\in H_{a,b}^q(k,m)} \diam(C)^t$ goes to zero as $k\to\infty$. This implies that $\dim_H(E\cap \cL_b\cap[a])\le t$, but $t$ was an arbitrary number larger than $\delta_\infty / \log 2$.\end{proof}

\begin{remark} It is proved in \cite[Theorem 3.1]{i} that if $(\Sigma,\sigma)$ is a transitive CMS with finite topological entropy, then $\dim_H(\cR)=h_{top}(\sigma)/\log 2$. In particular if $(\Sigma,\sigma)$ is SPR, then $\dim_H(E\cap\cR)<\dim_H(\cR)$. 
\end{remark}

 \subsection{Measures of maximal entropy}\label{sec:mme}  
 An invariant measure $\mu \in \M(\Sigma, \sigma)$ is called a \emph{measure of maximal entropy} if $h_{\mu}(\sigma)= h_{top}(\sigma)$. 
 It follows from work by Gurevich \cite{gu1,gu2} that if $h_{top}(\sigma)<\infty$ then there exists at most one measure of maximal entropy. Note that a direct consequence of the variational principle (see \cite{gu2} or 
 Theorem \ref{thm:vp}) is that there exists a sequence of invariant probability measures $(\mu_n)_n$ such that $\lim_{n \to\infty} h_{\mu_n}(\sigma)= h_{top}(\sigma)$. 
Moreover,  if the sequence has a weak$^*$ accumulation point $\mu$ then it follows from the upper semi-continuity of the entropy map, see Theorem \ref{semicont}, that $h_{\mu}(\sigma)=h_{top}(\sigma)$.  Since the space $\M(\Sigma, \sigma)$ is not compact there are cases in which the sequence $(\mu_n)_n$ does not have an accumulation point. In fact, there exist transitive finite entropy CMS that do not have measures of maximal entropy (see \cite{ru2} for a wealth of explicit examples). Our next result follows directly from Theorem \ref{thm:main} and Theorem \ref{compact}. Recall that $(\Sigma,\sigma)$ is SPR if and only if $\delta_\infty<h_{top}(\sigma)$ (see  Proposition \ref{prechar}).

\begin{theorem} \label{thm:mme}
Let $(\Sigma, \sigma)$ be a SPR CMS and $(\mu_n)_{n}$  a sequence of $\sigma$-invariant probability measures such that 
\begin{equation*}
\lim_{n\to\infty}h_{\mu_n}(\sigma)=h_{top}(\sigma).
\end{equation*}
Then the sequence $(\mu_n)_{n }$  converges in the weak$^*$ topology to the unique measure of maximal entropy. 
\end{theorem}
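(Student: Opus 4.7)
The plan is to exploit the compactness of $\M_{\le 1}(\Sigma,\sigma)$ in the cylinder topology to extract subsequential limits, then use Theorem \ref{thm:main} together with the strict inequality $\delta_\infty < h_{top}(\sigma)$ (which is the SPR characterisation from Proposition \ref{prechar}) to rule out any escape of mass.

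First I would recall that SPR implies positive recurrence, so Theorem \ref{clas} gives the existence of a unique measure of maximal entropy $\mu_{\max}$; moreover finite entropy plus transitivity yields the $\F$-property, so Theorem \ref{compact} applies. The strategy is standard: it suffices to prove that \emph{every} subsequence of $(\mu_n)_n$ admits a further subsequence converging weak$^*$ to $\mu_{\max}$. Given any subsequence, by Theorem \ref{compact} we can extract a further subsequence, still denoted $(\mu_n)_n$, converging on cylinders to some $\mu \in \M_{\le 1}(\Sigma,\sigma)$.

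Next I would apply Theorem \ref{thm:main} to this subsequence. Using that $\lim_n h_{\mu_n}(\sigma) = h_{top}(\sigma)$, we obtain
\begin{equation*}
h_{top}(\sigma) \le |\mu|\, h_{\mu/|\mu|}(\sigma) + (1-|\mu|)\delta_\infty.
\end{equation*}
Since $h_{\mu/|\mu|}(\sigma) \le h_{top}(\sigma)$ by the variational principle (interpreting the entropy term as $0$ when $|\mu|=0$, consistent with the convention in the statement of Theorem \ref{thm:main}), rearranging gives
\begin{equation*}
(1-|\mu|)\bigl(h_{top}(\sigma) - \delta_\infty\bigr) \le 0.
\end{equation*}
The SPR hypothesis $\delta_\infty < h_{top}(\sigma)$ then forces $|\mu|=1$, so no mass escapes. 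Substituting $|\mu|=1$ back into the inequality yields $h_\mu(\sigma) \ge h_{top}(\sigma)$, hence equality, so by the uniqueness part of Theorem \ref{clas} we conclude $\mu = \mu_{\max}$.

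Finally, since every cylinder-topology subsequential limit coincides with $\mu_{\max}$, compactness of $\M_{\le 1}(\Sigma,\sigma)$ in the cylinder topology (Theorem \ref{compact}) implies that the entire sequence $(\mu_n)_n$ converges on cylinders to $\mu_{\max}$. Because $\mu_{\max}$ is itself a probability measure, Lemma \ref{restriction} upgrades this to weak$^*$ convergence, completing the proof. The only delicate point is making sure that the interpretation of the right-hand side of \eqref{eq:main} in the $|\mu|=0$ case is handled correctly: in that case the bound reads $h_{top}(\sigma) \le \delta_\infty$, which directly contradicts SPR, so this degenerate case is immediately excluded — no real obstacle arises.
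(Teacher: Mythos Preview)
Your proposal is correct and follows essentially the same route as the paper's proof: extract subsequential limits via Theorem~\ref{compact}, apply Theorem~\ref{thm:main} together with the SPR inequality $\delta_\infty<h_{top}(\sigma)$ to force $|\mu|=1$ and $h_\mu(\sigma)=h_{top}(\sigma)$, then invoke uniqueness of the measure of maximal entropy. Your write-up is in fact slightly more careful than the paper's in two places: you explicitly invoke Lemma~\ref{restriction} to pass from cylinder convergence to weak$^*$ convergence, and you treat the degenerate case $|\mu|=0$ separately.
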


\begin{proof} Note that the inequality $\delta_\infty<h_{top}(\sigma)$ immediately implies that $(\Sigma,\sigma)$ has finite topological entropy (see Proposition \ref{prop:iff}). Since $\M_{\le1}(\Sigma,\sigma)$ is compact (see Theorem \ref{compact}) there exists a subsequence $(\mu_{n_k})_k$ which converges on cylinders to $\mu\in \M_{\le1}(\Sigma,\sigma)$. It follows directly from Theorem  \ref{thm:main} that
\begin{equation*}
h_{top}(\sigma)= \limsup_{k\to \infty} h_{\mu_{n_k}}(\sigma)\le |\mu|h_{\mu/|\mu|}(\sigma)+(1-|\mu|)\delta_\infty.
\end{equation*}
Recall that $\delta_{\infty} < h_{top}(\sigma)$. If $|\mu| <1$ then the right hand side of the equation is a convex combination of numbers, one of which is strictly smaller than $h_{top}(\sigma)$. Since this is not possible we have that $|\mu|=1$. In particular
\begin{equation*}
h_{top}(\sigma) \leq h_{\mu}(\sigma).
\end{equation*}
That is, $\mu$ is a measure of maximal entropy. We conclude that $(\Sigma,\sigma)$ has a measure of maximal entropy. The same argument holds for every subsequence of $(\mu_n)_n$, this implies that the entire sequence $(\mu_n)_n$ converges in the weak$^*$ topology to the unique measure of maximal entropy. 
\end{proof}

In fact Theorem \ref{thm:main} also gives a complete description of non strongly positive recurrence, as follows.  Some of these results were originally proved in \cite[Theorem 6.3]{gs} by different methods. 

\begin{theorem}\label{thm:mme2}
Let $(\Sigma, \sigma)$ be a transitive CMS of finite entropy.
\begin{enumerate}
\item Suppose $(\Sigma,\sigma)$ does not admit a measure of maximal entropy. Let $(\mu_n)_{n }$ be a sequence of $\sigma$-invariant probability measures such that $\lim_{n\to\infty}h_{\mu_n}(\sigma)=h_{top}(\sigma)$. Then  $(\mu_n)_{n }$  converges on cylinders to the zero measure and  $\delta_\infty=h_{top}(\sigma)$. 
\item Suppose that $(\Sigma,\sigma)$ is positive recurrent, but $h_{top}(\sigma)=\delta_\infty$. Let $(\mu_n)_{n}$ be a sequence of $\sigma$-invariant probability measures such that $\lim_{n\to\infty}h_{\mu_n}(\sigma)=h_{top}(\sigma)$. Then the accumulation points of  $(\mu_n)_{n}$  lie in the set $\{ \lambda \mu_{max}: \lambda \in [0,1]\}$, where $\mu_{max}$ is the measure of maximal entropy. Moreover, every measure in $\{ \lambda \mu_{max}:t\in [0,1]\}$ can be realised as such limit. 
\end{enumerate}
\end{theorem}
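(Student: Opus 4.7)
The plan is to reduce both parts to Theorem~\ref{thm:main} combined with compactness of $\M_{\le 1}(\Sigma, \sigma)$ in the cylinder topology. Since finite topological entropy implies the $\F$-property, Theorem~\ref{compact} provides such compactness, so every sequence $(\mu_n)_n$ has a cylinder-convergent subsequence with limit $\mu \in \M_{\le 1}(\Sigma, \sigma)$, and Theorem~\ref{thm:main} applied to any such subsequence gives
\begin{equation*}
h_{top}(\sigma) \le |\mu| h_{\mu/|\mu|}(\sigma) + (1-|\mu|) \delta_\infty.
\end{equation*}
Since $h_{\mu/|\mu|}(\sigma) \le h_{top}(\sigma)$ and $\delta_\infty \le h_{top}(\sigma)$, the right-hand side is a convex combination of numbers $\le h_{top}(\sigma)$, and equality forces each term with positive weight to equal $h_{top}(\sigma)$.

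For part (a), I would rule out $|\mu|>0$: if $|\mu|=1$ then $\mu$ would itself be a measure of maximal entropy; if $0<|\mu|<1$ the convex combination forces $h_{\mu/|\mu|}(\sigma)=h_{top}(\sigma)$, so again $\mu/|\mu|$ is a measure of maximal entropy. Both contradict the standing hypothesis, so $|\mu|=0$, the inequality collapses to $h_{top}(\sigma)\le\delta_\infty$, and combined with the trivial bound $\delta_\infty\le h_{top}(\sigma)$ this gives $\delta_\infty = h_{top}(\sigma)$. Because every cylinder-convergent subsequence has limit the zero measure and $\M_{\le 1}(\Sigma, \sigma)$ is compact metrizable, the full sequence converges on cylinders to zero.

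For part (b), the hypothesis $\delta_\infty = h_{top}(\sigma)$ reduces the inequality to $|\mu| h_{top}(\sigma) \le |\mu| h_{\mu/|\mu|}(\sigma)$, so either $|\mu|=0$ or $\mu/|\mu|$ attains the topological entropy; by Gurevich's uniqueness (Theorem~\ref{clas}) the latter equals $\mu_{max}$, so $\mu = |\mu|\mu_{max}$ in either case, which shows cylinder-accumulation points lie in $\{\lambda\mu_{max} : \lambda\in[0,1]\}$. For the ``moreover'' statement, given $\lambda\in[0,1]$ I would set $\mu_n := \lambda\mu_{max} + (1-\lambda)\nu_n$ where $(\nu_n)_n$ is a sequence of invariant probability measures converging on cylinders to zero with $h_{\nu_n}(\sigma)\to h_{top}(\sigma)$. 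Such $(\nu_n)_n$ exists because $h_\infty=\delta_\infty=h_{top}(\sigma)$ by Theorem~\ref{thm:vpinf}, combined with a diagonal extraction that uses the uniform upper bound $h_{\nu_n}(\sigma)\le h_{top}(\sigma)$ to promote the $\limsup$ in the definition of $h_\infty$ to an honest limit along a subsequence. Affinity of the entropy map then yields $h_{\mu_n}(\sigma)\to h_{top}(\sigma)$, and cylinder convergence $\mu_n\to\lambda\mu_{max}$ is immediate from $\nu_n\to 0$.

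The main obstacle is the realizability step of the ``moreover'' part: producing a single sequence of invariant probabilities that simultaneously escapes on cylinders and has entropies approaching $h_{top}(\sigma)$ relies crucially on the variational principle at infinity (Theorem~\ref{thm:vpinf}), which is the non-trivial input turning a supremum-defined quantity into an actual achievable limit.
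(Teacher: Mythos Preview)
Your proposal is correct and follows essentially the same approach as the paper: both parts reduce to Theorem~\ref{thm:main} plus compactness of $\M_{\le 1}(\Sigma,\sigma)$ (Theorem~\ref{compact}), with the ``moreover'' clause of part~(b) handled by forming convex combinations $\lambda\mu_{max}+(1-\lambda)\nu_n$ where $(\nu_n)_n\to 0$ with entropies tending to $h_{top}(\sigma)$, the existence of such $(\nu_n)_n$ coming from Theorem~\ref{thm:vpinf}. Your diagonal-extraction remark is a fine way to pass from the $\sup\limsup$ defining $h_\infty$ to a single sequence; the paper sidesteps this by invoking the explicit construction inside the proof of Theorem~\ref{thm:vpinf}, which already yields $h_{\mu_n}(\sigma)\ge\delta_\infty-\tfrac{1}{n}$.
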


\begin{proof}
Note that part $(a)$ directly follows from Theorem \ref{thm:main}. Indeed, if a sequence $(\mu_n)_{n }$ with $\lim_{n\to\infty}h_{\mu_n}(\sigma)=h_{top}(\sigma)$ converges in cylinder to a measure $\mu \in \M_{\leq 1}(\Sigma, \sigma)$ different from the zero measure then $\mu/|\mu|$ would be a measure of maximal entropy. This argument also gives us the first part of $(b)$, that is, the accumulation points of $(\mu_n)_n$ lie in $\{\lambda\mu_{max}:\lambda\in[0,1]\}$.  As for the second part of $(b)$,  by  Theorem \ref{thm:vpinf} there exists a  sequence $(\mu_n)_n$ in   $\M(\Sigma,\sigma)$ with $\lim_{n \to \infty} h_{\mu_n}(\sigma)=h_{top}(\sigma)$ such that  that $(\mu_n)_n$ converges on cylinders to the zero measure. Since there exist a measure of maximal entropy $\nu$ we have that for every $\lambda \in [0,1]$ the sequence $\rho_n:=\lambda \nu +(1-\lambda)\mu_n$ converges on cylinders to $\lambda\nu$ and $\lim_{n \to \infty} h_{\rho_n}(\sigma) =h_{top}(\sigma)$.
\end{proof}

\subsection{Existence of equilibrium states}\label{sec:eqst}
In this section we will always assume that $(\Sigma,\sigma)$ is a transitive CMS with finite entropy. In Section~\ref{sec:tf} we described the thermodynamic formalism developed by Sarig in the setting of CMS and  functions (potentials) of summable variations. It turns out that the same methods can be extended and thermodynamic formalism can be developed for functions with weaker regularity assumptions (for example functions satisfying the Walters condition \cite{sabook}). However, these methods can not  be extended much further. In this section we propose an alternative definition of pressure that generalises the Gurevich pressure to the space of functions  $C_0(\Sigma)$ (see Definition \ref{C_0}). We stress that  these functions are just uniformly continuous. Making use of  Theorem \ref{thm:main} we can ensure the existence of equilibrium states. 
 
The following result is  a direct consequence of Theorem \ref{thm:main} and the continuity of the map $\mu\mapsto \int Fd\mu$, when $F\in C_0(\Sigma)$ and $\mu$ ranges in $\M_{\le1}(\Sigma,\sigma)$ endowed with the cylinder topology. 

\begin{theorem}\label{ineqC_0} Let $(\Sigma,\sigma)$ be a transitive CMS with finite entropy and $F\in C_0(\Sigma)$. Let $(\mu_n)_n$ be a sequence in $\M(\Sigma,\sigma)$ converging on cylinders to $\lambda \mu$, where $\lambda\in [0,1]$ and $\mu\in \M(\Sigma,\sigma)$. Then
$$\limsup_{n\to\infty}\left( h_{\mu_n}(\sigma)+\int Fd\mu_n\right)\le \lambda \left(h_{\mu}(\sigma)+\int Fd\mu\right)+(1-\lambda)\delta_\infty.$$
\end{theorem}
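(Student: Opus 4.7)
The plan is to deduce this statement from Theorem~\ref{thm:main} together with the characterisation of $C_0(\Sigma)$ as the space of test functions for the cylinder topology, essentially by splitting the $\limsup$ into an entropy term and an integral term.

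First, I would apply Theorem~\ref{thm:main} directly to the sequence $(\mu_n)_n$. By hypothesis $(\mu_n)_n$ converges on cylinders to the sub-probability measure $\lambda\mu\in\M_{\le 1}(\Sigma,\sigma)$, which has total mass $|\lambda\mu|=\lambda$ and, when $\lambda>0$, normalisation $(\lambda\mu)/|\lambda\mu|=\mu$. Theorem~\ref{thm:main} therefore yields
\begin{equation*}
\limsup_{n\to\infty} h_{\mu_n}(\sigma) \le \lambda\, h_\mu(\sigma) + (1-\lambda)\delta_\infty,
\end{equation*}
with the convention, stated in Theorem~\ref{thm:main}, that if $\lambda=0$ the right-hand side is just $\delta_\infty$ (which matches $\lambda h_\mu(\sigma) + (1-\lambda)\delta_\infty = \delta_\infty$ under the usual convention $0\cdot h_\mu(\sigma)=0$).

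Second, since $F\in C_0(\Sigma)$ and $C_0(\Sigma)$ is the space of test functions for the cylinder topology on $\M_{\le 1}(\Sigma,\sigma)$ (by \cite[Lemma 3.19]{iv}, as recalled in Section~\ref{cyl}), the cylinder convergence $\mu_n\to\lambda\mu$ gives
\begin{equation*}
\lim_{n\to\infty}\int F\,d\mu_n = \int F\, d(\lambda\mu) = \lambda\int F\, d\mu.
\end{equation*}
In particular the sequence $\bigl(\int F\,d\mu_n\bigr)_n$ converges, so the standard inequality $\limsup(a_n+b_n)\le \limsup a_n + \lim b_n$ applies.

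Combining these two steps,
\begin{align*}
\limsup_{n\to\infty}\!\left(h_{\mu_n}(\sigma)+\int F\,d\mu_n\right)
&\le \limsup_{n\to\infty} h_{\mu_n}(\sigma) + \lim_{n\to\infty}\int F\,d\mu_n \\
&\le \lambda h_\mu(\sigma) + (1-\lambda)\delta_\infty + \lambda\int F\,d\mu \\
&= \lambda\!\left(h_\mu(\sigma)+\int F\,d\mu\right) + (1-\lambda)\delta_\infty,
\end{align*}
which is the required inequality. Since both inputs are already supplied by earlier results in the paper, there is no real obstacle here; the only point warranting care is the case $\lambda=0$, where one simply observes that the integral term vanishes in the limit (as $F$ is bounded and $\mu_n\to 0$ on cylinders forces $\int F\,d\mu_n\to 0$ via the $C_0(\Sigma)$ test-function property) and Theorem~\ref{thm:main} gives $\limsup h_{\mu_n}(\sigma)\le\delta_\infty$ directly.
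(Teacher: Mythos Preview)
Your proposal is correct and follows essentially the same approach as the paper, which simply states that the result is a direct consequence of Theorem~\ref{thm:main} together with the continuity of $\mu\mapsto\int F\,d\mu$ on $\M_{\le 1}(\Sigma,\sigma)$ for $F\in C_0(\Sigma)$. Your write-up just makes this explicit.
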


For a  continuous, bounded potential $F$ define the \emph{(variational) pressure} of $F$ by 
 \begin{equation*}
 P_{var}(F):=\sup_{\mu\in \M(\Sigma,\sigma)}\left(h_\mu(\sigma)+\int Fd\mu\right).
 \end{equation*}
A measure $\mu$ is an equilibrium state for $F$ if $P_{var}(F)=h_\mu(\sigma)+\int Fd\mu$.  Recall that since $F$ needs not to be of summable variations then the classifications of potentials (see Definition \ref{def:clas}) and the uniqueness of equilibrium states (Theorem \ref{clas}) do not necessarily hold. 

Note that if $F\in C_0(\Sigma)$, then $P_{var}(F)\ge \delta_\infty$. Indeed, let $(\mu_n)_n$ be a sequence of measures in $\M(\Sigma,\sigma)$ converging on cylinders to the zero measure and such that $\lim_{n\to\infty}h_{\mu_n}(\sigma)=\delta_\infty$. Since $F\in C_0(\Sigma)$, then $\lim_{n\to\infty}\int Fd\mu_n=0$. We conclude that $$P_{var}(F)\ge \limsup_{n\to\infty}\left(h_{\mu_n}(\sigma)+\int Fd\mu_n\right)=\delta_\infty.$$

Our next result follows directly from Theorem \ref{ineqC_0} and Theorem \ref{compact}, as Theorem \ref{thm:mme} follows from Theorem \ref{thm:main} and Theorem \ref{compact}. 

\begin{theorem}\label{thm:sta} Let $(\Sigma,\sigma)$  be a transitive CMS with finite entropy and $F\in C_0(\Sigma)$. Assume that $P_{var}(F)> \delta_\infty$. Then there exists an equilibrium state for $F$. Moreover, if $(\mu_n)_n$ is a sequence in $\M(\Sigma,\sigma)$ such that $$\lim_{n\to\infty}\left(h_{\mu_n}(\sigma)+\int Fd\mu_n\right)=P_{var}(F),$$
then every limiting measure of $(\mu_n)_n$ is an equilibrium state of $F$.  
\end{theorem}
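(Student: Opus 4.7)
The plan is to mirror exactly the argument used to prove Theorem~\ref{thm:mme}, with Theorem~\ref{ineqC_0} playing the role that Theorem~\ref{thm:main} played there. Observe first that the finite-entropy hypothesis plus $P_{var}(F)>\delta_{\infty}$ put us in the setting where Theorem~\ref{compact} applies, so $\M_{\le 1}(\Sigma,\sigma)$ endowed with the cylinder topology is a compact metrisable space. Therefore, starting from any sequence $(\mu_n)_n$ in $\M(\Sigma,\sigma)$ with
\[
\lim_{n\to\infty}\Bigl(h_{\mu_n}(\sigma)+\int F\, d\mu_n\Bigr)=P_{var}(F),
\]
I would pass to a subsequence (still denoted $(\mu_n)_n$) that converges on cylinders to some $\nu\in\M_{\le 1}(\Sigma,\sigma)$. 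Write $\nu=\lambda\mu$ with $\lambda=|\nu|\in[0,1]$, and $\mu\in\M(\Sigma,\sigma)$ whenever $\lambda>0$.

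Next, I would apply Theorem~\ref{ineqC_0} to the subsequence. This gives, in the case $\lambda>0$,
\[
P_{var}(F)=\limsup_{n\to\infty}\Bigl(h_{\mu_n}(\sigma)+\int F\, d\mu_n\Bigr)\le \lambda\Bigl(h_{\mu}(\sigma)+\int F\,d\mu\Bigr)+(1-\lambda)\delta_{\infty},
\]
and in the case $\lambda=0$ it collapses to $P_{var}(F)\le \delta_{\infty}$. The latter contradicts our hypothesis, ruling out total escape of mass. In the former case, since $h_{\mu}(\sigma)+\int F\,d\mu\le P_{var}(F)$ by definition, the right-hand side is a convex combination of two quantities, at most one of which equals $P_{var}(F)$ (because $\delta_{\infty}<P_{var}(F)$). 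The only way the inequality can hold is therefore $\lambda=1$ and $h_{\mu}(\sigma)+\int F\,d\mu\ge P_{var}(F)$, so $\mu$ is an equilibrium state for $F$.

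For the existence part, one simply picks any maximising sequence — such a sequence exists by the definition of $P_{var}(F)$ as a supremum — and applies the argument above. For the ``moreover'' part, the same reasoning applies to every cylinder-convergent subsequence of an arbitrary maximising sequence, yielding that every such limit is a probability measure (no mass is lost) and is an equilibrium state. I do not expect any real obstacle: all the substantive work has been absorbed into Theorem~\ref{ineqC_0} (which delivers upper semicontinuity of the free energy under escape of mass, with the correct defect $\delta_{\infty}$) and Theorem~\ref{compact} (which supplies a convergent subsequence). The only point requiring a small amount of care is disposing of the $\lambda=0$ case separately, since the decomposition $\nu=\lambda\mu$ with $\mu\in\M(\Sigma,\sigma)$ is ambiguous there; as shown above, that case is immediately excluded by the strict inequality $P_{var}(F)>\delta_{\infty}$.
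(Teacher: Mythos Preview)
Your proposal is correct and follows exactly the approach the paper indicates: the paper does not spell out a proof but simply states that the result ``follows directly from Theorem~\ref{ineqC_0} and Theorem~\ref{compact}, as Theorem~\ref{thm:mme} follows from Theorem~\ref{thm:main} and Theorem~\ref{compact}.'' Your argument is precisely that analogy carried out in detail, including the correct handling of the $\lambda=0$ case and the observation that (unlike in Theorem~\ref{thm:mme}) one cannot conclude convergence of the full sequence since uniqueness of the equilibrium state is not guaranteed for general $F\in C_0(\Sigma)$.
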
 

In Theorem \ref{thm:sta}, if  we further assume that $F$ has summable variations, then the sequence $(\mu_n)_n$ converges in the weak$^*$ topology to the unique equilibrium state of $F$. For the description of the pressure map $t\mapsto P_{var}(tF)$ we refer the reader to \cite[Theorem 5.7]{rv}.

\subsection{Entropy and escape of mass}

In this subsection we show  that 
for a SPR CMS $(\Sigma,\sigma)$ it is possible to bound the escape of mass of sequences of measures with sufficiently large entropy. In the setting of homogenous flows an analogous result was proven in  \cite[Corollary of Theorem A]{ekp}.

\begin{theorem} \label{thm:em} Let $(\Sigma,\sigma)$ be a SPR CMS. Let $(\mu_n)_n$ be a sequence in $\M(\Sigma,\sigma)$ such that $h_{\mu_n}(\sigma)\ge c$, for every $n\in\N$, and $c\in (\delta_\infty,h_{top}(\sigma))$. Then every limiting measure $\mu$ of $(\mu_n)_n$ with respect to the cylinder topology satisfies
\begin{equation*}
\mu(\Sigma)\ge \frac{c-\delta_\infty}{h_{top}(\sigma)-\delta_\infty}.
\end{equation*}
\end{theorem}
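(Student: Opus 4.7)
The statement follows quite directly from Theorem \ref{thm:main}, so my plan is essentially to set it up and rearrange. Let $\mu$ be a limiting measure of $(\mu_n)_n$ with respect to the cylinder topology, and pass to a subsequence (still denoted $(\mu_n)_n$) converging on cylinders to $\mu$. Since $(\Sigma,\sigma)$ is SPR, Proposition \ref{prechar} gives $\delta_\infty < h_{top}(\sigma)$, and in particular $(\Sigma,\sigma)$ has finite topological entropy, so the hypotheses of Theorem \ref{thm:main} are satisfied.

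First I would apply Theorem \ref{thm:main} to the sequence $(\mu_n)_n$ to obtain
\begin{equation*}
c \le \liminf_{n\to\infty} h_{\mu_n}(\sigma) \le \limsup_{n\to\infty} h_{\mu_n}(\sigma) \le |\mu|\, h_{\mu/|\mu|}(\sigma) + (1-|\mu|)\delta_\infty,
\end{equation*}
where the rightmost expression is understood as $\delta_\infty$ when $|\mu|=0$. The hypothesis $h_{\mu_n}(\sigma)\ge c$ for all $n$ ensures the leftmost inequality.

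Next I would bound $h_{\mu/|\mu|}(\sigma) \le h_{top}(\sigma)$ via the variational principle (Theorem \ref{thm:vp}, applied to $\phi \equiv 0$) whenever $|\mu|>0$. Substituting gives
\begin{equation*}
c \le |\mu|\, h_{top}(\sigma) + (1-|\mu|)\delta_\infty = \delta_\infty + |\mu|\bigl(h_{top}(\sigma) - \delta_\infty\bigr).
\end{equation*}
Since $h_{top}(\sigma) - \delta_\infty > 0$ by the SPR assumption, rearranging yields
\begin{equation*}
|\mu| \ge \frac{c - \delta_\infty}{h_{top}(\sigma) - \delta_\infty},
\end{equation*}
which is the desired inequality. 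Note that the case $|\mu|=0$ is automatically excluded because $c > \delta_\infty$ would contradict the displayed upper bound (which would reduce to $c \le \delta_\infty$).

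There is no real obstacle here; the work has been done in proving Theorem \ref{thm:main}. The only thing worth being careful about is the degenerate case of total escape ($|\mu|=0$), which is ruled out by $c>\delta_\infty$, and the fact that one should apply the theorem along a convergent subsequence to ensure the hypothesis of cylinder convergence is met. The result confirms the heuristic that in SPR systems, entropy cannot escape to infinity without a corresponding loss of mass being bounded by the gap $h_{top}(\sigma) - \delta_\infty$.
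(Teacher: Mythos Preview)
Your proof is correct and follows essentially the same route as the paper: apply Theorem \ref{thm:main} to the (sub)sequence, bound $h_{\mu/|\mu|}(\sigma)$ above by $h_{top}(\sigma)$, and rearrange using $h_{top}(\sigma)-\delta_\infty>0$. The paper's version is terser (it does not single out the $|\mu|=0$ case or the passage to a subsequence), but the argument is the same.
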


\begin{proof}
From Theorem \ref{thm:main}  we have that
\begin{eqnarray*}
c \leq \limsup_{n \to \infty} h_{\mu_n}(\sigma) \leq \mu(\Sigma) h_{\mu / |\mu|}(\sigma) + (1 - \mu(\Sigma)) \delta_{\infty} \leq 
\mu(\Sigma) (h_{top}(\sigma) - \delta_{\infty}) + \delta_{\infty}.
\end{eqnarray*}
The result then follows.
\end{proof}

\end{document}